\newcommand{\be}{\begin{eqnarray}}
\newcommand{\ben}{\begin{eqnarray*}}
\newcommand{\en}{\end{eqnarray}}
\newcommand{\enn}{\end{eqnarray*}}
\newtheorem{theorem}{Theorem}[section]
\newtheorem{lemma}{Lemma}[section]
\newtheorem{prp}[theorem]{Proposition}
\newtheorem{thm}[theorem]{Theorem}
\newtheorem{dfn}{Definition}[section]
\begin{document}
\renewcommand{\theequation}{\arabic{section}.\arabic{equation}}
\begin{titlepage}
\title{\bf Large deviations for nematic liquid crystals driven by pure jump noise
}
\author{  Rangrang Zhang$^{1,}$\thanks{Corresponding author.}\ \ \ Guoli Zhou$^{2}$\\
{\small $^1$ Department of  Mathematics,
Beijing Institute of Technology, Beijing, 100081, P R China}\\
{\small $^2$ School of Statistics and Mathematics, Chongqing University, Chongqing, 400044, P R China}\\
(  {\sf rrzhang@amss.ac.cn}\ \ {\sf zhouguoli736@126.com})}
\date{}
\end{titlepage}
\maketitle

\noindent\textbf{Abstract}:
In this paper, we establish a large deviation principle for a stochastic evolution equation which describes the system governing the nematic liquid crystals driven by pure jump noise. The proof is based on the weak convergence approach.

\noindent \textbf{AMS Subject Classification}:\ \ Primary 60F10 Secondary 60H15.

\noindent\textbf{Keywords}: large deviations; weak convergence method; nematic liquid crystals.

\section{Introduction}
As we all know, the obvious states of matter are the solid, the liquid and the gaseous state. The liquid crystal is an intermediate state of a matter, in between the liquid and the crystalline solid, i.e. it must possess some typical properties of a liquid as well as  some crystalline properties. The nematic liquid crystal phase is characterized by long-range orientational order, i.e. the molecules have no positional order but tend to align along a preferred direction. Much of the interesting phenomenology of liquid crystals involves the geometry and dynamics of the preferred axis, which is defined by a vector $\theta$. This vector is called a director. Since the sign as well as the magnitude of the director has no physical significance, it is taken to be unity.

The concrete description of the physical relevance of liquid crystals can be referred to Chandrasekhar \cite{Ch}, Warner and Terentjev \cite{W-T}, Gennes and Prost \cite{G-P} and the references therein.
In the 1960's, Ericksen \cite{Er} and Leslie \cite{Le} demonstrated the hydrodynamic theory of liquid crystals. Moreover, they expanded the continuum theory which has been widely used by most researchers to design the dynamics of the nematic liquid crystals. Inspired by this theory, the most fundamental form of dynamical system representing the motion of nematic liquid crystals has been procured by Lin and Liu \cite{L-L}.

The addition of a stochastic noise to this model is fully natural as it represents external random perturbations or a lack of knowledge of certain physical parameters. More precisely, we
consider the following nematic liquid crystals driven by a pure jump noise in $\mathbb{O}_T:=(0,T]\times \mathbb{O}$, $\mathbb{O}\subset \mathbb{R}^d$, $d=$2 or 3,
\begin{eqnarray}\label{equ-0}
\left\{
  \begin{array}{ll}
   du+[(u\cdot \nabla)u-\mu \Delta u+\nabla p]dt+ \nabla \cdot (\nabla \theta\odot \nabla \theta)dt=\int_{\mathbb{X}}G(t,u(t-),v)\tilde{N}(dt,dv), &\\
    \nabla\cdot u=0, &\\
    d\theta+[(u\cdot \nabla)\theta-\gamma \Delta \theta(t)]dt+f(\theta(t))dt=0, &
  \end{array}
\right.
\end{eqnarray}
where $\tilde{N}$ is the compensated time homogeneous Poisson random measure and $G, f$ are measurable functions will be specified later. There are several recent works about the existence and uniqueness of pathwise weak solution of (\ref{equ-0}), i.e. strong in the probabilistic sense and weak in the PDE sense. In \cite{Au},
Brze\'{z}niak, Hausenblas and Razafimandimby studied the Ginzburg-Landau approximation of the nematic liquid crystals under the influence of fluctuating external forces. In that paper, they proved the existence and uniqueness of local maximal solution in both 2D and 3D cases using fixed point argument. Also they have proved the existence of global strong solution to the problem in the 2D case.
Brze\'{z}niak, Manna and Panda \cite{B-M-P} studied the nematic liquid crystals driven by pure jump noise in both 2D and 3D cases. They proved the global well-posedness of strong solution in the 2D case and established the existence of weak martingale solution of this model in the 3D case, respectively.

 The purpose of this paper is to prove a large deviations for the 2D nematic liquid crystals driven by a pure jump noise, which provides the exponential decay  of small probabilities associated  with
the corresponding stochastic dynamical systems with small noise.
The proof of the large deviations will be based on the weak convergence approach introduced in Budhiraja, Chen and Dupuis \cite{B-C-D} and Budhiraja, Dupuis and Maroulas \cite{B-D-M}. As an important part of the proof, we need to obtain global well-posedness of the so called skeleton equation.
For the uniqueness, we adopt the method introduced in \cite{B-M-P}.  For the existence, we first apply the Faedo-Galerkin approximation method to construct a sequence of  approximating equations as in \cite{B-M-P}. We then show that the family of the solutions of the approximating equations is compact in an appropriate space and that any limit of the approximating solutions gives rise to a solution of the skeleton equation.
To complete the proof of the large deviation principle, we also need to study the weak convergence of the  perturbations of the
system (\ref{equ-0}) in the random directions of the Cameron-Martin space of the driving Brownian motions.

This paper is organized as follows. The mathematical formulation of nematic liquid crystals flows is in Section 2. In Section 3, we recall a general criterion obtained in Budhiraja, Dupuis and Maroulas \cite{B-D-M} and state the main result. Section 4 is devoted to the study of the skeleton equations. The large deviations is proved in Section 5.

\section{The mathematical framework}

Let $T>0$ and $\mathbb{O}\subset \mathbb{R}^2$ be a bounded domain with smooth boundary $\partial \mathbb{O}$. Consider the following two-dimensional stochastic evolution equations in $\mathbb{O}_T:=(0,T]\times \mathbb{O}$ given by
\begin{eqnarray}\label{eq-1}
\left\{
  \begin{array}{ll}
   du+[(u\cdot \nabla)u-\mu \Delta u+\nabla p]dt+ \nabla \cdot (\nabla \theta\odot \nabla \theta)dt=\int_{\mathbb{X}}G(t,u(t-),v)\tilde{N}(dt,dv), &\\
    \nabla\cdot u=0, &\\
    d\theta+[(u\cdot \nabla)\theta-\gamma \Delta \theta(t)]dt+f(\theta(t))dt=0, &
  \end{array}
\right.
\end{eqnarray}
where the vector field $u=u(x,t)$ denotes the velocity of the fluid, $\theta=\theta(x,t)$ is the director field, $p$ denoting the scalar pressure. $\tilde{N}$ is the compensated time homogeneous Poisson random measure on a certain locally compact Polish space $(\mathbb{X}, \mathcal{B}(\mathbb{X}))$. $G$ and $f$ are measurable functions, which will be specified in subsection \ref{s-2}. The symbol $\nabla \theta\odot \nabla \theta$ is the $2\times 2$-matrix with the entries
\begin{eqnarray*}
[\nabla \theta\odot \nabla \theta]_{i,j}=\sum^2_{k=1}\partial_{x_i}\theta^{(k)}\partial_{x_j}\theta^{(k)},\quad i,j=1,2.
\end{eqnarray*}
Without loss of generality, we assume that
\[
\mu=\gamma=1.
\]
The boundary and initial conditions for (\ref{eq-1}) are
\begin{eqnarray*}\notag
&u=0\ {\rm{and}}\ \frac{\partial \theta}{\partial n}=0\quad {\rm{on}}\  \partial \mathbb{O},&\\
&(u(0),\theta(0))=(u_0,\theta_0),&
\end{eqnarray*}
where $n$ is the outward  unit normal vector at each point $x$ of $\mathbb{O}$.

\subsection{Functional spaces}
Denote by $\mathbb{N}, \mathbb{R}, \mathbb{R}^+, \mathbb{R}^d$ the set of positive integers, real numbers, positive real numbers and $d-$dimensional real vectors, respectively. For a topology space $\mathcal{E}$, denote the corresponding Borel $\sigma-$field by $\mathcal{B}(\mathcal{E})$. For a metric space $\mathbb{X}$, $C([0,T];\mathbb{X})$ stands for the space of continuous functions from $[0,T]$ into $\mathbb{X}$ and $\mathcal{D}([0,T];\mathbb{X})$ represents the space of right continuous functions with left limits from $[0,T]$ into $\mathbb{X}$. For a metric space $\mathbb{Y}$, denote by $M_b(\mathbb{Y}), C_b(\mathbb{Y})$ the space of real valued bounded $\mathcal{\mathbb{Y}}/ \mathcal{\mathbb{R}}-$measurable maps and real valued bounded continuous functions, respectively.

Now, we follow closely the framework of \cite{B-M-P}.
For any $p\in [1,\infty)$ and $k\in \mathbb{N}$, let $(L^p(\mathbb{O}),|\cdot|_{L^p})$ and $(W^{k,p}(\mathbb{O}),\|\cdot\|_{W^{k,p}})$ be Lebesgue and Sobolev space of $\mathbb{R}^2$-valued functions, respectively. For $p=2$, set $W^{k,2}=H^k$. For instance, $H^1(\mathbb{O})$ is the Sobolev space of all $u\in L^2(\mathbb{O})$, for which there exist weak derivatives $\frac{\partial u}{\partial x_i}\in L^2(\mathbb{O}), i=1,2.$ It's well-known that $H^1(\mathbb{O})$ is a Hilbert space with the scalar product given by
\[
(u,v)_{H^1}:=(u,v)_{L^2}+(\nabla u, \nabla v)_{L^2}, \quad u,v \in H^1(\mathbb{O}).
\]
Now, define working spaces for the system (\ref{eq-1}) as
 \begin{eqnarray}\notag
 \mathcal{V}:=\left\{u\in C^{\infty}_c(\mathbb{O});\ \nabla\cdot u=0\right\},
 \end{eqnarray}
$\mathbb{H}$:= the closure of $ \mathcal{V}$ in $L^2(\mathbb{O})$ and
$\mathbb{V}$:= the closure of $ \mathcal{V}$ in $H^1(\mathbb{O})$.

In the space  $\mathbb{H}$, we equip it with the scalar product and the norm inherited from $L^2(\mathbb{O})$ and denote them by $(\cdot,\cdot)_{\mathbb{H}}$ and $|\cdot|_{\mathbb{H}}$, respectively, i.e.,
\[
(u,v)_{\mathbb{H}}:=(u,v)_{L^2}, \quad |u|_{\mathbb{H}}:=|u|_{L^2}:=|u|, \quad u,v\in \mathbb{H}.
\]
In the space $\mathbb{V}$, we equip it with the scalar product inherited from the Sobolev space $H^1(\mathbb{O})$, i.e.,
\[
(u,v)_{\mathbb{V}}:=(u,v)_{L^2}+((u,v)),
\]
where
\begin{eqnarray}\label{eq-6}
((u,v)):=(\nabla u,\nabla v)_{L^2}=\int_{\mathbb{O}}\frac{\partial u}{\partial x_1}\cdot\frac{\partial v}{\partial x_1}dx+\int_{\mathbb{O}}\frac{\partial u}{\partial x_2}\cdot \frac{\partial v}{\partial x_2}dx, \quad u,v\in \mathbb{V}.
\end{eqnarray}
The norm of $\mathbb{V}$ is defined as
\[
\|u\|^2_{\mathbb{V}}:=|u|^2_{\mathbb{H}}+\|u\|^2,
\]
where $\|u\|^2:=|\nabla u|^2$.

As we are working on a bounded domain, it's clear that
\[
\mathbb{V}\hookrightarrow \mathbb{H}\cong \mathbb{H}'\hookrightarrow \mathbb{V}',
\]
where the embedding is compact continuous.
Also, we have the embedding
\[
H^2\hookrightarrow H^1\hookrightarrow L^2\cong L^2 \hookrightarrow (H^1)'\hookrightarrow (H^2)'.
\]
\subsection{Some functionals}
Set
\begin{eqnarray}\label{eq-5}
A_1u:=((u, \cdot)), \quad u\in \mathbb{V},
\end{eqnarray}
where $((\cdot,\cdot))$ is defined by (\ref{eq-6}). If $u\in \mathbb{V}$, then $A_1u\in \mathbb{V}'$. By the Cauchy-Schwarz inequality, we deduce that
\begin{eqnarray}\label{eq-7}
|A_1u|_{\mathbb{V}'}\leq \|u\|, \quad u\in \mathbb{V}.
\end{eqnarray}
It's well-known that $A_1$ is a positive self-adjoint operator. Let $\{\varrho_i\}^{\infty}_{i=1}$ be the orthonormal basis of $\mathbb{H}$ composed of eigenfunctions of the Stokes operator $A_1$ with corresponding eigenvalues $0\leq \lambda_1\leq \lambda_2\leq\cdot\cdot\cdot\rightarrow \infty$ $ (A_1\varrho_i=\lambda_i \varrho_i)$. We will  use fractional powers of the operator $A_1$, denoted by $A^{\alpha}_1$, as well as their domains $D(A^{\alpha}_1)$ for $\alpha\in \mathbb{R}$. Note that
\[
D(A^{\alpha}_1)=\{u=\sum^{\infty}_{i=1}u_i\cdot \varrho_i:\sum^{\infty}_{i=1}\lambda_i^{2\alpha}u^2_i<\infty\}.
\]
We may endow $D(A^{\alpha}_1)$ with the inner product
\[
(u,v)_{D(A^{\alpha}_1)}=(A^{\alpha}_1 u,A^{\alpha}_1 v)_{\mathbb{H}}.
\]
Hence, $(D(A^{\alpha}_1), (\cdot,\cdot)_{D(A^{\alpha}_1)})$ is a Hilbert space and $\{\lambda^{-\alpha}_i \varrho_i\}_{i\in\mathbb{N}}$ is a complete orthonormal system of $D(A^{\alpha}_1)$. By Riesz representative theorem, $D(A^{-\alpha}_1)$ is the dual space of $D(A^{\alpha}_1)$.

Define a self-adjoint operator $A_2: H^1\rightarrow (H^1)'$ by
\begin{eqnarray}\label{eq-8}
\langle A_2\theta, w\rangle:=((\theta, w)):=\int_{\mathbb{O}}\nabla \theta\cdot \nabla wdx, \quad \theta, w\in H^1.
\end{eqnarray}
Let $\{\varsigma_i\}^{\infty}_{i=1}$ be the orthonormal basis of $L^2$ composed of eigenfunctions of the Stokes operator $A_2$.
 We  have
\begin{eqnarray}\label{eq-9}
\|A_2 \theta\|_{(H^1)'}\leq \|\theta\|_{H^1}.
\end{eqnarray}

Consider the following trilinear form (see \cite{Temam-1})
\[
b(u,v,w)=\sum^2_{i,j=1}\int_{\mathbb{O}}u^{(i)}\partial_{x_i}v^{(j)}w^{(j)}dx, \quad u\in L^p,\ v\in W^{1,q},\ w\in L^r,
\]
where $p,q,r\in [1,\infty]$ satisfying
\begin{eqnarray}\label{eq-2}
\frac{1}{p}+\frac{1}{q}+\frac{1}{r}\leq 1.
\end{eqnarray}
Referring to \cite{A}, by the Sobolev embedding Theorem and H\"{o}lder inequality, we obtain
\begin{eqnarray}
|b(u,v,w)|\leq C\|u\|_{\mathbb{V}}\|v\|_{\mathbb{V}}\|w\|_{\mathbb{V}},\quad u,v,w\in \mathbb{V},
\end{eqnarray}
for some positive constant $C$. Thus, $b$ is a continuous on $\mathbb{V}$.

Now, define a bilinear map $B: V\times V\rightarrow V'$ by
\begin{eqnarray*}
\langle B(u,v),w\rangle:=b(u,v,w)=\sum^2_{i,j=1}\int_{\mathbb{O}}u^{(i)}\partial_{x_i}v^{(j)}w^{(j)}dx.
\end{eqnarray*}
Then, referring to \cite{Temam-1}, it gives
\begin{lemma}\label{lem-1}
For any $u\in V$, $v\in V$, $w\in V$,
\begin{description}
  \item[(1)] $\langle B(u,v),w\rangle=-\langle B(u,w),v\rangle,\quad \langle B(u,v),v\rangle=b(u,v,v)=0.$
  \item[(2)] $\|B(u,v)\|_{\mathbb{V}'}\leq C|u|^{\frac{1}{2}}\|u\|^{\frac{1}{2}}|v|^{\frac{1}{2}}\|v\|^{\frac{1}{2}},$ \quad {\rm{for \ some\ positive
      \ constnat\ C}}.
\end{description}
\end{lemma}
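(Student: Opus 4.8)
The plan is to prove both assertions by the classical route of Temam: first establish everything for smooth divergence-free test fields, then pass to general elements of $\mathbb{V}$ by density, using the continuity estimate $|b(u,v,w)|\le C\|u\|_{\mathbb{V}}\|v\|_{\mathbb{V}}\|w\|_{\mathbb{V}}$ recorded just above the lemma.

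\textbf{Part (1).} I would first take $u,v,w\in\mathcal{V}$, so that all integrations by parts are legitimate. Writing $b(u,v,w)=\sum_{i,j=1}^{2}\int_{\mathbb{O}}u^{(i)}\partial_{x_i}v^{(j)}w^{(j)}\,dx$ and integrating by parts in the $x_i$ variable (there is no boundary contribution since $u\in C_c^{\infty}(\mathbb{O})$), one gets
\[
b(u,v,w)=-\sum_{i,j}\int_{\mathbb{O}}(\partial_{x_i}u^{(i)})\,v^{(j)}w^{(j)}\,dx-\sum_{i,j}\int_{\mathbb{O}}u^{(i)}v^{(j)}\partial_{x_i}w^{(j)}\,dx.
\]
The first sum vanishes because $\sum_{i}\partial_{x_i}u^{(i)}=\nabla\cdot u=0$, and the second equals $-b(u,w,v)$; hence $b(u,v,w)=-b(u,w,v)$, i.e. $\langle B(u,v),w\rangle=-\langle B(u,w),v\rangle$. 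Taking $w=v$ forces $2b(u,v,v)=0$, so $b(u,v,v)=0$. Since both sides of the identity are continuous trilinear forms on $\mathbb{V}$ and $\mathcal{V}$ is dense in $\mathbb{V}$, both the antisymmetry and $\langle B(u,v),v\rangle=0$ extend to all $u,v,w\in\mathbb{V}$.

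\textbf{Part (2).} By definition $\|B(u,v)\|_{\mathbb{V}'}=\sup\{\,|\langle B(u,v),w\rangle|:\ w\in\mathbb{V},\ \|w\|_{\mathbb{V}}\le 1\,\}$, so it suffices to estimate $|b(u,v,w)|$ uniformly over such $w$. Using the antisymmetry from Part (1), $\langle B(u,v),w\rangle=-b(u,w,v)$, and I would bound $|b(u,w,v)|$ by Hölder's inequality with exponents $(4,2,4)$, which gives $|b(u,w,v)|\le |u|_{L^4}\,|\nabla w|_{L^2}\,|v|_{L^4}$. Applying the two-dimensional Ladyzhenskaya (Gagliardo--Nirenberg) inequality $|\phi|_{L^4}\le C|\phi|^{1/2}\|\phi\|^{1/2}$ for $\phi\in\mathbb{V}$ to both $u$ and $v$, and noting $|\nabla w|_{L^2}=\|w\|\le\|w\|_{\mathbb{V}}$, one obtains $|b(u,w,v)|\le C|u|^{1/2}\|u\|^{1/2}|v|^{1/2}\|v\|^{1/2}\|w\|_{\mathbb{V}}$. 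Taking the supremum over the unit ball of $\mathbb{V}$ yields the claimed bound $\|B(u,v)\|_{\mathbb{V}'}\le C|u|^{1/2}\|u\|^{1/2}|v|^{1/2}\|v\|^{1/2}$.

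\textbf{Main obstacle.} The computations here are entirely classical; the only points requiring genuine care are, in Part (1), checking that the boundary terms in the integration by parts really disappear for elements of $\mathbb{V}$ (guaranteed by the no-slip condition encoded in the definition of $\mathbb{V}$ as the $H^1$-closure of $\mathcal{V}$) and that $b$ is continuous in the $\mathbb{V}$-topology so that the identities survive the passage to the limit; and, in Part (2), invoking the Ladyzhenskaya inequality with the correct exponents, which is exactly where the dimension $d=2$ is used. No deeper difficulty is involved.
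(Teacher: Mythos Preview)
Your proof is correct and follows the classical route of Temam \cite{Temam-1}, which is exactly what the paper does: rather than giving its own argument, the paper simply refers to Temam for this lemma. Your integration-by-parts plus density argument for Part (1) and the Hölder/Ladyzhenskaya estimate for Part (2) are precisely the standard proof found there, so nothing needs to be changed.
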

Based on Lemma \ref{lem-1}, the operator $B$ can be uniquely extended to a bounded linear operator
\[
B:\mathbb{H}\times \mathbb{H}\rightarrow \mathbb{V}',
\]
 and it satisfies the following estimate
\begin{eqnarray} \label{eq-3}
 \|B(u,v)\|_{\mathbb{V}'}\leq C|u||v|.
\end{eqnarray}
For the convenience for written, denote $B(u):=B(u,u)$. Note that $B: \mathbb{V}\rightarrow \mathbb{V}' $ is locally Lipschitz continuous.

Now, define a bilinear mapping $\tilde{B}: H^1\times H^1\rightarrow (H^1)'$ as
\[
\langle \tilde{B}(u,v), w\rangle=b(u,v,w)\quad u,v,w\in H^1.
\]
We still denote by $\tilde{B}(\cdot,\cdot)$ the restriction of $\tilde{B}(\cdot,\cdot)$  to $\mathbb{V}\times H^2$, which map continuously from $\mathbb{V}\times H^2$ into $L^2$. According to \cite{Temam-1}, we have
\begin{lemma}\label{lem-2}
For $u\in \mathbb{V}$, $\theta\in H^2$, we have
\begin{description}
  \item[(1)] $\langle \tilde{B}(u, \theta), \theta\rangle=b(u, \theta,\theta)=0.$
  \item[(2)] $|\tilde{B}(u, \theta)|\leq C|u|^{\frac{1}{2}}\|u\|^{\frac{1}{2}}\|\theta\|^{\frac{1}{2}}|\Delta \theta|^{\frac{1}{2}}$,\quad {\rm{for\ some\ positive\ constant}} $C$.
\end{description}
\end{lemma}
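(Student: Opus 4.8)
The plan is to reduce both assertions to scalar computations combined with two standard two-dimensional interpolation inequalities. Throughout I write $\tilde{B}(u,\theta)=(u\cdot\nabla)\theta$, whose $j$-th component is $[\tilde{B}(u,\theta)]^{(j)}=\sum_{i=1}^{2}u^{(i)}\partial_{x_i}\theta^{(j)}=u\cdot\nabla\theta^{(j)}$, $j=1,2$. For (2), I would first check that $\tilde{B}(u,\theta)\in L^2$ and estimate its norm directly. By H\"older's inequality,
\[
|\tilde{B}(u,\theta)|^2=\sum_{j=1}^{2}\int_{\mathbb{O}}\bigl|u\cdot\nabla\theta^{(j)}\bigr|^2\,dx\le |u|_{L^4}^2\sum_{j=1}^{2}|\nabla\theta^{(j)}|_{L^4}^2\le C\,|u|_{L^4}^2\,|\nabla\theta|_{L^4}^2 .
\]
Next I would invoke the two-dimensional Ladyzhenskaya (Gagliardo--Nirenberg) inequality $|u|_{L^4}\le C|u|^{1/2}\|u\|^{1/2}$, valid for $u\in\mathbb{V}$ since such $u$ vanishes on $\partial\mathbb{O}$, and apply the same interpolation inequality to the vector field $\nabla\theta\in H^1$ to get $|\nabla\theta|_{L^4}\le C|\nabla\theta|^{1/2}\|\nabla\theta\|_{H^1}^{1/2}$. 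Finally, because $\mathbb{O}$ is bounded with smooth boundary and $\partial\theta/\partial n=0$ on $\partial\mathbb{O}$, the $H^2$-regularity estimate for the Neumann Laplacian (together with a Poincar\'e inequality on the orthogonal complement of the constants, which does not affect $\Delta\theta$) yields $\|\nabla\theta\|_{H^1}\le C|\Delta\theta|$, hence $|\nabla\theta|_{L^4}\le C\|\theta\|^{1/2}|\Delta\theta|^{1/2}$. Combining the three bounds gives (2), and the first estimate above simultaneously shows that the restriction of $\tilde{B}$ to $\mathbb{V}\times H^2$ maps continuously into $L^2$.

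For (1), I would establish the identity first for $u\in\mathcal{V}$ and smooth $\theta$. Using $u^{(i)}\partial_{x_i}\theta^{(j)}\theta^{(j)}=\tfrac12\,u^{(i)}\partial_{x_i}|\theta^{(j)}|^2$ and integrating by parts,
\[
b(u,\theta,\theta)=\frac12\sum_{j=1}^{2}\int_{\mathbb{O}}(u\cdot\nabla)|\theta^{(j)}|^2\,dx=-\frac12\sum_{j=1}^{2}\int_{\mathbb{O}}(\nabla\cdot u)|\theta^{(j)}|^2\,dx+\frac12\sum_{j=1}^{2}\int_{\partial\mathbb{O}}(u\cdot n)|\theta^{(j)}|^2\,dS=0,
\]
the first term vanishing because $\nabla\cdot u=0$, the boundary term because $u$ has compact support in $\mathbb{O}$. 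The general case then follows by density: choose $u_k\in\mathcal{V}$ with $u_k\to u$ in $\mathbb{V}$ and smooth $\theta_k\to\theta$ in $H^2$; writing $b(u_k,\theta_k,\theta_k)=(\tilde{B}(u_k,\theta_k),\theta_k)$ and using the continuity of $\tilde{B}:\mathbb{V}\times H^2\to L^2$ from part (2) together with $H^2\hookrightarrow L^2$, the left-hand side converges to $(\tilde{B}(u,\theta),\theta)=b(u,\theta,\theta)$, which is therefore $0$.

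The only genuinely delicate step is the passage from $\|\nabla\theta\|_{H^1}$ --- equivalently the full Hessian norm $\|D^2\theta\|_{L^2}$ --- to $|\Delta\theta|$ in part (2); this rests on the $H^2$-regularity theory for the Neumann problem on the smooth bounded domain $\mathbb{O}$ and on a Poincar\'e-type inequality to absorb the lower-order contribution. Both are classical and are precisely among the estimates collected in \cite{Temam-1}. Everything else is a routine combination of H\"older's inequality, Ladyzhenskaya's inequality, and a density argument, so I do not anticipate further obstacles.
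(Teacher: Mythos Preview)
Your argument is correct and follows the standard route: H\"older's inequality combined with the two-dimensional Ladyzhenskaya interpolation $|v|_{L^4}\le C|v|^{1/2}\|v\|_{H^1}^{1/2}$ applied to $u$ and to $\nabla\theta$, together with elliptic regularity for the Neumann Laplacian to pass from $\|\nabla\theta\|_{H^1}$ to $|\Delta\theta|$. The paper itself does not supply a proof of this lemma; it simply cites \cite{Temam-1}, where exactly this combination of estimates is recorded. So your proof is essentially the argument the citation points to, written out in full.

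One small remark: as you implicitly noticed, the bound in (2) cannot hold for an arbitrary $\theta\in H^2(\mathbb{O})$ (a nonconstant harmonic $\theta$ would have $|\Delta\theta|=0$ but $\tilde{B}(u,\theta)\ne 0$). The estimate is really for $\theta$ in the domain of the Neumann Laplacian $A_2$, which is how it is used throughout the paper; your invocation of $\partial\theta/\partial n=0$ and the Poincar\'e argument on the orthogonal complement of constants is exactly what makes $\|\nabla\theta\|_{H^1}\le C|\Delta\theta|$ legitimate. You might state this hypothesis explicitly rather than leaving it in the commentary.
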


Consider the trilinear form defined by
\[
m(\theta_1, \theta_2, u)=-\sum^2_{i,j,k=1}\int_{\mathbb{O}}\partial_{x_i} \theta^{(k)}_1\partial_{x_j} \theta^{(k)}_2\partial_{x_j} u^{(i)} dx
\]
for any $\theta_1\in W^{1,p}, \theta_2\in W^{1,q}$ and $u\in W^{1,r}$ with $p,q,r\in (1,\infty)$ satisfying  (\ref{eq-2}).

Define a bilinear operator $M: H^2\times H^2\rightarrow \mathbb{V}'$ such that for any $\theta_1, \theta_2\in H^2$
\[
\langle M(\theta_1, \theta_2), u \rangle=m(\theta_1, \theta_2,u), \quad u\in \mathbb{V}.
\]
Then, by H\"{o}lder inequality and Sobolev interpolation inequality, we have
\begin{eqnarray} \label{eq-4}
 \|M(\theta_1, \theta_2)\|_{\mathbb{V}'}\leq C\|\theta_1\|^{\frac{1}{2}}|\Delta\theta_1|^{\frac{1}{2}}\|\theta_2\|^{\frac{1}{2}}|\Delta\theta_2|^{\frac{1}{2}}.
\end{eqnarray}
For simplicity, we denote $M(\theta):=M(\theta,\theta)$.

Collecting all the above functionals, (\ref{eq-1}) can be written as
\begin{eqnarray}\label{equ-1}
\left\{
  \begin{array}{ll}
    du(t)+[A_1 u(t)+B(u(t))+M(\theta(t))]dt=\int_{\mathbb{O}}G(t,u(t),v)\tilde{N}(dt,dv), \\
   d\theta(t)+[A_2 \theta(t)+\tilde{B}(u(t),\theta(t))+f(\theta(t))]dt=0.
  \end{array}
\right.
\end{eqnarray}

\subsection{Hypotheses}\label{s-2}
To obtain the global well-posedness of (\ref{equ-1}), we introduce the following hypotheses stated in \cite{B-M-P}.
\begin{description}
   \item[\textbf{Hypothesis H0}] \begin{description}
                                   \item[(A)] $\tilde{N}$ is a compensated time homogeneous Poisson random measure on a locally compact space $(\mathbb{X},\mathcal{B}(\mathbb{X}))$ over a probability space $(\Omega, \mathcal{F}, \mathbb{P})$ with a $\sigma-$finite intensity measure $\vartheta$.
                                   \item[(B)] Let $G:[0,T]\times \mathbb{H}\times \mathbb{X}\rightarrow \mathbb{H}$ is a measurable function and there exists a constant $L$ such that
                                       \begin{eqnarray}
                                       \int_{\mathbb{X}}|G(t,u_1,v)-G(t,u_2,v)|^2_{\mathbb{H}}\vartheta(dv)\leq L|u_1-u_2|^2,\quad u_1,u_2\in \mathbb{H}, \ t\in [0,T].
                                       \end{eqnarray}
                                       and for each $p\geq 1$, there exists a constant $C_p$ such that
                                        \begin{eqnarray}
                                       \int_{\mathbb{X}}|G(t,u,v)|^p_{\mathbb{H}}\vartheta(dv)\leq C_p(1+|u|^p),\quad u\in \mathbb{H}, \ t\in [0,T].
                                       \end{eqnarray}
                                   \item[(C)] For $N\in \mathbb{N}$, numbers $b_j, j=0,\cdot\cdot\cdot, N$ with $b_j>0$, we define a function $\tilde{f}:[0,\infty)\rightarrow \mathbb{R}$ by
                                       \[
                                       \tilde{f}(r)=\sum^N_{j=0}b_j r^j,\quad {\rm{for\ any\ }} r\in \mathbb{R}_{+}.
                                       \]
                                       Define a map $f: \mathbb{R}^2\rightarrow \mathbb{R}^2$ by
\begin{eqnarray}
     f(\theta)=\tilde{f}(|\theta|^2)\theta.
\end{eqnarray}

Let $F: \mathbb{R}^2\rightarrow \mathbb{R} $ be a Frech\'{e}t differentiable map such that for any $\theta\in \mathbb{R}^2$ and $g\in \mathbb{R}^2$
\begin{eqnarray}\label{eq-38}
F'(\theta)[g]=f(\theta)\cdot g.
\end{eqnarray}
                                 \end{description}
\end{description}
Under \textbf{Hypothesis H0 (C)}, referring to Appendix D in \cite{B-M-P}, we have
\begin{lemma}\label{lem-3}

For any $\kappa_1>0$ and $\kappa_2>0$, there exists $C(\kappa_1)>0, C_1(\kappa_2)>0, C_2(\kappa_2)>0$ such that
\begin{eqnarray}\label{eq-10}
|\langle f(\theta_1)-f(\theta_2),\theta_1-\theta_2 \rangle|\leq \kappa_1|\nabla \theta_1-\nabla \theta_2|^2+C(\kappa_1)|\theta_1-\theta_2|^2\beta(\theta_1,\theta_2),
\end{eqnarray}
and
\begin{eqnarray}\notag
&&|\langle f(\theta_1)-f(\theta_2),\Delta(\theta_1-\theta_2) \rangle|\\
\label{eq-11}
&\leq& \kappa_2|\Delta \theta_1-\Delta \theta_2|^2+[C_1(\kappa_2)|\nabla(\theta_1-\theta_2)|^2+C_2(\kappa_2)|\theta_1-\theta_2|^2]\beta(\theta_1,\theta_2),
\end{eqnarray}
where
\begin{eqnarray}\label{eq-12}
\beta(\theta_1,\theta_2):=C(1+|\theta_1|^{2N}_{L^{4N+2}}+|\theta_2|^{2N}_{L^{4N+2}})^2.
\end{eqnarray}
Moreover, for any $\theta\in H^1$, it gives
\begin{eqnarray}\label{eq-13}
|f(\theta)|^2&\leq& C(1+|\theta|^q_{L^q}),\quad q=4N+2.
\end{eqnarray}
\end{lemma}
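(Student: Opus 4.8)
The plan is to follow the standard route taken in Appendix D of \cite{B-M-P}, which amounts to exploiting the polynomial structure $f(\theta)=\tilde f(|\theta|^2)\theta$ with $\tilde f(r)=\sum_{j=0}^N b_jr^j$ and $b_j>0$, so that $f$ is a polynomial nonlinearity of degree $2N+1$ with a sign-definite leading coefficient. First I would record the pointwise algebraic facts: for the difference $f(\theta_1)-f(\theta_2)$ one writes $f(\theta_1)-f(\theta_2)=\int_0^1 Df(\theta_2+s(\theta_1-\theta_2))\,ds\,(\theta_1-\theta_2)$, and since $Df(\theta)=\tilde f(|\theta|^2)I+2\tilde f'(|\theta|^2)\theta\otimes\theta$, one gets the pointwise bound $|Df(\theta)|\le C(1+|\theta|^{2N})$. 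Hence $|f(\theta_1)-f(\theta_2)|\le C(1+|\theta_1|^{2N}+|\theta_2|^{2N})|\theta_1-\theta_2|$ pointwise.

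For \eqref{eq-10}: using the pointwise bound above together with H\"older's inequality (splitting the exponent as $\tfrac12+\tfrac{1}{2N+1}\cdot\text{stuff}$), I would estimate
\begin{eqnarray*}
|\langle f(\theta_1)-f(\theta_2),\theta_1-\theta_2\rangle|
\le C\int_{\mathbb{O}}(1+|\theta_1|^{2N}+|\theta_2|^{2N})|\theta_1-\theta_2|^2\,dx
\le C\,\|1+|\theta_1|^{2N}+|\theta_2|^{2N}\|_{L^{(2N+1)/N}}\,\|\theta_1-\theta_2\|_{L^{2(2N+1)}}^2,
\end{eqnarray*}
then interpolate $L^{2(2N+1)}$ between $L^2$ and $H^1$ (Gagliardo–Nirenberg in 2D: $\|\phi\|_{L^{2(2N+1)}}^2\le C\|\phi\|^{2\gamma}\,|\phi|^{2(1-\gamma)}$ with $\gamma\in(0,1)$), and absorb the $\|\theta_1-\theta_2\|^{2\gamma}$ factor into $\kappa_1|\nabla(\theta_1-\theta_2)|^2$ via Young's inequality, paying the price $C(\kappa_1)$ on the remaining $L^2$-norm times the $L^{4N+2}$-norms of $\theta_1,\theta_2$; finally note $\|1+|\theta_i|^{2N}\|_{L^{(2N+1)/N}}\le C(1+|\theta_i|_{L^{4N+2}}^{2N})$, which matches the definition \eqref{eq-12} of $\beta$. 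For \eqref{eq-11} the strategy is the same but one integrates by parts: $\langle f(\theta_1)-f(\theta_2),\Delta(\theta_1-\theta_2)\rangle=-\int_{\mathbb{O}}\nabla(f(\theta_1)-f(\theta_2))\cdot\nabla(\theta_1-\theta_2)\,dx$, expand $\nabla(f(\theta_1)-f(\theta_2))$ by the chain rule into a term carrying $\nabla(\theta_1-\theta_2)$ (bounded by $C(1+|\theta_1|^{2N}+|\theta_2|^{2N})|\nabla(\theta_1-\theta_2)|$) and a term carrying $\nabla\theta_1,\nabla\theta_2$ (bounded by $C(1+|\theta_1|^{2N-1}+|\theta_2|^{2N-1})(|\nabla\theta_1|+|\nabla\theta_2|)|\theta_1-\theta_2|$); then run H\"older, Gagliardo–Nirenberg, and Young exactly as before, this time absorbing into $\kappa_2|\Delta(\theta_1-\theta_2)|^2$ (using $\|\nabla\phi\|_{L^p}$ interpolated between $|\nabla\phi|$ and $|\Delta\phi|$, and $\|\phi\|_{L^p}$ between $|\phi|$ and $|\Delta\phi|$, which is legitimate since $\theta_i\in H^2$). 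The coefficients of $|\nabla(\theta_1-\theta_2)|^2$ and $|\theta_1-\theta_2|^2$ that survive are again of the form $\beta(\theta_1,\theta_2)$ up to constants. The bound \eqref{eq-13} is immediate: $|f(\theta)|=|\tilde f(|\theta|^2)||\theta|\le C(1+|\theta|^{2N})|\theta|=C(|\theta|+|\theta|^{2N+1})$, so $|f(\theta)|^2\le C(1+|\theta|^{4N+2})$ pointwise, hence $\int_{\mathbb{O}}|f(\theta)|^2\,dx\le C(1+|\theta|_{L^{4N+2}}^{4N+2})$ and $\theta\in H^1(\mathbb{O})\hookrightarrow L^{4N+2}(\mathbb{O})$ in 2D makes the right side finite.

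The main obstacle — really a bookkeeping obstacle rather than a conceptual one — is choosing the H\"older exponents and the Gagliardo–Nirenberg interpolation parameters so that (i) the fractional power of the top-order norm ($|\nabla(\theta_1-\theta_2)|$ for \eqref{eq-10}, $|\Delta(\theta_1-\theta_2)|$ for \eqref{eq-11}) that appears is strictly less than $2$, so it can be absorbed by Young's inequality into the prefactor $\kappa_i$, and (ii) the leftover powers of $|\theta_1|_{L^{4N+2}},|\theta_2|_{L^{4N+2}}$ assemble exactly into $\beta(\theta_1,\theta_2)=C(1+|\theta_1|_{L^{4N+2}}^{2N}+|\theta_2|_{L^{4N+2}}^{2N})^2$ and no higher. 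Dimension $d=2$ is used crucially here, since it gives the favorable Gagliardo–Nirenberg exponents (e.g.\ $H^1\hookrightarrow L^r$ for all finite $r$) that make the absorption possible; in $d=3$ one would need a restriction on $N$. Since the excerpt explicitly says ``referring to Appendix D in \cite{B-M-P}'', I would simply reproduce that computation, flagging the 2D Sobolev/interpolation inequalities and Young's inequality as the only tools, and leave the exponent arithmetic to a short displayed computation.
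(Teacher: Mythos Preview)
Your proposal is correct and matches the paper's approach, which is simply to cite Appendix~D of \cite{B-M-P} without reproducing the argument. The outline you give --- mean-value representation of $f(\theta_1)-f(\theta_2)$, the pointwise bound $|Df(\theta)|\le C(1+|\theta|^{2N})$, H\"older plus 2D Gagliardo--Nirenberg plus Young for \eqref{eq-10}, integration by parts followed by the same tools for \eqref{eq-11}, and the direct pointwise bound for \eqref{eq-13} --- is exactly the content of that appendix, so there is nothing to compare.
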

Now, we recall the definition of a strong solution to (\ref{equ-1}) in \cite{B-M-P}.
\begin{dfn}\label{dfn-1}
The system (\ref{equ-1}) has a strong solution if for every stochastic basis $(\Omega,\mathcal{F}, \{\mathcal{F}_t\}_{t\geq 0},\mathbb{P})$ and a time homogeneous Poisson random measure $\tilde{N}$ on $(\mathbb{X},\mathcal{B}(\mathbb{X}))$ over the stochastic basis with intensity measure $\vartheta$, there exist progressively measurable process $u:[0,T]\times \Omega\rightarrow \mathbb{H}$ with $\mathbb{P}-$a.e.
\begin{eqnarray}
u(\cdot, \omega)\in \mathcal{D}([0,T];\mathbb{H})\cap L^2([0,T]; \mathbb{V})
\end{eqnarray}
and progressively measurable process $\theta:[0,T]\times \Omega\rightarrow H^1 $ with $\mathbb{P}-$a.e.
\begin{eqnarray}
\theta(\cdot, \omega)\in C([0,T];H^1)\cap L^2([0,T]; H^2)
\end{eqnarray}
such that for all $t\in [0,T]$ and $\chi\in \mathbb{V}$, the following identity holds  $\mathbb{P}-$a.e.
\begin{eqnarray}\notag
&&(u(t),\chi)+\int^t_0\langle A_1 u(s), \chi\rangle ds+\int^t_0\langle B( u(s)), \chi\rangle ds+\int^t_0\langle M( \theta(s)), \chi\rangle ds\\ \label{eq-14}
&=&(u_0,\chi)+\int^t_0\int_{\mathbb{X}}(G(s,u(s),v),\chi)\tilde{N}(dsdv),
\end{eqnarray}
and for all $\upsilon\in H^1$, the following identity holds  $\mathbb{P}-$a.e.
\begin{eqnarray}\label{eq-15}
(\theta(t),\upsilon)+\int^t_0\langle A_2 \theta(s), \upsilon\rangle ds+\int^t_0\langle \tilde{B}( u(s),\theta(s)), \upsilon\rangle ds+\int^t_0\langle f( \theta(s)), \upsilon\rangle ds=(\theta_0,\upsilon).
\end{eqnarray}

\end{dfn}
According to \cite{B-M-P}, we have
\begin{thm}\label{thm-1}
Let the initial value $(u_0, \theta_0)\in \mathbb{H}\times H^1$. Under \textbf{Hypothesis H0}, the system (\ref{equ-1}) has a strong solution $(u,\theta)$ in the sense of   Definition \ref{dfn-1}. Also, the solution satisfies the following estimates
\begin{eqnarray}\label{eq-16}
\mathbb{E}\left[\sup_{t\in [0,T]}|u(t)|^2_{\mathbb{H}}+\int^T_0 \|u(t)\|^2_{\mathbb{V}}dt\right]<\infty,
\end{eqnarray}
and
\begin{eqnarray}\label{eq-17}
\mathbb{E}\left[\sup_{t\in [0,T]}\|\theta(t)\|^2_{H^1}+\int^T_0 \|\theta(t)\|^2_{H^2}dt\right]<\infty.
\end{eqnarray}

\end{thm}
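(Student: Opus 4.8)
This statement is due to \cite{B-M-P}; the plan is to establish it by the Faedo--Galerkin method together with compactness and a pathwise--uniqueness argument. For $n\in\mathbb N$ let $P_n$ be the orthogonal projection of $\mathbb H$ onto $\mathrm{span}\{\varrho_1,\dots,\varrho_n\}$ and $\tilde P_n$ that of $L^2$ onto $\mathrm{span}\{\varsigma_1,\dots,\varsigma_n\}$, and replace (\ref{equ-1}) by the projected finite--dimensional system for $(u_n,\theta_n)$, with the compensated Poisson integral projected by $P_n$ as well. Since in finite dimensions the maps $B$, $M$, $\tilde B$ and $f$ are locally Lipschitz and $G$ satisfies Hypothesis H0(B), the projected equation is a finite--dimensional stochastic differential equation with jumps admitting a unique local c\`adl\`ag solution, and the a priori bounds below preclude explosion, so it is global.

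Next I would derive uniform estimates, applying It\^o's formula to $|u_n(t)|^2_{\mathbb H}$ and, separately, to the director energy $\|\theta_n(t)\|^2_{H^1}+2\int_{\mathbb O}F(\theta_n(t))\,dx$. The transport terms drop by Lemma \ref{lem-1}(1) and Lemma \ref{lem-2}(1), while the coupling term $\langle M(\theta_n),u_n\rangle$ cancels $\langle\tilde B(u_n,\theta_n),\Delta\theta_n\rangle$ (the Ericksen--Leslie cancellation), and the potential contributes a sign--definite term plus lower--order pieces controlled by (\ref{eq-13}). Using Hypothesis H0(B) on $G$, the Burkholder--Davis--Gundy inequality for the compensated Poisson stochastic integral, and Gronwall's lemma, one gets bounds uniform in $n$ of exactly the form (\ref{eq-16})--(\ref{eq-17}) (and their $p$-th moment analogues via the higher moment bound in Hypothesis H0(B), should these be needed).

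I would then combine these with an estimate on the time increments of $u_n$ in $\mathbb V'$ (splitting the drift, controlled by (\ref{eq-3})--(\ref{eq-4}) and Lemma \ref{lem-1}(2), from the jump part) to verify an Aldous--type tightness condition; with the compact embeddings $\mathbb V\hookrightarrow\mathbb H\hookrightarrow\mathbb V'$ and $H^2\hookrightarrow H^1\hookrightarrow L^2$ this yields tightness of the laws of $(u_n,\theta_n,\tilde N)$ on the product of $\mathcal D([0,T];\mathbb V')\cap L^2([0,T];\mathbb H)$ (also $L^2([0,T];\mathbb V)$ with the weak topology) for the velocity, $C([0,T];L^2)\cap L^2([0,T];H^1)$ for the director, and the space of point measures on $[0,T]\times\mathbb X$. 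By the Jakubowski--Skorokhod representation theorem one passes to a.s.\ convergent versions on a new stochastic basis; the nonlinear terms converge by (\ref{eq-3}), (\ref{eq-4}) and the continuity of $B$, $M$, $\tilde B$, and the compensated Poisson integral converges since $G$ is Lipschitz in $u$, producing a weak martingale solution. For pathwise uniqueness, given two solutions with the same data I would set $w=u_1-u_2$, $\eta=\theta_1-\theta_2$ and estimate $|w(t)|^2+\|\eta(t)\|^2_{H^1}$: the bilinear differences are absorbed via Lemmas \ref{lem-1}, \ref{lem-2} and Young's inequality, the noise difference via the Lipschitz bound in Hypothesis H0(B), and the reaction term via (\ref{eq-10})--(\ref{eq-11}), leaving a small multiple of $|\Delta\eta|^2$ on the left and a factor $\beta(\theta_1,\theta_2)$ that is integrable in time by (\ref{eq-16})--(\ref{eq-17}); a stochastic Gronwall lemma gives $w\equiv0$ and $\eta\equiv0$. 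By Yamada--Watanabe one then obtains a strong solution in the sense of Definition \ref{dfn-1}, and (\ref{eq-16})--(\ref{eq-17}) pass to the limit by lower semicontinuity of the norms.

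The main obstacle is the hydrodynamic coupling. Making the cancellation $\langle M(\theta_n),u_n\rangle+\langle\tilde B(u_n,\theta_n),\Delta\theta_n\rangle=0$ rigorous at the Galerkin level is delicate because $P_n$ and $\tilde P_n$ do not commute with the quadratic maps, so one must test against $P_nu_n$ and $\tilde P_n\Delta\theta_n$ and control the resulting errors; at the same time one must dominate the supercritical polynomial nonlinearity $f$ through the $\beta(\theta_1,\theta_2)$ terms of Lemma \ref{lem-3} with constants uniform in $n$. By contrast the pure--jump noise enters only through routine Burkholder--Davis--Gundy estimates and, in this two--dimensional regime, creates no additional difficulty.
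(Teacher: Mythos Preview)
The paper does not actually prove Theorem~\ref{thm-1}; it is quoted verbatim from \cite{B-M-P} with the preamble ``According to \cite{B-M-P}, we have'', and no argument is given. Your outline is therefore not being compared to a proof in this paper but to the strategy of \cite{B-M-P}, and in that respect it is accurate: Faedo--Galerkin approximation, uniform energy estimates exploiting the Ericksen--Leslie cancellation, Aldous--type tightness, Jakubowski--Skorokhod representation, passage to the limit, pathwise uniqueness via Lemma~\ref{lem-3}, and Yamada--Watanabe. The same machinery is reproduced in this paper in the proof of Theorem~\ref{thm-2} for the deterministic skeleton equation, where the cancellation is invoked as (\ref{eq-41}) by citing (5.28)--(5.29) of \cite{B-M-P}.

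One remark on the ``main obstacle'' you flag. The cancellation $\langle M(\theta_n),u_n\rangle+\langle\tilde B(u_n,\theta_n),-\Delta\theta_n+f(\theta_n)\rangle=0$ is in fact \emph{exact} at the Galerkin level, not merely approximate: since $u_n\in\mathbb H_n$ and $\theta_n\in\mathbb L_n$ one has $P_nu_n=u_n$ and $A_2\theta_n\in\mathbb L_n$ (the $\varsigma_i$ are eigenfunctions), so the projections disappear when testing; the identity itself is pointwise-algebraic (integration by parts together with $\nabla\cdot u_n=0$, which also kills $b(u_n,\theta_n,f(\theta_n))$). There are no ``resulting errors'' to control. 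The truncations $\chi^1_n,\chi^2_n$ used in \cite{B-M-P} and in the proof of Theorem~\ref{thm-2} here serve only to make $B_n,M_n,\tilde B_n$ globally Lipschitz on the finite-dimensional spaces so that local existence is immediate; they do not interfere with the cancellation because the cutoff factors are scalars. So your plan is correct, but the difficulty you anticipate does not materialise.
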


\section{Preliminaries to large deviations}
In this section, we will recall a general criterion for a large deviation principle introduced by Budhiraja, Dupuis and Maroulas in \cite{B-D-M}. To this end, we closely follow the framework and notations in Budhiraja, Chen and Dupuis \cite{B-C-D} and Budhiraja, Dupuis and Maroulas \cite{B-D-M}.

Let $\{X^\varepsilon\}$ be a family of random variables defined on a probability space $(\Omega, \mathcal{F}, \mathbb{P})$ taking values in some Polish space $\mathcal{E}$. The large deviation principle is concerned with exponential decay of $\mathbb{P}(X^\varepsilon\in \cdot)$, as $\varepsilon\rightarrow 0$.

\begin{dfn}
(Rate Function) A function $I: \mathcal{E}\rightarrow [0,\infty]$ is called a rate function if for each $M<\infty$, the level set $\{x\in \mathcal{E}: I(x)\leq M\}$ is a compact subset of $\mathcal{E}$.  For $O\in \mathcal{B}(\mathcal{E})$, we define $I(O):=\inf_{x\in O}I(x)$.
\end{dfn}
\begin{dfn}
(Large Deviation Principle) The sequence $\{X^\varepsilon\}$ is said to satisfy a large deviation principle with rate function $I$ if the following two conditions hold.
\begin{description}
  \item[(a)] Large deviation upper bound. For each closed subset $F$ of $\mathcal{E}$,
  \[
  \lim \sup_{\varepsilon\rightarrow 0}\varepsilon \log \mathbb{P}(X^\varepsilon\in F)\leq -I(F),
  \]
  \item[(b)] Large deviation lower bound. For each open subset $G$ of $\mathcal{E}$,
  \[
  \lim \inf_{\varepsilon\rightarrow 0}\varepsilon \log \mathbb{P}(X^\varepsilon\in G)\geq -I(G).
  \]
\end{description}
\end{dfn}

\subsection{Controlled Poisson random measure}\label{s-1}
The following notations will be used. Let $\mathbb{X}$ be a locally compact Polish space. Set $C_c(\mathbb{X})$ be the space of continuous functions with compact supports. Denote
\[
\mathcal{M}_{FC}(\mathbb{X}):=\Big\{{\rm{measure}}\ \vartheta\  {\rm{on}}\ (\mathbb{X}, \mathcal{B}(\mathbb{X}))\ {\rm{such\ that}}\ \vartheta(K)<\infty\  {\rm{for\ every\ compact}}\ K\ {\rm{in}}\ \mathbb{X}\Big\}.
\]
Endow $\mathcal{M}_{FC}(\mathbb{X})$ with the weakest topology such that for every $f\in C_c(\mathbb{X})$, the function $\vartheta\rightarrow \langle f, \vartheta\rangle=\int_{\mathbb{X}}f(u)d\vartheta(u), \vartheta\in \mathcal{M}_{FC}(\mathbb{X})$ is continuous. This topology can be metrized such that $\mathcal{M}_{FC}(\mathbb{X})$ is a Polish space (see \cite{B-D-M}).

Let $T>0$, set $\mathbb{X}_T=[0,T]\times \mathbb{X}$. Fix a measure $\vartheta\in \mathcal{M}_{FC}(\mathbb{X})$ and let $\vartheta_T=\lambda_T\otimes \vartheta$, where $\lambda_T$ is Lebesgue measure on $[0,T]$. We recall the definition of Poisson random measure from \cite{I-W} that
\begin{dfn}
We call measure $\mathbf{n}$ a Poisson random measure on $\mathbb{X}_T$ with intensity measure $\vartheta_T$ is a $\mathcal{M}_{FC}(\mathbb{X})-$valued random variable such that
 \begin{description}
   \item[(1)] for each $B\in \mathcal{B}(\mathbb{X}_T)$ with $\vartheta_T(B)<\infty$, $\mathbf{n}(B)$ is a Poisson distribution with mean $\vartheta_T(B)$,
   \item[(2)] for disjoint $B_1,\cdot\cdot\cdot, B_k\in \mathcal{B}(\mathbb{X}_T)$, $\mathbf{n}(B_1),\cdot\cdot\cdot, \mathbf{n}(B_k) $ are mutually independent random variables.
 \end{description}

\end{dfn}
Denote by $\mathbb{P}$ the measure induced by $\mathbf{n}$ on $(\mathcal{M}_{FC}(\mathbb{X}_T),\mathcal{B}(\mathcal{M}_{FC}(\mathbb{X}_T)))$. Let $\mathbb{M}=\mathcal{M}_{FC}(\mathbb{X}_T)$. $\mathbb{P}$ is the unique probability measure on $(\mathbb{M}, \mathcal{B}(\mathbb{M}))$, under which the canonical map $N:\mathbb{M}\rightarrow \mathbb{M}$, $N(m):=m$ is a Poisson random measure with intensity measure $\vartheta_T$. In this paper, we also consider probability $\mathbb{P}_{\theta}$, for $\theta>0$, under which $N$ is a Poisson random measure with intensity $\theta \vartheta_T$. The corresponding expectation operators will be denoted by $\mathbb{E}$ and  $\mathbb{E}_{\theta}$, respectively.

Set
\[
\mathbb{Y}=\mathbb{X}\times[0,\infty), \quad \mathbb{Y}_T=[0,T]\times\mathbb{Y}.
\]
 Similarly, let $\bar{\mathbb{M}}=\mathcal{M}_{FC}(\mathbb{Y}_T)$ and let $\bar{\mathbb{P}}$ be the unique probability measure on $(\bar{\mathbb{M}},\mathcal{B}(\bar{\mathbb{M}}))$ under which the canonical mapping $\bar{N}:\bar{\mathbb{M}}\rightarrow \bar{\mathbb{M}}, \bar{N}(m):= m$ is a Poisson random measure with intensity measure $\bar{\vartheta}_T=\lambda_T\otimes\vartheta\otimes\lambda_{\infty}$, with $\lambda_{\infty}$ being Lebesgue measure on $[0,\infty)$. The expectation operator will be denoted by $\bar{\mathbb{E}}$. Let $\mathcal{F}_t:=\sigma\{\bar{N}((0,s]\times {O}):0\leq s\leq t, {O}\in \mathcal{B}(\mathbb{Y})\}$,
and denote by $\bar{\mathcal{F}}_t$ the completion under $\bar{\mathbb{P}}$. Let
 \[
 \bar{\mathcal{P}}\  {\rm{ be\ the\ predictable \ }} \ {\rm{\sigma-field\ on}}\ [0,T]\times \bar{\mathbb{M}}\ {\rm{with\ the\ filtration}}\  \{\bar{\mathcal{F}}_t:0\leq t\leq T\}\ {\rm{on }}\ (\bar{\mathbb{M}},\mathcal{B}(\bar{\mathbb{M}}))
 \]
 and
 \[
  \bar{\mathcal{A}}\ {\rm{ be\ the\ class\ of\ all \ (\bar{\mathcal{P}}\otimes \mathcal{B}(\mathbb{X}))/(\mathcal{B}[0,\infty))-measurable\ maps}}\  \varphi:\mathbb{X}_T\times \bar{\mathbb{M}}\rightarrow [0,\infty).
  \]
For $\varphi\in \bar{\mathcal{A}}$, define a counting process $N^\varphi$ on $\mathbb{X}_T$ by
\begin{eqnarray}
N^\varphi((0,t]\times U)=\int_{(0,t]\times U}\int_{(0,\infty)}I_{[0,\varphi(s,x)]}(r)\bar{N}(ds dxdr), \quad t\in [0,T], \quad U\in \mathcal{B}(\mathbb{X}).
\end{eqnarray}

$N^\varphi$ is the controlled random measure with $\varphi$ selecting the intensity for the points at location $x$ and time $s$, in a possibly random but nonanticipating way. If $\varphi(s,x, \bar{m})\equiv \theta\in(0,\infty)$. We write $N^\varphi=N^{\theta}$. Note that $N^{\theta}$ has the same distribution with respect to $\bar{\mathbb{P}}$ as $N$ has with respect to $\mathbb{P}_{\theta}$.
 Define $l:[0,\infty)\rightarrow[0,\infty)$ by
\[
l(r)=r \log r-r+1,\quad r\in [0, \infty).
\]
For any $\varphi\in \bar{\mathcal{A}}$, the quantity
\begin{eqnarray}
L_T(\varphi)=\int_{\mathbb{X}_T}l(\varphi(t,x,w))\vartheta_T(dtdx)
\end{eqnarray}
is well-defined as a $[0,\infty]-$valued random variable.

\subsection{A general criterion}
In order to state a general criteria for large deviation principle (LDP) obtained by Budhiraja et al. in \cite{B-D-M}, we introduce the following notations.
Define
\[
S^{M}=\{g:\mathbb{X}_T\rightarrow [0,\infty): L_T(g)\leq M\}, \quad S=\cup_{M\geq 1}S^M.
\]
A function $g\in S^{M}$ can be identified with a measure $\vartheta^g_T\in \mathbb{M}$, which is defined by
\[
\vartheta^g_T(O)=\int_{\mathbb{O}}g(s,x)\vartheta_T(dsdx),\quad O\in \mathcal{B}(\mathbb{X}_T).
\]

This identification induces a topology on $ S^{M}$ under which $ S^{M}$ is a compact space (see the Appendix of \cite{B-C-D}). Throughout this paper, we always  use this topology on $S^M$. Let
\[
\mathcal{U}^M=\{\varphi\in \bar{\mathcal{A}}: \varphi(\omega)\in S^M, \bar{\mathbb{P}}-a.e. \omega\},
\]
where $\bar{\mathcal{A}}$ is defined in subsection \ref{s-1}.

Let $\{\mathcal{G}^\varepsilon\}_{\varepsilon>0}$ be a family of measurable maps from $\bar{\mathbb{M}}$ to $\mathbb{U}$, where $\bar{\mathbb{M}}$ is introduced in subsection \ref{s-1} and $\mathbb{U}$ is a Polish space. Let $Z^\varepsilon=\mathcal{G}^\varepsilon(\varepsilon N^{\varepsilon^{-1}})$. Now, we list the following sufficient conditions for establishing LDP for the family $\{Z^\varepsilon\}_{\varepsilon>0}$.
\begin{description}
  \item[\textbf{Condition A} ] There exists a measurable map $\mathcal{G}^0: \bar{\mathbb{M}}\rightarrow \mathbb{U}$ such that the following hold.
\end{description}
\begin{description}
\item[(i)] For every $M<\infty$, let $g_n, g\in S^M$  be such that $g_n\rightarrow g$ as $n\rightarrow \infty$. Then,
      $\mathcal{G}^0(\vartheta^{g_n}_T)\rightarrow \mathcal{G}^0(\vartheta^{g}_T)$ in $\mathbb{U}$.
  \item[(ii)] For every $M<\infty$, let $\{\varphi_{\varepsilon}: \varepsilon>0\}\subset \mathcal{U}^M$ be such that $\varphi_{\varepsilon}$ converges in distribution to $\varphi$ as $\varepsilon\rightarrow 0$. Then,
     $\mathcal{G}^{\varepsilon}(\varepsilon N^{\varepsilon^{-1}\varphi_{\varepsilon}})$ converges to $\mathcal{G}^0(\vartheta^{\varphi}_T)$ in distribution.

\end{description}
The following result is due to Budhiraja et al. in \cite{B-D-M}.
\begin{thm}
Suppose the above \textbf{Condition A} holds. Then $Z^{\varepsilon}$ satisfies a large deviation principle on $\mathbb{U}$ with the good rate function $I$ given by
\begin{eqnarray}\label{eq-18}
I(f)&=&\inf_{\{g\in {S}: f=\mathcal{G}^0(\vartheta^g_T)\}}   \Big\{L_T(g)\Big\},\ \ \forall f\in\mathbb{U}.
\end{eqnarray}
By convention, $I(\emptyset)=\infty.$
\end{thm}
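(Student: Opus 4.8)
The strategy is the weak convergence argument of Budhiraja, Dupuis and Maroulas. Since $\mathbb{U}$ is Polish and the functional $I$ in (\ref{eq-18}) will turn out to be a good rate function, it is well known that the asserted large deviation principle is equivalent to the Laplace principle, namely that for every $h\in C_b(\mathbb{U})$
\[
\lim_{\varepsilon\to 0}\Big(-\varepsilon\log\mathbb{E}\big[\exp\{-\varepsilon^{-1}h(Z^\varepsilon)\}\big]\Big)=\inf_{f\in\mathbb{U}}\big\{h(f)+I(f)\big\}.
\]
The main tool is the variational representation for exponential functionals of a Poisson random measure established in \cite{B-D-M} (see also \cite{B-C-D}), which gives, for every $\varepsilon>0$,
\[
-\varepsilon\log\mathbb{E}\big[\exp\{-\varepsilon^{-1}h(Z^\varepsilon)\}\big]=\inf_{\varphi\in\bar{\mathcal{A}}}\bar{\mathbb{E}}\Big[L_T(\varphi)+h\big(\mathcal{G}^\varepsilon(\varepsilon N^{\varepsilon^{-1}\varphi})\big)\Big].
\]
I would first record that $I$ is a good rate function: if $I(f_n)\le M$, pick $g_n\in S$ with $f_n=\mathcal{G}^0(\vartheta^{g_n}_T)$ and $L_T(g_n)\le M+1$, so $g_n\in S^{M+1}$; by compactness of $S^{M+1}$ a subsequence converges, $g_n\to g$, and then $f_n\to\mathcal{G}^0(\vartheta^g_T)$ by \textbf{Condition A(i)} while $L_T(g)\le\liminf_n L_T(g_n)\le M$ by lower semicontinuity of $L_T$; hence $\{I\le M\}$ is compact and the infimum in (\ref{eq-18}) is attained.

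\emph{Laplace upper bound.} Given $\delta>0$, choose $f_0\in\mathbb{U}$ and $g\in S$ with $f_0=\mathcal{G}^0(\vartheta^g_T)$ and $h(f_0)+L_T(g)\le\inf_f\{h(f)+I(f)\}+\delta$. Using the deterministic (hence admissible) control $\varphi\equiv g$ in the representation,
\[
-\varepsilon\log\mathbb{E}\big[\exp\{-\varepsilon^{-1}h(Z^\varepsilon)\}\big]\le L_T(g)+\bar{\mathbb{E}}\big[h(\mathcal{G}^\varepsilon(\varepsilon N^{\varepsilon^{-1}g}))\big].
\]
The constant family $\varphi_\varepsilon\equiv g$ belongs to some $\mathcal{U}^M$ and converges in distribution to $g$, so \textbf{Condition A(ii)} gives $\mathcal{G}^\varepsilon(\varepsilon N^{\varepsilon^{-1}g})\to f_0$ in distribution; boundedness and continuity of $h$ then force the right-hand side to $L_T(g)+h(f_0)\le\inf_f\{h(f)+I(f)\}+\delta$. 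Sending $\varepsilon\to 0$ and then $\delta\to 0$ gives $\limsup_{\varepsilon\to 0}(-\varepsilon\log\mathbb{E}[\exp\{-\varepsilon^{-1}h(Z^\varepsilon)\}])\le\inf_f\{h(f)+I(f)\}$.

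\emph{Laplace lower bound.} Given $\delta>0$, choose near-optimal controls $\varphi_\varepsilon\in\bar{\mathcal{A}}$ with
\[
-\varepsilon\log\mathbb{E}\big[\exp\{-\varepsilon^{-1}h(Z^\varepsilon)\}\big]+\delta\ge\bar{\mathbb{E}}\Big[L_T(\varphi_\varepsilon)+h\big(\mathcal{G}^\varepsilon(\varepsilon N^{\varepsilon^{-1}\varphi_\varepsilon})\big)\Big].
\]
Since $h$ is bounded, $\sup_\varepsilon\bar{\mathbb{E}}[L_T(\varphi_\varepsilon)]<\infty$, and by the truncation and localization arguments of \cite{B-D-M,B-C-D} one may assume, after enlarging $\delta$ by an arbitrarily small amount, that $\varphi_\varepsilon\in\mathcal{U}^M$ for some fixed $M$. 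Because $S^M$ is compact, $\{\varphi_\varepsilon\}$ is tight in $S^M$, so along a subsequence $\varphi_\varepsilon$ converges in distribution to an $S^M$-valued random variable $\varphi$, and \textbf{Condition A(ii)} gives $\mathcal{G}^\varepsilon(\varepsilon N^{\varepsilon^{-1}\varphi_\varepsilon})\to\mathcal{G}^0(\vartheta^\varphi_T)$ in distribution. Passing to a Skorokhod representation and using lower semicontinuity of $L_T$, Fatou's lemma, and continuity and boundedness of $h$,
\begin{eqnarray*}
\liminf_{\varepsilon\to 0}\Big(-\varepsilon\log\mathbb{E}\big[\exp\{-\varepsilon^{-1}h(Z^\varepsilon)\}\big]\Big)+\delta&\ge&\bar{\mathbb{E}}\big[L_T(\varphi)+h(\mathcal{G}^0(\vartheta^\varphi_T))\big]\\
&\ge&\inf_f\{h(f)+I(f)\},
\end{eqnarray*}
the last step since $I(\mathcal{G}^0(\vartheta^\varphi_T))\le L_T(\varphi)$. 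Letting $\delta\to 0$ and combining with the upper bound proves the Laplace principle, hence the LDP with good rate function $I$.

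\emph{Main obstacle.} This theorem is abstract and belongs to \cite{B-D-M}; the only genuinely delicate points in the scheme above are the variational representation and, especially, the reduction of near-optimal controls to the compact sets $\mathcal{U}^M$, both of which rest on the convexity and superlinear growth of $l(r)=r\log r-r+1$ and may be quoted from \cite{B-D-M,B-C-D}. The substantive, model-dependent work is the verification of \textbf{Condition A} for system (\ref{equ-1}), which is carried out in the later sections.
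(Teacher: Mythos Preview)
Your sketch is correct and follows exactly the standard Laplace-principle/variational-representation argument of Budhiraja, Dupuis and Maroulas. Note, however, that the paper does not give its own proof of this statement at all: it simply records the theorem as ``due to Budhiraja et al.\ in \cite{B-D-M}'' and cites it without argument, reserving all the work for the verification of \textbf{Condition A} in the subsequent sections. So there is nothing to compare---you have supplied a (faithful) outline of the proof from \cite{B-D-M,B-C-D} where the paper merely quotes the result.
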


\subsection{Hypotheses and the statement of main results}
In order to obtain LDP for (\ref{equ-1}), we need additional conditions on the coefficients. Here, we adopt the same conditions as \cite{Y-Z-Z} and state some preliminary results from Budhiraja et al. \cite{B-C-D}.

Let $G:[0,T]\times \mathbb{H}\times \mathbb{X}\rightarrow H$ be a measurable mapping. Set
\begin{eqnarray*}
  |G(t,v)|_{0, \mathbb{H}}&:=&\sup_{u\in \mathbb{H}}\frac{|G(t,u,v)|_{\mathbb{H}}}{1+|u|_{\mathbb{H}}},\quad (t,v)\in [0,T]\times \mathbb{X},\\
  |G(t,v)|_{1, \mathbb{H}}&:=&\sup_{u_1,u_2\in \mathbb{H}, u_1\neq u_2}\frac{|G(t,u_1,v)-G(t,u_2,v)|_{\mathbb{H}}}{|u_1-u_2|_{\mathbb{H}}},\quad (t,v)\in [0,T]\times \mathbb{X},
  \end{eqnarray*}
\begin{description}
  \item[\textbf{Hypothesis H1} ] For $i=0,1$, there exists $\delta^i_1>0$ such that for all $E\in \mathcal{B}([0,T]\times \mathbb{X})$ satisfying $\vartheta_T(E)<\infty$, the following holds
      \[
      \int_E e^{\delta^i_1 |G(s,v)|^2_{i,\mathbb{H}}}\vartheta(dv)ds<\infty.
      \]
\end{description}
Now, we state the following Lemmas established by \cite{B-C-D} and \cite{Y-Z-Z}.
\begin{lemma}\label{lem-6}
Under \textbf{Hypothesis H0} and  \textbf{Hypothesis H1},
\begin{description}
  \item[(i)] For $i=0,1$ and every $M\in \mathbb{N}$,
  \begin{eqnarray}\label{eq-46}
  C^M_{i,1}&:=&\sup_{g\in S^M}\int_{\mathbb{X}_T}|G(s,v)|_{i,\mathbb{H}
  }|g(s,v)-1|\vartheta(dv)ds<\infty,\\
  \label{eq-47}
  C^M_{i,2}&:=&\sup_{g\in S^M}\int_{\mathbb{X}_T}|G(s,v)|^2_{i,\mathbb{H}
  }|g(s,v)+1|\vartheta(dv)ds<\infty.
  \end{eqnarray}
  \item[(ii)] For every $\eta>0$, there exists $\delta>0$ such that for any $A\subset [0,T]$ satisfying $\lambda_T(A)<\delta$
    \begin{eqnarray}\label{eq-48}
  \sup_{g\in S^M}\int_A\int_{\mathbb{X}}|G(s,v)|_{i,\mathbb{H}
  }|g(s,v)-1|\vartheta(dv)ds\leq \eta.
  \end{eqnarray}

\end{description}

\end{lemma}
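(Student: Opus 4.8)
The plan is to reduce both assertions to the Legendre duality between the convex function $l(r)=r\log r-r+1$ and its conjugate $l^{*}(p)=e^{p}-1$, so that the whole dependence on $g$ is funnelled into $L_T(g)$, which is at most $M$ on $S^M$. The only inequality I would need is Young's inequality for the pair $(l,l^{*})$: for all $a\ge 0$, $b\ge 0$ and every $\sigma>0$,
\begin{eqnarray*}
ab\le \frac{1}{\sigma}\bigl(e^{\sigma a}-1\bigr)+\frac{1}{\sigma}l(b),
\end{eqnarray*}
supplemented by the trivial bounds $a^{2}\le\sigma^{-1}e^{\sigma a^{2}}$ and $e^{\sigma a}-1\le(e^{\sigma}-1)a$ for $a\le1$. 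I would first record, for $i=0,1$ and $0<\sigma\le\delta^{i}_{1}$, the auxiliary finiteness facts
\begin{eqnarray*}
&&\int_{\mathbb{X}_T}\bigl(e^{\sigma|G(s,v)|_{i,\mathbb{H}}}-1\bigr)\vartheta(dv)ds<\infty,\qquad \int_{\mathbb{X}_T}|G(s,v)|^{2}_{i,\mathbb{H}}\,\vartheta(dv)ds<\infty,\\
&&\int_{\mathbb{X}_T}\bigl(e^{\sigma|G(s,v)|^{2}_{i,\mathbb{H}}}-1\bigr)\vartheta(dv)ds<\infty,
\end{eqnarray*}
obtained by splitting $\mathbb{X}_T$ into the region $\{|G|_{i,\mathbb{H}}\le1\}$, where each integrand is dominated by a fixed multiple of $|G|_{i,\mathbb{H}}$ or $|G|^{2}_{i,\mathbb{H}}$ and hence is $\vartheta$-integrable there by the moment and Lipschitz bounds of \textbf{Hypothesis H0}, and the region $\{|G|_{i,\mathbb{H}}>1\}$, which has finite $\vartheta_T$-measure and on which each integrand is dominated by $e^{\delta^{i}_{1}|G|^{2}_{i,\mathbb{H}}}$ and hence is integrable by \textbf{Hypothesis H1}.

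For the first quantity in (i), fix $g\in S^M$, bound $|g-1|\le g+1$, and apply Young's inequality with $a=|G(s,v)|_{i,\mathbb{H}}$ and $b=g(s,v)$ to obtain pointwise
\begin{eqnarray*}
|G(s,v)|_{i,\mathbb{H}}\,|g(s,v)-1|\le \frac{1}{\sigma}\,l(g(s,v))+\frac{1}{\sigma}\bigl(e^{\sigma|G(s,v)|_{i,\mathbb{H}}}-1\bigr)+|G(s,v)|_{i,\mathbb{H}}.
\end{eqnarray*}
Integrating over $\mathbb{X}_T$ and using $\int_{\mathbb{X}_T}l(g)\,\vartheta(dv)ds=L_T(g)\le M$ together with the finiteness facts bounds the integral by a constant that does not involve $g$; taking the supremum over $g\in S^M$ gives $C^{M}_{i,1}<\infty$. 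For the second quantity, I repeat the argument with $a=|G(s,v)|^{2}_{i,\mathbb{H}}$, $b=g(s,v)$ and $0<\sigma\le\delta^{i}_{1}$, which gives $|G|^{2}_{i,\mathbb{H}}(g+1)\le \sigma^{-1}l(g)+\sigma^{-1}(e^{\sigma|G|^{2}_{i,\mathbb{H}}}-1)+|G|^{2}_{i,\mathbb{H}}$ and hence $C^{M}_{i,2}<\infty$.

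For (ii), given $\eta>0$ I first choose $\sigma$ so large that $M/\sigma<\eta/3$. Integrating the same pointwise inequality as for $C^{M}_{i,1}$ over $A\times\mathbb{X}$ instead of $\mathbb{X}_T$ gives, uniformly in $g\in S^M$,
\begin{eqnarray*}
\int_{A}\int_{\mathbb{X}}|G|_{i,\mathbb{H}}\,|g-1|\,\vartheta(dv)ds\le \frac{\eta}{3}+\frac{1}{\sigma}\int_{A}\int_{\mathbb{X}}\bigl(e^{\sigma|G|_{i,\mathbb{H}}}-1\bigr)\vartheta(dv)ds+\int_{A}\int_{\mathbb{X}}|G|_{i,\mathbb{H}}\,\vartheta(dv)ds.
\end{eqnarray*}
Since, by the finiteness facts, the maps $s\mapsto\int_{\mathbb{X}}(e^{\sigma|G(s,v)|_{i,\mathbb{H}}}-1)\vartheta(dv)$ and $s\mapsto\int_{\mathbb{X}}|G(s,v)|_{i,\mathbb{H}}\vartheta(dv)$ belong to $L^{1}([0,T])$, absolute continuity of the Lebesgue integral furnishes a $\delta>0$, independent of $g$, for which the last two terms are each $<\eta/3$ whenever $\lambda_T(A)<\delta$; this proves (ii).

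The step I expect to be the real obstacle is establishing the auxiliary finiteness facts uniformly and over all of $\mathbb{X}_T$ rather than merely over sets of finite $\vartheta_T$-measure. Once the Young identity isolates the $g$-dependence into $\int l(g)\,\vartheta(dv)ds\le M$, uniformity over $S^M$ is automatic, and the matter reduces to controlling $\int_{\mathbb{X}_T}e^{\delta^{i}_{1}|G|^{2}_{i,\mathbb{H}}}\vartheta(dv)ds$ on the region where $|G|_{i,\mathbb{H}}$ is large --- which is precisely what \textbf{Hypothesis H1} supplies --- while on the complementary region the exponential is comparable to a polynomial and one falls back on the growth and Lipschitz bounds of \textbf{Hypothesis H0}; reconciling these two regimes is the crux. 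Part (ii) then needs nothing more than the elementary fact that an $L^{1}$ function has an absolutely continuous indefinite integral. Throughout, this follows the lines of \cite{B-C-D} and \cite{Y-Z-Z}.
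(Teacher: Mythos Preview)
The paper does not prove this lemma; it is simply quoted from \cite{B-C-D} and \cite{Y-Z-Z}. Your approach via Young's inequality for the Legendre pair $(l,l^{*})$ is precisely the method used in those references, so in that sense your proposal coincides with the paper's (implicit) proof.

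There is, however, one genuine imprecision in your derivation of the auxiliary finiteness facts. Hypothesis~H0 gives, for each \emph{fixed} $u\in\mathbb{H}$, the bounds $\int_{\mathbb{X}}|G(t,u,v)|^{p}_{\mathbb{H}}\,\vartheta(dv)\le C_{p}(1+|u|^{p})$ and the analogous Lipschitz estimate, whereas $|G(t,v)|_{i,\mathbb{H}}$ is a \emph{supremum} over $u$; the supremum of a family of $\vartheta$-integrable functions need not be $\vartheta$-integrable, so H0 alone does not yield $\int_{\{|G|_{i,\mathbb{H}}\le1\}}|G|_{i,\mathbb{H}}\,\vartheta_{T}<\infty$ or $\int_{\{|G|_{i,\mathbb{H}}\le1\}}|G|^{2}_{i,\mathbb{H}}\,\vartheta_{T}<\infty$. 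Likewise, neither H0 nor H1 as stated directly implies that $\{|G|_{i,\mathbb{H}}>1\}$ has finite $\vartheta_{T}$-measure, which your argument assumes in order to invoke H1 there. You correctly flag this region-splitting as ``the real obstacle'', but the justification you offer for it does not go through as written.

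In \cite{B-C-D} (see their Lemma~3.4 and Remark~3.3) the corresponding integrability is either taken as part of the standing assumptions, or the argument is organised around a decomposition in the \emph{$g$-variable} rather than in $|G|_{i,\mathbb{H}}$: one uses that $l(r)\ge c_{1}|r-1|$ when $|r-1|\ge\beta$ and $l(r)\ge c_{2}|r-1|^{2}$ when $|r-1|<\beta$, combines this with Cauchy--Schwarz on the near-$1$ region, and applies H1 only on sets of the form $\{|g-1|\ge\beta\}$, whose $\vartheta_{T}$-measure is automatically bounded by $M/l(1\pm\beta)$ uniformly over $S^{M}$. With that modification the rest of your outline --- including the absolute-continuity argument for~(ii) --- is correct.
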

\begin{lemma}\label{lem-7}
\begin{description}
  \item[(1)] For any $g\in S$, if $\sup_{t\in[0,T]}|Y(t)|_{\mathbb{H}}<\infty$, then
  \[
  \int_{\mathbb{X}}G(\cdot,Y(\cdot),v)(g(\cdot,v)-1)\vartheta(dv)\in L^1([0,T];\mathbb{H}).
  \]
  \item[(2)] If the family of mappings $\{Y_n:[0,T]\rightarrow H, n\geq 1\}$ satisfying $\sup_n\sup_{t\in[0,T]}|Y_n(t)|_{\mathbb{H}}<\infty$, then
      \[
      \tilde{C}_{M}:=\sup_{g\in S^M}\sup_n \int^T_0|\int_{\mathbb{X}}G(t,Y_n(t),v)(g(t,v)-1)\vartheta(dv)|_{\mathbb{H}}ds<\infty.
      \]
\end{description}
\end{lemma}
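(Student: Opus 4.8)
The plan is to reduce both assertions to the uniform bound $C^M_{0,1}$ furnished by Lemma \ref{lem-6}(i), using only the elementary growth estimate built into the definition of $|G(t,v)|_{0,\mathbb{H}}$. The point is that nothing about the equation or the maps $Y$, $Y_n$ is needed beyond the uniform bound on $\sup_t |Y(t)|_{\mathbb{H}}$; the rest is bookkeeping.

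First I would record the pointwise estimate
\[
|G(t,u,v)|_{\mathbb{H}}\le |G(t,v)|_{0,\mathbb{H}}\,(1+|u|_{\mathbb{H}}),\qquad (t,u,v)\in[0,T]\times\mathbb{H}\times\mathbb{X},
\]
which is immediate from the definition of $|G(t,v)|_{0,\mathbb{H}}$ as a supremum over $u\in\mathbb{H}$. Writing $R:=\sup_{t\in[0,T]}|Y(t)|_{\mathbb{H}}<\infty$ (respectively $R:=\sup_n\sup_{t\in[0,T]}|Y_n(t)|_{\mathbb{H}}<\infty$ in part (2)), this yields $|G(t,Y(t),v)|_{\mathbb{H}}\le (1+R)|G(t,v)|_{0,\mathbb{H}}$, and hence, by the triangle inequality for the $\mathbb{H}$-valued (Bochner) integral,
\[
\left|\int_{\mathbb{X}}G(t,Y(t),v)(g(t,v)-1)\,\vartheta(dv)\right|_{\mathbb{H}}
\le (1+R)\int_{\mathbb{X}}|G(t,v)|_{0,\mathbb{H}}\,|g(t,v)-1|\,\vartheta(dv).
\]
For part (1), fix $g\in S$; then $g\in S^M$ for some $M\in\mathbb{N}$, so integrating the previous display over $[0,T]$ and invoking $(\ref{eq-46})$ gives
\[
\int_0^T\left|\int_{\mathbb{X}}G(t,Y(t),v)(g(t,v)-1)\,\vartheta(dv)\right|_{\mathbb{H}}dt\le (1+R)\,C^M_{0,1}<\infty,
\]
which is precisely the claimed membership in $L^1([0,T];\mathbb{H})$. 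For part (2), $R$ is finite by hypothesis and independent of $n$, so taking the supremum over $n\ge 1$ and over $g\in S^M$ in the same bound yields $\tilde C_M\le (1+R)\,C^M_{0,1}<\infty$.

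The only points that require a word of justification are measurability — that $(t,v)\mapsto G(t,Y(t),v)$ and $(t,v)\mapsto |G(t,v)|_{0,\mathbb{H}}$ are jointly measurable, so that the inner integrals and the application of Lemma \ref{lem-6} make sense, and that $t\mapsto\int_{\mathbb{X}}G(t,Y(t),v)(g(t,v)-1)\vartheta(dv)$ is a well-defined measurable $\mathbb{H}$-valued map — and the applicability of the Bochner triangle inequality, i.e. that $v\mapsto G(t,Y(t),v)(g(t,v)-1)$ is $\vartheta$-integrable for a.e. $t$. All of these follow from the measurability of $G$ in Hypothesis H0 (B), the measurability of $g$, and the dominating bound above. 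I do not anticipate any genuine obstacle: the statement is essentially a direct consequence of the uniform integrability bounds already established in Lemma \ref{lem-6}.
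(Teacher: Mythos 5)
Your argument is correct. The paper itself does not supply a proof of Lemma~\ref{lem-7} — it is stated as a result quoted from \cite{B-C-D} and \cite{Y-Z-Z} — so there is no in-paper proof to compare against, but your derivation is exactly the natural one: the pointwise bound $|G(t,u,v)|_{\mathbb{H}}\le |G(t,v)|_{0,\mathbb{H}}(1+|u|_{\mathbb{H}})$ follows directly from the definition of $|G(t,v)|_{0,\mathbb{H}}$, the Bochner triangle inequality then controls the inner integral by $(1+R)\int_{\mathbb{X}}|G(t,v)|_{0,\mathbb{H}}|g(t,v)-1|\,\vartheta(dv)$, and integrating in $t$ and invoking (\ref{eq-46}) gives the claimed $L^1$ bound (uniformly in $n$ and $g\in S^M$ for part (2)). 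Your remarks on measurability and integrability are appropriate and correctly dispatched by Hypothesis H0(B) together with the dominating bound.
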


\begin{lemma}\label{lem-8}
Let $h:[0,T]\times \mathbb{X}\rightarrow \mathbb{R}$ be a measurable function such that
\[
\int_{\mathbb{X}_T}|h(s,v)|^2\vartheta(dv)ds<\infty,
\]
and for all $\delta\in (0,\infty)$ and $E\in \mathcal{B}([0,T]\times \mathbb{X})$ satisfying $\vartheta_T(E)<\infty$,
\[
\int_E \exp(\delta |h(s,v)|)\vartheta(dv)ds<\infty.
\]
Then, we have
\begin{description}
  \item[(1)] Fix $M\in \mathbb{N}$. Let $g_n, g\in S^M$ be such that $g_n\rightarrow g$ as $n\rightarrow \infty$. Then
      \[
      \lim_{n\rightarrow \infty}\int_{\mathbb{X}_T}h(s,v)(g_n(s,v)-1)\vartheta(dv)ds=\int_{\mathbb{X}_T}h(s,v)(g(s,v)-1)\vartheta(dv)ds.
      \]
  \item[(2)] Fix $M\in \mathbb{N}$. Given $\varepsilon>0$, there exists a compact set $K_{\varepsilon}\subset \mathbb{X}$, such that
      \[
      \sup_{g\in S^M}\int^T_0\int_{K^c_{\varepsilon}}|g(s,v)-1|\vartheta(dv)ds\leq \varepsilon.
      \]
  \item[(3)] For every compact set $K\subset \mathbb{X}$,
  \[
  \lim_{M\rightarrow \infty}\sup_{g\in S^M}\int^T_0\int_{K}|h(s,v)|I_{\{h\geq M\}}g(s,v)\vartheta(dv)ds=0.
  \]
\end{description}
\end{lemma}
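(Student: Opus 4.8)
The plan is to establish the three parts separately; the common engine is the Fenchel--Young inequality attached to $l$. Since, on $[0,\infty)$, $l$ is the Legendre transform of $\sigma\mapsto e^{\sigma}-1$, one has for every $\sigma>0$ and all $a\ge0,\ b\ge0$
\[
a\,b\ \le\ \frac{1}{\sigma}\bigl(e^{\sigma a}-1\bigr)+\frac{1}{\sigma}\,l(b).
\]
Besides this I would only use: $l\ge0$ with $l(r)=0$ iff $r=1$; the superlinear growth of $l$, so that $\rho_R:=\sup_{r\ge R}r/l(r)\to0$ as $R\to\infty$; for each $R>1$ a constant $c_R>0$ with $l(r)\ge c_R(r-1)^2$ on $[0,R]$; and the defining property of the topology of $S^M$, namely that $g_n\to g$ in $S^M$ implies $\int_{\mathbb{X}_T}\phi\, g_n\,\vartheta_T(ds\,dv)\to\int_{\mathbb{X}_T}\phi\, g\,\vartheta_T(ds\,dv)$ for every bounded $\phi$ supported on a set of finite $\vartheta_T$-measure (first for $\phi\in C_c(\mathbb{X}_T)$ by definition of the topology, then for general such $\phi$ by an $L^1$-approximation in which the weight is controlled via $\sup_{\nu\in S^M}\int_{\{\nu>R\}}\nu\,\vartheta_T(ds\,dv)\le\rho_R M$).

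I would do part (3) first, as it is the cleanest. Fix the compact $K\subset\mathbb{X}$ and $\varepsilon>0$. Since $\vartheta_T([0,T]\times K)=T\vartheta(K)<\infty$, the exponential-integrability hypothesis on $h$ gives $\int_{[0,T]\times K}e^{\sigma|h|}\,\vartheta_T(ds\,dv)<\infty$ for every $\sigma>0$. Applying the Young inequality with $a=|h|$, $b=g$ and the bound $L_T(g)\le M$ yields, for every $g\in S^M$,
\[
\int_0^T\!\!\int_K |h|\,I_{\{h\ge R\}}\,g\,\vartheta(dv)\,ds\ \le\ \frac{1}{\sigma}\int_{[0,T]\times K}e^{\sigma|h|}\,I_{\{h\ge R\}}\,\vartheta_T(ds\,dv)+\frac{M}{\sigma}.
\]
Choosing $\sigma$ so large that $M/\sigma<\varepsilon/2$, and then, with $\sigma$ fixed, noting that $e^{\sigma|h|}I_{\{h\ge R\}}\downarrow 0$ $\vartheta_T$-almost everywhere on $[0,T]\times K$ as $R\to\infty$ while being dominated by the integrable function $e^{\sigma|h|}$, dominated convergence produces $R_0$ such that the first term is $<\varepsilon/2$ for all $R\ge R_0$. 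As the right-hand side is independent of $g$, taking the supremum over $S^M$ and then $R\to\infty$ finishes part (3).

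For part (1) I would truncate $h$. For $R>0$ and a compact $K\subset\mathbb{X}$ set $h_{R,K}:=h\,I_{\{|h|\le R\}}\,I_{[0,T]\times K}$, a bounded function supported on a set of finite $\vartheta_T$-measure, and write
\[
\Bigl|\int_{\mathbb{X}_T}h(g_n-1)\,\vartheta_T(ds\,dv)-\int_{\mathbb{X}_T}h(g-1)\,\vartheta_T(ds\,dv)\Bigr|\ \le\ \Bigl|\int_{\mathbb{X}_T}h_{R,K}\,(g_n-g)\,\vartheta_T(ds\,dv)\Bigr|+2\sup_{\nu\in S^M}\int_{\mathbb{X}_T}|h-h_{R,K}|\,(\nu+1)\,\vartheta_T(ds\,dv).
\]
The supremum on the right is made $<\varepsilon/2$, uniformly in $\nu\in S^M$, by taking $R$ and $K$ large: on $\{|h|>R\}$ the part weighted by $1$ is at most $R^{-1}\int_{\mathbb{X}_T}|h|^2\,\vartheta_T(ds\,dv)$, and the part weighted by $\nu$ is handled by the Young-plus-dominated-convergence estimate of part (3) (the set $\{|h|>R\}$ having finite $\vartheta_T$-measure); the contribution of $[0,T]\times K^c$ is controlled in the same way, using that $\int_{[0,T]\times K^c}|h|^2\,\vartheta_T(ds\,dv)\to0$ as $K\uparrow\mathbb{X}$ (valid since $\mathbb{X}$ is $\sigma$-compact). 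Once $R$ and $K$ are fixed, the first term on the right tends to $0$ because $h_{R,K}$ is bounded with support of finite $\vartheta_T$-measure and $g_n\to g$ in $S^M$.

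Finally, for part (2) I would split according to the size of $g$. On $\{g>R\}$, $|g-1|\le g\le\rho_R\,l(g)$, hence
\[
\int_0^T\!\!\int_{\mathbb{X}}|g-1|\,I_{\{g>R\}}\,\vartheta(dv)\,ds\ \le\ \rho_R\,L_T(g)\ \le\ \rho_R\,M,
\]
which is $<\varepsilon/2$ once $R$ is large, uniformly in $g$ and with no compact set needed. On $\{g\le R\}$ one uses $|g-1|\le c_R^{-1/2}\sqrt{l(g)}$, the Cauchy--Schwarz inequality over $[0,T]\times K^c$, and $L_T(g)\le M$ to bound that contribution by $c_R^{-1/2}\sqrt{M}\bigl(\vartheta_T([0,T]\times K^c)\bigr)^{1/2}$, which is made $<\varepsilon/2$ by choosing a compact $K_\varepsilon$ with $\vartheta(K_\varepsilon^c)$ sufficiently small. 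The main obstacle throughout is precisely this type of step in parts (1) and (2): one must obtain bounds that are uniform over the family $S^M$, which is only weakly compact and not uniformly integrable, and it is the combination of the $L^2$ control and the exponential integrability of $h$ with the superlinear growth of $l$ and the $\sigma$-finiteness of $\vartheta$ that makes such uniform control possible; the rest is routine bookkeeping with the Young inequality and dominated convergence.
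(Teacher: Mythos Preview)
The paper does not supply its own proof of this lemma: it is quoted from the references \cite{B-C-D} and \cite{Y-Z-Z} (see the sentence introducing Lemma~\ref{lem-6}), so there is no argument in the paper to compare yours against. Your overall strategy---the Fenchel--Young inequality attached to $l$, the superlinearity of $l$, and truncation combined with the weak topology on $S^M$---is exactly the standard one from those sources, and your treatment of parts~(1) and~(3) follows that template (for~(3) you have tacitly, and correctly, read the two occurrences of $M$ as separate parameters, the level-set bound being fixed while the truncation threshold tends to infinity).

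There is, however, a genuine gap in your argument for part~(2). In the final step you bound the contribution of $\{g\le R\}$ over $[0,T]\times K^c$ by $c_R^{-1/2}\sqrt{M}\,\bigl(\vartheta_T([0,T]\times K^c)\bigr)^{1/2}$ and then propose to choose $K_\varepsilon$ with $\vartheta(K_\varepsilon^c)$ small. For a merely $\sigma$-finite intensity measure this is impossible: if $\vartheta(\mathbb{X})=\infty$ then $\vartheta(K^c)=\infty$ for every compact $K$. Worse, part~(2) as literally stated---with no weight $|h|$ in the integrand---is in fact false in that generality: with $\mathbb{X}=\mathbb{R}$, $\vartheta$ Lebesgue, and $g(s,v)=1+(1+|v|)^{-1}$, one has $l(g)\asymp(1+|v|)^{-2}$ so that $g\in S^M$ for some $M$, yet $\int_{|v|>N}|g-1|\,dv=\infty$ for every $N$. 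The version that is actually used downstream (always in the combination $|G(s,v)|_{i,\mathbb H}\,|g-1|$; compare Lemma~\ref{lem-6}) carries a factor $|h|$, and with that weight your Cauchy--Schwarz step becomes $c_R^{-1/2}\sqrt{M}\bigl(\int_{[0,T]\times K^c}|h|^2\,\vartheta_T\bigr)^{1/2}$, which does go to $0$ as $K\uparrow\mathbb{X}$ by the $L^2$ hypothesis on $h$. The same repair is needed in your part~(1): the ``$+1$'' piece of $\int_{[0,T]\times K^c}|h|\,(\nu+1)$ need not be finite, so one must bound $\int|h-h_{R,K}|\,|\nu-1|$ directly via Cauchy--Schwarz against $(\nu-1)$ on $\{|\nu-1|\le 1\}$, using $\int|h|^2<\infty$, rather than pass through the crude $|\nu-1|\le\nu+1$.
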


In this paper, we consider the following nematic liquid crystals driven by small multiplicative L\'{e}vy noise:
\begin{eqnarray}\label{eq-19}
\left\{
  \begin{array}{ll}
    du^\varepsilon(t)+[A_1 u^\varepsilon(t)+B(u^\varepsilon(t))+M(\theta^\varepsilon(t))]dt=\varepsilon\int_{\mathbb{X}}G(t,u^\varepsilon(t),v)\tilde{N}^{\varepsilon^{-1}}(dtdv), \\
   d\theta^\varepsilon(t)+[A_2 \theta^\varepsilon(t)+\tilde{B}(u^\varepsilon(t),\theta^\varepsilon(t))+f(\theta^\varepsilon(t))]dt=0.
  \end{array}
\right.
\end{eqnarray}
By Theorem \ref{thm-1}, under \textbf{Hypothesis H0}, there exists a unique strong solution of (\ref{equ-1}) in $\mathcal{D}([0,T];\mathbb{H})\times C([0,T];H^1)$. Therefore, there exists a Borel-measurable mapping:
\[
\mathcal{G}^{\varepsilon}: \bar{\mathbb{M}}\rightarrow \mathcal{D}([0,T];\mathbb{H}) \times C([0,T];H^1)
\]
such that $(u^{\varepsilon}(\cdot), \theta^{\varepsilon}(\cdot))=\mathcal{G}^{\varepsilon}(\varepsilon {N}^{\varepsilon^{-1}})$.

For $g\in S$, consider the following skeleton equations
\begin{eqnarray}\label{eq-20}
\left\{
  \begin{array}{ll}
    du^g(t)+[A_1 u^g(t)+B(u^g(t))+M(\theta^g(t))]dt=\int_{\mathbb{X}}G(t,u^g(t),v)(g(t,v)-1)\vartheta(dv)dt, \\
   d\theta^g(t)+[A_2 \theta^g(t)+\tilde{B}(u^g(t),\theta^g(t))+f(\theta^g(t))]dt=0.
  \end{array}
\right.
\end{eqnarray}
The solution $(u^g,\theta^g)$ defines a mapping $\mathcal{G}^0: \bar{\mathbb{M}}\rightarrow \mathcal{D}([0,T];\mathbb{H})\times C([0,T];H^1)$ such that  $(u^g(\cdot),\theta^g(\cdot))=\mathcal{G}^0(\vartheta^g_T)$.

In this paper, our main result is
\begin{thm}
Let $(u_0,\theta_0) \in \mathbb{H}\times H^1 $. Under \textbf{Hypothesis H0} and \textbf{Hypothesis H1}, $(u^{\varepsilon},\theta^{\varepsilon})$ satisfies a large deviation principle on $\mathcal{D}([0,T]; \mathbb{H})\times C([0,T]; H^1)$ with the good rate function $I$ defined by (\ref{eq-18}) with respect to the uniform convergence.
\end{thm}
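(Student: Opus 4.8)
The plan is to verify \textbf{Condition A} for the maps $\mathcal{G}^\varepsilon$ and $\mathcal{G}^0$ defined above; the theorem then follows at once from the general criterion of Budhiraja, Dupuis and Maroulas. This splits into two parts: (i) the qualitative continuity of the skeleton map $g\mapsto\mathcal{G}^0(\vartheta^g_T)$ on the compact sets $S^M$, and (ii) the convergence in distribution of the controlled stochastic dynamics $\mathcal{G}^\varepsilon(\varepsilon N^{\varepsilon^{-1}\varphi_\varepsilon})$ to $\mathcal{G}^0(\vartheta^\varphi_T)$ when $\varphi_\varepsilon\to\varphi$ in $\mathcal{U}^M$. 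Both rest on the well-posedness and a priori bounds for the skeleton equation \eqref{eq-20}, which must be established first: one shows, using the cancellation properties in Lemma \ref{lem-1}(1) and Lemma \ref{lem-2}(1), the estimates \eqref{eq-3}, \eqref{eq-4}, the $f$-estimates of Lemma \ref{lem-3}, together with the control bounds of Lemma \ref{lem-6} and Lemma \ref{lem-7}, that \eqref{eq-20} has a unique solution $(u^g,\theta^g)\in\mathcal{D}([0,T];\mathbb{H})\cap L^2([0,T];\mathbb{V})$ times $C([0,T];H^1)\cap L^2([0,T];H^2)$, with bounds uniform over $g\in S^M$. Existence would be obtained by a Faedo–Galerkin scheme as in \cite{B-M-P}: project onto $\mathrm{span}\{\varrho_1,\dots,\varrho_n\}\times\mathrm{span}\{\varsigma_1,\dots,\varsigma_n\}$, derive the energy estimates, extract a weakly/weak-$\ast$ convergent subsequence together with strong $L^2([0,T];\mathbb{H}\times H^1)$ convergence via an Aubin–Lions compactness argument, and pass to the limit; uniqueness follows by testing the difference of two solutions and using \eqref{eq-10} plus Gronwall.

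\textbf{Part (i): continuity of the skeleton map.} Given $g_n\to g$ in $S^M$, let $(u_n,\theta_n)=\mathcal{G}^0(\vartheta^{g_n}_T)$ and $(u,\theta)=\mathcal{G}^0(\vartheta^g_T)$. Using the uniform-in-$n$ bounds from the a priori estimates, I would extract a subsequence converging weakly in $L^2([0,T];\mathbb{V})\times L^2([0,T];H^2)$ and, via compactness (the time-derivative bounds give equicontinuity in a weaker norm, and $\mathbb{V}\hookrightarrow\mathbb{H}$, $H^2\hookrightarrow H^1$ are compact), strongly in $C([0,T];\mathbb{V}')\times C([0,T];H^1)$ and in $L^2([0,T];\mathbb{H}\times H^1)$. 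The only term that is genuinely sensitive to $g_n$ is $\int_{\mathbb{X}}G(s,u_n(s),v)(g_n(s,v)-1)\,\vartheta(dv)$; here one writes it as $\int_{\mathbb{X}}G(s,u(s),v)(g_n-1)\vartheta(dv)+\int_{\mathbb{X}}(G(s,u_n(s),v)-G(s,u(s),v))(g_n-1)\vartheta(dv)$, controls the second piece by $|G(s,v)|_{1,\mathbb{H}}|u_n(s)-u(s)|$ and $C^M_{1,1}$ from Lemma \ref{lem-6} together with the strong $L^2$ convergence of $u_n$, and handles the first piece by Lemma \ref{lem-8}(1) applied componentwise (against a countable dense family in $\mathbb{H}$), using Lemma \ref{lem-8}(2) to truncate to a compact set in $\mathbb{X}$. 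Passing to the limit in the nonlinear PDE terms $B$, $M$, $\tilde B$, $f$ is routine given the strong $L^2$ convergence; one identifies the limit as a solution of \eqref{eq-20} with control $g$, and uniqueness forces the whole sequence to converge.

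\textbf{Part (ii): convergence of the controlled SPDE.} By the Skorokhod representation theorem (or the Jakubowski version adapted to the nonmetric weak topologies) one may assume $\varphi_\varepsilon\to\varphi$ almost surely in $S^M$. Writing $(u^\varepsilon,\theta^\varepsilon)=\mathcal{G}^\varepsilon(\varepsilon N^{\varepsilon^{-1}\varphi_\varepsilon})$, the noise term is $\varepsilon\int_{\mathbb{X}}G\,\widetilde{N}^{\varepsilon^{-1}\varphi_\varepsilon}(dsdv)$, which decomposes into a compensated small-noise martingale part plus the drift $\int_{\mathbb{X}}G(s,u^\varepsilon(s-),v)(\varphi_\varepsilon(s,v)-1)\vartheta(dv)ds$. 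One establishes uniform (in $\varepsilon$) moment bounds of the type \eqref{eq-16}–\eqref{eq-17} for $(u^\varepsilon,\theta^\varepsilon)$ using Hypothesis H0(B), the control bounds of Lemma \ref{lem-7}, and the Burkholder–Davis–Gundy inequality for the martingale term; the martingale part is shown to vanish in $L^2$ as $\varepsilon\to0$ because its predictable quadratic variation is $O(\varepsilon)$. Tightness of the laws of $(u^\varepsilon,\theta^\varepsilon)$ in $\mathcal{D}([0,T];\mathbb{H}_w)\cap L^2([0,T];\mathbb{H})$ times $C([0,T];H^1)$ follows from an Aldous-type criterion applied to the drift terms plus the martingale estimate; then one passes to the limit exactly as in Part (i), the key term being the convergence of the $G$-drift, handled again by splitting and by Lemma \ref{lem-8}(1). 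The limit is identified as the unique solution of \eqref{eq-20} with control $\varphi$, i.e. $\mathcal{G}^0(\vartheta^\varphi_T)$, and uniqueness upgrades convergence in distribution along subsequences to convergence of the full family.

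\textbf{Main obstacle.} I expect the crux to be the global well-posedness and the uniform a priori estimates for the skeleton equation \eqref{eq-20}: controlling simultaneously the coupling term $M(\theta)$ in the velocity equation and the polynomial nonlinearity $f(\theta)$ of degree $q=4N+2$ in the director equation requires carefully chosen interpolation inequalities (Lemma \ref{lem-2}(2), \eqref{eq-4}, Lemma \ref{lem-3}) so that the dangerous terms can be absorbed into the dissipation $\|u\|^2_{\mathbb{V}}+\|\theta\|^2_{H^2}$, and the driving control $g\in S^M$ must be absorbed using only the $L\log L$-type bound $L_T(g)\le M$ together with Lemma \ref{lem-6}, not an $L^2$ bound. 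The two-dimensionality is essential here, as it is for the deterministic Ericksen–Leslie system. The subsequent LDP machinery, once these estimates are in hand, is comparatively standard.
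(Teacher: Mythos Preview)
Your proposal is correct and follows essentially the same route as the paper: reduce to \textbf{Condition A}, establish global well-posedness and uniform-in-$S^M$ a priori bounds for the skeleton equation \eqref{eq-20} via Faedo--Galerkin plus Aubin--Lions compactness, verify (i) by compactness and passage to the limit (the $G$-term being handled exactly as you describe), and verify (ii) by uniform moment bounds, Aldous tightness, Skorokhod representation, and identification of the limit.

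One technical device you do not name but the paper exploits in Part (ii): rather than merely observing that the martingale part has quadratic variation $O(\varepsilon)$, the paper introduces an auxiliary linear process $\tilde\xi^\varepsilon$ solving $d\tilde\xi^\varepsilon=-A_1\tilde\xi^\varepsilon\,dt+\varepsilon\int_{\mathbb X}G(s,\tilde u^\varepsilon(s-),v)\widetilde N^{\varepsilon^{-1}\varphi_\varepsilon}(ds\,dv)$ with $\tilde\xi^\varepsilon(0)=0$, shows $\tilde\xi^\varepsilon\to 0$ in $\mathcal D([0,T];\mathbb H)\cap L^2([0,T];\mathbb V)$, and then works with $p^\varepsilon=\tilde u^\varepsilon-\tilde\xi^\varepsilon$, which satisfies a purely deterministic-looking equation. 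This is what allows the paper to upgrade from tightness in the weak space $\mathcal D([0,T];D(A_1^{-\alpha}))$ to convergence in $\mathcal D([0,T];\mathbb H)$; your sketch of ``martingale part vanishes plus Aldous in $\mathcal D([0,T];\mathbb H_w)$'' is morally equivalent but would need a comparable step to reach the strong topology required in the statement. Also a small slip: the skeleton solution $u^g$ is in $C([0,T];\mathbb H)$, not merely $\mathcal D$.
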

\begin{proof}
According to Theorem \ref{thm-1}, we need to prove (i) and (ii) in \textbf{Condition A}. The verification of (i) will be established by Proposition \ref{prp-1}, (ii) will be proved by Theorem \ref{thm-3}.
\end{proof}

\section{The skeleton equation}
In this section, we will show that the skeleton equation (\ref{eq-20}) admits a unique solution for every $g\in S$.

Let $K$ be a Banach space with norm $\|\cdot\|_K$.
Given $p>1, \alpha\in (0,1)$, as in \cite{FG95}, let $W^{\alpha,p}([0,T]; K)$ be the Sobolev space of all $u\in L^p([0,T];K)$ such that
\[
\int^T_0\int^T_0\frac{\|u(t)-u(s)\|_K^{ p}}{|t-s|^{1+\alpha p}}dtds< \infty,
\]
endowed with the norm
\[
\|u\|^p_{W^{\alpha,p}([0,T]; K)}=\int^T_0\|u(t)\|_K^pdt+\int^T_0\int^T_0\frac{\|u(t)-u(s)\|_K^{ p}}{|t-s|^{1+\alpha p}}dtds.
\]
The following results can be found in \cite{FG95}.
\begin{lemma}\label{lem-5}
Let $B_0\subset B\subset B_1$ be Banach spaces, $B_0$ and $B_1$ reflexive, with compact embedding $B_0\subset B$. Let $p\in (1, \infty)$ and $\alpha \in (0, 1)$ be given. Let $X$ be the space
\[
X= L^p([0, T]; B_0)\cap W^{\alpha, p}([0,T]; B_1),
\]
endowed with the natural norm. Then the embedding of $X$ in $L^p([0,T];B)$ is compact.
\end{lemma}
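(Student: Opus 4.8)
This is a fractional‑in‑time variant of the Aubin--Lions--Simon compactness lemma, and the plan is to prove it by showing directly that every bounded subset $\mathcal F\subset X$ is totally bounded --- hence, since $L^p([0,T];B)$ is complete, relatively compact --- in $L^p([0,T];B)$. The first ingredient is Ehrling's lemma: because $B_0\hookrightarrow B$ is compact and $B\hookrightarrow B_1$ is continuous, for every $\eta>0$ there is $C_\eta>0$ with $\|v\|_B\le\eta\|v\|_{B_0}+C_\eta\|v\|_{B_1}$ for all $v\in B_0$; applying this with $v=v(t)$ and invoking Minkowski's inequality in $L^p([0,T])$, the same inequality holds with the norms of $L^p([0,T];B_0)$, $L^p([0,T];B)$ and $L^p([0,T];B_1)$ in place of those of $B_0$, $B$ and $B_1$.

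The second ingredient is the control of time translations afforded by the fractional Sobolev seminorm: for $0<h<T$ and $u\in W^{\alpha,p}([0,T];B_1)$,
\[
\int_0^{T-h}\|u(t+h)-u(t)\|_{B_1}^{p}\,dt\ \le\ 2^{p}\,h^{\alpha p}\,[u]_{W^{\alpha,p}([0,T];B_1)}^{p},
\]
which follows from the identity $u(t+h)-u(t)=\frac{1}{h}\int_t^{t+h}\big((u(t+h)-u(s))+(u(s)-u(t))\big)\,ds$, Jensen's inequality, the elementary bounds $|t+h-s|\le h$ and $|s-t|\le h$ for $s\in(t,t+h)$, and integration in $t$ followed by domination by the full double integral defining the seminorm. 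In particular a bounded set in $X$ has a uniform $L^p$‑modulus of continuity with values in $B_1$: $\sup_{u\in\mathcal F}\|\tau_h u-u\|_{L^p([0,T-h];B_1)}\to0$ as $h\to0$, where $\tau_h u(t)=u(t+h)$.

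With these in hand I would conclude by time mollification. Extend each $u\in\mathcal F$ from $[0,T]$ to $\mathbb R$ by a bounded extension operator (e.g. reflection across the endpoints, followed by a smooth cutoff), which changes the $X$‑norm by at most a fixed factor, and set $u^{\varepsilon}:=\rho_{\varepsilon}*u$ for a standard mollifier $\rho_{\varepsilon}$ supported in $[-\varepsilon,\varepsilon]$ with $\int\rho_{\varepsilon}=1$. For fixed $\varepsilon$, H\"{o}lder's inequality --- here the hypothesis $p>1$, hence $p'<\infty$, is used --- gives $\sup_{t\in[0,T]}\|u^{\varepsilon}(t)\|_{B_0}\le C_{\varepsilon}\|u\|_{L^p([0,T];B_0)}$ together with an equicontinuity estimate for $t\mapsto u^{\varepsilon}(t)\in B_0$ whose modulus depends only on $\varepsilon$ and $\|u\|_{L^p([0,T];B_0)}$; since $B_0\hookrightarrow B$ is compact, the Arzel\`{a}--Ascoli theorem shows $\{u^{\varepsilon}:u\in\mathcal F\}$ is relatively compact in $C([0,T];B)$, hence in $L^p([0,T];B)$. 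On the other hand, writing $u^{\varepsilon}(t)-u(t)=\int\rho_{\varepsilon}(s)\big(u(t-s)-u(t)\big)\,ds$ and using Jensen's inequality, Fubini's theorem, the translation estimate above with $h=|s|\le\varepsilon$, and $\int\rho_{\varepsilon}(s)|s|^{\alpha p}\,ds\le\varepsilon^{\alpha p}$, one gets
\[
\sup_{u\in\mathcal F}\|u^{\varepsilon}-u\|_{L^p([0,T];B_1)}\ \le\ 2\,\varepsilon^{\alpha}\,\sup_{u\in\mathcal F}[u]_{W^{\alpha,p}(\mathbb R;B_1)}\ =:\ C\varepsilon^{\alpha}\ \to\ 0\qquad(\varepsilon\to0).
\]
Finally, given $\eta>0$, the $L^p$‑Ehrling inequality gives, for every $u\in\mathcal F$, $\|u^{\varepsilon}-u\|_{L^p([0,T];B)}\le\delta\,\|u^{\varepsilon}-u\|_{L^p([0,T];B_0)}+C_\delta\,\|u^{\varepsilon}-u\|_{L^p([0,T];B_1)}\le 2\delta R+C_\delta C\varepsilon^{\alpha}$, with $R:=\sup_{u\in\mathcal F}\|u\|_{L^p([0,T];B_0)}$; choosing first $\delta$ and then $\varepsilon$ small makes $\sup_{u\in\mathcal F}\|u^{\varepsilon}-u\|_{L^p([0,T];B)}<\eta$. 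Thus every element of $\mathcal F$ lies within $\eta$ of the totally bounded set $\{u^{\varepsilon}:u\in\mathcal F\}$; as $\eta$ is arbitrary, $\mathcal F$ is totally bounded in $L^p([0,T];B)$, which is the asserted compact embedding.

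The genuine work is confined to two technical points: the behaviour of the mollification near the endpoints $t=0$ and $t=T$ (handled by the extension operator, which is precisely what allows the translation estimate to be applied to the mollified function) and the verification of the Arzel\`{a}--Ascoli hypotheses with values in $B$ rather than $B_0$; everything else is routine bookkeeping. I would note that reflexivity of $B_0$ and $B_1$ is not needed along this route --- it enters in alternative arguments based on weak compactness --- and that, once the uniform time‑translation bound is in place, the conclusion is also immediate from Simon's abstract compactness criterion, $\mathcal F$ being bounded in $L^p([0,T];B_0)$ with $B_0\hookrightarrow B$ compact.
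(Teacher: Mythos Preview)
Your argument is correct, and it supplies substantially more than the paper does: the paper does not prove this lemma at all but simply cites Flandoli and Gatarek \cite{FG95}, where the result appears. Your route --- Ehrling's interpolation inequality, the $L^p$ time-translation bound $\|\tau_h u-u\|_{L^p(B_1)}\lesssim h^{\alpha}[u]_{W^{\alpha,p}(B_1)}$ extracted from the fractional seminorm, and a mollification/Arzel\`a--Ascoli reduction to a family precompact in $C([0,T];B)$ --- is the standard path to Simon-type compactness and is carried out cleanly.

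Your remark that reflexivity of $B_0$ and $B_1$ is unused here is accurate; reflexivity enters only in weak-compactness proofs that pass through weak limits in $L^p([0,T];B_0)$, and Flandoli--Gatarek list it as a hypothesis without essential use. The only two spots requiring care are precisely those you flag: the endpoint extension (even reflection is bounded on $W^{\alpha,p}$ for $\alpha\in(0,1)$, so this is fine) and the Arzel\`a--Ascoli step (pointwise boundedness in $B_0$ together with the compact embedding $B_0\hookrightarrow B$ gives pointwise relative compactness in $B$, and equicontinuity in $B_0$ implies equicontinuity in $B$). Both are handled correctly.
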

\begin{lemma}\label{lem-4}
For $V$ and $H$ are two Hilbert spaces ($V'$ is the dual space of $V$) with $V\subset\subset H=H'\subset V'$, where $V\subset\subset H$ denotes $V$ is compactly embedded in $H$. If $u\in L^2([0,T];V)$, $\frac{du}{dt}\in L^2([0,T];V')$, then $u\in C([0,T];H)$.
\end{lemma}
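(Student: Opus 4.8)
The plan is to follow the classical Lions--Magenes / Temam argument: the heart of the matter is to show that for any $u$ in the stated regularity class the scalar function $t\mapsto|u(t)|_H^2$ admits an absolutely continuous representative, with
\[
|u(t)|_H^2-|u(s)|_H^2=2\int_s^t\langle u'(r),u(r)\rangle\,dr,\qquad 0\le s\le t\le T,
\]
where $\langle\cdot,\cdot\rangle$ is the $V'$--$V$ duality pairing; this is meaningful because $u(r)\in V$ and $u'(r)\in V'$ for a.e.\ $r$, and $r\mapsto\langle u'(r),u(r)\rangle$ lies in $L^1([0,T])$ by the Cauchy--Schwarz inequality and the assumed integrabilities. (Only the continuous dense embedding $V\hookrightarrow H$ is actually used; the compactness is not needed for this statement.)

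First I would reduce to smooth functions by mollification in time. Extend $u$ from $[0,T]$ to the whole real line as $\tilde u$ with $\tilde u\in L^2(\mathbb{R};V)$ and $\tilde u'\in L^2(\mathbb{R};V')$ (a routine reflection/truncation construction), set $u_\varepsilon=\rho_\varepsilon*\tilde u$ for a standard mollifier $\rho_\varepsilon$, and observe that $u_\varepsilon\in C^\infty(\mathbb{R};V)$ with $u_\varepsilon\to u$ in $L^2_{\mathrm{loc}}(\mathbb{R};V)$ and $u_\varepsilon'\to u'$ in $L^2_{\mathrm{loc}}(\mathbb{R};V')$. For the smooth approximants the chain rule gives $\tfrac{d}{dt}|u_\varepsilon(t)|_H^2=2\langle u_\varepsilon'(t),u_\varepsilon(t)\rangle$; integrating on $[s,t]$ and letting $\varepsilon\to0$ (the integrand converges in $L^1$ since $u_\varepsilon\to u$ in $L^2(V)$ and $u_\varepsilon'\to u'$ in $L^2(V')$) yields the displayed identity for a.e.\ $s,t$. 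Since its right-hand side is absolutely continuous in $t$, we may redefine $u$ on a null set so that $t\mapsto|u(t)|_H^2$ is continuous on $[0,T]$; in particular $u$ is bounded in $H$.

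Next I would upgrade to weak continuity of $u:[0,T]\to H$. As $u\in L^2([0,T];V')$ and $u'\in L^2([0,T];V')$, one-dimensional Sobolev embedding gives $u\in C([0,T];V')$, hence $u(t_n)\to u(t_0)$ in $V'$ when $t_n\to t_0$. Using the $H$-boundedness from the previous step, reflexivity of $H$, and density of $V$ in $H$, a subsequence argument shows $u(t_n)\rightharpoonup u(t_0)$ weakly in $H$: any weak $H$-limit of a subsequence of $\{u(t_n)\}$ must agree, when tested against the dense set $V$, with the $V'$-limit $u(t_0)$ (here one uses $(h,\phi)_H=\langle h,\phi\rangle_{V',V}$ for $h\in H$, $\phi\in V$). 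Finally, combining weak convergence with convergence of norms $|u(t_n)|_H\to|u(t_0)|_H$ forces strong convergence in the Hilbert space $H$, via $|u(t_n)-u(t_0)|_H^2=|u(t_n)|_H^2-2(u(t_n),u(t_0))_H+|u(t_0)|_H^2\to0$. Therefore $u\in C([0,T];H)$.

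The main obstacle is the first step: giving rigorous meaning to the energy identity and the integration-by-parts rule $\tfrac{d}{dt}|u|_H^2=2\langle u',u\rangle$. The delicate points there are (a) constructing an extension of $u$ off $[0,T]$ that simultaneously keeps it in $L^2(V)$ and keeps its distributional derivative in $L^2(V')$, and (b) justifying the limit in the mollified identity, which rests on $\langle u_\varepsilon',u_\varepsilon\rangle\to\langle u',u\rangle$ in $L^1$ whenever $(u_\varepsilon,u_\varepsilon')\to(u,u')$ in $L^2(V)\times L^2(V')$. Everything afterwards—the weak continuity and the weak-plus-norm upgrade to strong continuity—is soft Hilbert-space functional analysis.
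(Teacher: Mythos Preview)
Your argument is the standard Lions--Magenes/Temam proof and is correct; your observation that only the continuous dense embedding $V\hookrightarrow H$ is needed (not compactness) is also right. The paper, however, does not prove this lemma at all: it is stated without proof, with the preceding sentence ``The following results can be found in \cite{FG95}'' indicating that it is simply quoted as a known result. So there is no paper proof to compare against; your proposal supplies exactly the classical argument that the cited references contain.
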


For the skeleton equation (\ref{eq-20}), we have
\begin{thm}\label{thm-2}
Given $(u_0,\theta_0)\in \mathbb{H}\times H^1 $ and $g\in S$. Assume \textbf{Hypothesis H0} and \textbf{Hypothesis H1} hold, then there exists a unique solution $(u^g, \theta^g)$ such that
\[
u^g\in C([0,T];\mathbb{H})\cap L^2([0,T];\mathbb{V}),\quad \theta^g\in C([0,T];H^1)\cap L^2([0,T];H^2),
\]
and
\begin{eqnarray}\notag
u^g(t)&=&u_0-\int^t_0 A_1 u^g(s)ds-\int^t_0 B( u^g(s))ds-\int^t_0 M( \theta^g(s))ds\\
\label{eq-21}
&&\quad +\int^t_0\int_{\mathbb{X}}G(s,u^g(s),v)(g(s,v)-1)\vartheta(dv)ds,\\
\label{eq-22}
\theta^g(t)&=&\theta_0-\int^t_0 A_2 \theta^g(s)ds-\int^t_0 \tilde{B}( u^g(s),\theta^g(s))ds-\int^t_0 f (\theta^g(s))ds.
\end{eqnarray}
Moreover, for any $M\in \mathbb{N}$, there exists $C(p,M)>0$ such that
\begin{eqnarray}\label{eq-23}
\sup_{g\in S^M} \left(\sup_{s\in [0,T]}|\theta^g(s)|^p_{L^2}+\int^T_0|\theta^g(s)|^{p-2}_{L^2}(\|\theta^g(s)\|^2+|\theta^g(s)|^{2N+2}_{L^{2N+2}})ds\right)\leq C(p,M),
\end{eqnarray}
and
\begin{eqnarray}\label{eq-24}
\sup_{g\in S^M}\left( \sup_{s\in [0,T]}(\Psi(\theta^g(s))+|u^g(s)|^2)^p+\Big(\int^T_0(\|u^g(s)\|^2+|\Delta \theta^g(s)-f(\theta^g(s))|^2)ds\Big)^p\right)\leq C(p,M),
\end{eqnarray}
where $\Psi(\theta^g(s)):=\frac{1}{2}\|\theta^g(s))\|^2+\frac{1}{2}\int_{\mathbb{O}}G(|\theta^g(s)|^2)dx$.
\end{thm}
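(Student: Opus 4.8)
The plan is to prove existence via a Faedo--Galerkin scheme, existence of a limit via compactness, and uniqueness via an energy/Gronwall argument, with the a~priori bounds \eqref{eq-23}--\eqref{eq-24} obtained uniformly in $g\in S^M$ along the way. First I would fix $g\in S^M$ and, following \cite{B-M-P}, project \eqref{eq-20} onto the finite-dimensional spaces spanned by $\{\varrho_i\}_{i\le n}$ (for $u$) and $\{\varsigma_i\}_{i\le n}$ (for $\theta$), obtaining a system of ODEs with locally Lipschitz coefficients; the right-hand side driven by $g$ is handled using Lemma~\ref{lem-7}, which guarantees that $t\mapsto\int_{\mathbb X}G(t,Y(t),v)(g(t,v)-1)\vartheta(dv)$ lies in $L^1([0,T];\mathbb H)$ whenever $Y$ is bounded in $\mathbb H$, so the Galerkin ODEs have local solutions that are then continued to $[0,T]$ by the a~priori estimates below.

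The a~priori estimates are the technical heart. For the $\theta$-equation I would test with $|\theta^g_n|^{p-2}\theta^g_n$ (more precisely with $\theta^g_n$ and bootstrap the $L^p_t$-norm): the term $\langle\tilde B(u^g_n,\theta^g_n),\theta^g_n\rangle$ vanishes by Lemma~\ref{lem-2}(1), the viscous term gives $\|\theta^g_n\|^2$, and the $f$-term is coercive in the sense that $\langle f(\theta),\theta\rangle$ controls $|\theta|^{2N+2}_{L^{2N+2}}$ up to lower order (this is the structure in Hypothesis~H0(C) and Lemma~\ref{lem-3}); this yields \eqref{eq-23}. For the coupled estimate \eqref{eq-24} I would use the natural energy functional: test the $u$-equation with $u^g_n$ and the $\theta$-equation with $-\Delta\theta^g_n+f(\theta^g_n)=A_2\theta^g_n+f(\theta^g_n)$, so that $F'(\theta)$ produces the potential term $\int_{\mathbb O}G(|\theta^g_n|^2)\,dx$ in $\Psi$ by \eqref{eq-38}, and the cross terms $\langle M(\theta^g_n),u^g_n\rangle$ and $\langle\tilde B(u^g_n,\theta^g_n),-\Delta\theta^g_n\rangle$ cancel (this is the classical Lin--Liu cancellation, using the definitions of $m$ and $b$). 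The remaining bad term is the noise term $\langle\int_{\mathbb X}G(t,u^g_n,v)(g(t,v)-1)\vartheta(dv),u^g_n\rangle$, which by Lemma~\ref{lem-6} and Lemma~\ref{lem-7} is bounded by $(1+|u^g_n|)\cdot C^M_{0,\mathbb H}(t)$ with $\int_0^T C^M_{0,\mathbb H}(t)\,dt\le C^M_{0,1}<\infty$; an application of Gronwall's lemma (in the version allowing an $L^1$ coefficient) then gives \eqref{eq-24} uniformly over $g\in S^M$, using the estimates \eqref{eq-3}, \eqref{eq-4} on $B$ and $M$ to absorb the nonlinear terms into the dissipation. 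Part (ii) of Lemma~\ref{lem-6} (uniform integrability of the noise coefficient in time) is what makes the Gronwall constant independent of $g$.

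Next I would pass to the limit $n\to\infty$. From \eqref{eq-23}--\eqref{eq-24} the sequence $(u^g_n)$ is bounded in $L^\infty([0,T];\mathbb H)\cap L^2([0,T];\mathbb V)$ and $(\theta^g_n)$ in $L^\infty([0,T];H^1)\cap L^2([0,T];H^2)$; from the equations the time derivatives $\tfrac{d}{dt}u^g_n$ and $\tfrac{d}{dt}\theta^g_n$ are bounded in $L^{4/3}$ or $L^2$ in time with values in a negative-order space (using \eqref{eq-7}, \eqref{eq-9}, Lemma~\ref{lem-1}(2), \eqref{eq-3}, \eqref{eq-4}, \eqref{eq-13}), hence in particular in some $W^{\alpha,p}([0,T];B_1)$. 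Applying Lemma~\ref{lem-5} (Aubin--Lions--Flandoli--G\c{a}tarek) gives strong convergence of $u^g_n\to u^g$ in $L^2([0,T];\mathbb H)$ and $\theta^g_n\to\theta^g$ in $L^2([0,T];H^1)$, which together with the weak convergences suffices to identify all the nonlinear limits $B(u^g_n)\to B(u^g)$, $M(\theta^g_n)\to M(\theta^g)$, $\tilde B(u^g_n,\theta^g_n)\to\tilde B(u^g,\theta^g)$, $f(\theta^g_n)\to f(\theta^g)$, and the noise term (using the Lipschitz bound in Hypothesis~H0(B) and dominated convergence justified by Lemma~\ref{lem-7}). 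The time continuity $u^g\in C([0,T];\mathbb H)$, $\theta^g\in C([0,T];H^1)$ follows from Lemma~\ref{lem-4}. Finally, uniqueness: given two solutions, I would look at the difference $(\delta u,\delta\theta)$, test with $\delta u$ and with $-\Delta\delta\theta$, use the cancellation identities, estimate $M(\theta_1)-M(\theta_2)$ and $\tilde B(u_1,\theta_1)-\tilde B(u_2,\theta_2)$ by \eqref{eq-3}, \eqref{eq-4}, Lemma~\ref{lem-1}(2), Lemma~\ref{lem-2}(2), control the $f$-difference by Lemma~\ref{lem-3} (inequalities \eqref{eq-10}--\eqref{eq-11}, choosing $\kappa_1,\kappa_2$ small), bound the noise difference by Hypothesis~H0(B), and close with Gronwall using that the coefficient $\beta(\theta_1,\theta_2)$ is integrable in time thanks to the already-established bounds; this is exactly the scheme of \cite{B-M-P}.

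The main obstacle I expect is obtaining the a~priori estimate \eqref{eq-24} \emph{uniformly in} $g\in S^M$: the drift term $\int_{\mathbb X}G(t,u^g,v)(g(t,v)-1)\vartheta(dv)$ is only $L^1$ in time rather than $L^2$, so a crude energy inequality does not immediately give a Gronwall-closable bound; one must carefully combine the $L^\infty_t$-coercivity coming from the dissipation $\|u^g\|^2+|\Delta\theta^g-f(\theta^g)|^2$ with the uniform integrability estimate of Lemma~\ref{lem-6}(ii) and the uniform $L^1$-bound $\tilde C_M$ of Lemma~\ref{lem-7}(2), and iterate over a partition of $[0,T]$ into short intervals on which the noise contribution is small. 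Everything else is a by-now standard adaptation of the deterministic Lin--Liu analysis together with the arguments in \cite{B-M-P}.
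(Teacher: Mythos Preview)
Your proposal is correct and follows essentially the same route as the paper: Faedo--Galerkin approximation, the $|\theta|^{p-2}\theta$ test for \eqref{eq-23}, the combined energy functional $\Psi(\theta)+\tfrac12|u|^2$ with the Lin--Liu cancellation $\langle M(\theta),u\rangle+\langle\tilde B(u,\theta),-\Delta\theta+f(\theta)\rangle=0$ for \eqref{eq-24}, $W^{\alpha,2}$ time-regularity bounds plus Lemma~\ref{lem-5} for compactness, Lemma~\ref{lem-4} for time continuity, and a weighted Gronwall argument (with Lemma~\ref{lem-3}) for uniqueness.

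The one point on which you overshoot is the ``main obstacle'' you anticipate. In the paper the noise term is simply bounded as
\[
\Big\langle\int_{\mathbb X}G(t,u_n,v)(g(t,v)-1)\vartheta(dv),\,u_n\Big\rangle
\le \big(1+2|u_n|^2\big)\int_{\mathbb X}|G(t,v)|_{0,\mathbb H}|g(t,v)-1|\,\vartheta(dv),
\]
and since the coefficient $t\mapsto\int_{\mathbb X}|G(t,v)|_{0,\mathbb H}|g(t,v)-1|\,\vartheta(dv)$ is in $L^1(0,T)$ with integral bounded by $C^M_{0,1}$ uniformly over $g\in S^M$ (Lemma~\ref{lem-6}(i)), the standard Gronwall inequality with an $L^1$ coefficient applies directly; no partition of $[0,T]$ into short intervals or iteration is needed. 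The dissipative terms $\|u_n\|^2$ and $|\Delta\theta_n-f(\theta_n)|^2$ are not used to absorb anything here---they just sit on the left-hand side.
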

\begin{proof}
(Existence) \ We apply the Faedo-Galerkin approximation method to deduce the existence of solution of (\ref{equ-1}). Let $\Phi_n:\mathbb{R}\rightarrow [0,1]$ be a smooth function such that $\Phi_n(t)=1$, if $|t|\leq n$, $\Phi_n(t)=0$, if $|t|> n+1$. Define $\chi^1_n: \mathbb{H}\rightarrow \mathbb{H}$ as $\chi^1_n(u)=\Phi_n(|u|_{\mathbb{H}})u$, and $\chi^2_n: L^2\rightarrow L^2$ as $\chi^2_n(\theta)=\Phi_n(|\theta|_{L^2})\theta$. Define the following finite dimensional spaces for any $n\in \mathbb{N}$,
\[
\mathbb{H}_n:={\rm{Span}}\{\varrho_1,\cdot\cdot\cdot,\varrho_n\},\quad \mathbb{L}_n:={\rm{Span}}\{\varsigma_1,\cdot\cdot\cdot,\varsigma_n\}.
\]
Denote by $P_n$ the projection from $\mathbb{H}$ onto $\mathbb{H}_n$, and $\tilde{P}_n$ the projection from $L^2$ onto $\mathbb{L}_n$. Let
\begin{eqnarray*}
B_n(u)&:= &\chi^1_n(u)B(u), \quad u\in \mathbb{H}_n,\\
M_n(\theta)&:= &\chi^2_n(\theta)M(\theta), \quad \theta\in \mathbb{L}_n,\\
\tilde{B}_n(\theta)&:= &\chi^1_n(u)\tilde{B}(u,\theta), \quad u\in \mathbb{H}_n, \ \theta\in \mathbb{L}_n.
\end{eqnarray*}
Based on the above mappings, consider the following Faedo-Galerkin approximations: $(u_n(t),\theta_n(t))\in \mathbb{H}_n\times \mathbb{L}_n$, which is the solution of
\begin{eqnarray}\label{eq-25}
&du_n(t)+A_1 u_n(t)dt+P_n B_n(u_n(t))dt+P_n M_n(\theta_n(t))dt=P_n\int_{\mathbb{X}}G(t, u_n(t),v)(g(t,v)-1)\vartheta(dv)dt,&\\
\label{eq-26}
&d\theta_n(t)+A_2 \theta_n(t)dt+\tilde{P}_n \tilde{B}_n(u_n(t),\theta_n(t))dt+\tilde{P}_n f_n(\theta_n(t))dt=0,&
\end{eqnarray}
with the initial condition $(u_n(0), \theta_n(0))=(P_nu_0, \tilde{P}_n \theta_0)$.

Since $B_n, M_n, \tilde{B}_n$ are all globally Lipschitz continuous, the existence of solutions to (\ref{eq-25})-(\ref{eq-26}) can be obtained using similar method as \cite{A-B-W}.

Now, for the solution $(u_n(t),\theta_n(t))$ of (\ref{eq-25})-(\ref{eq-26}), we aim to show that, for any $p\geq 1$,
\begin{eqnarray}\label{eq-27}
\sup_{n} \left(\sup_{s\in [0,T]}|\theta_n(s)|^p_{L^2}+\int^T_0|\theta_n(s)|^{p-2}_{L^2}(\|\theta_n(s)\|^2+|\theta_n(s)|^{2N+2}_{L^{2N+2}})ds\right)\leq C(p,M),
\end{eqnarray}
and
\begin{eqnarray}\label{eq-28}
\sup_{n}\left( \sup_{s\in [0,T]}(\Psi(\theta_n(s))+|u_n(s)|^2)^p+\Big(\int^T_0(\|u_n(s)\|^2+|\Delta \theta_n(s)-f(\theta_n(s))|^2)ds\Big)^p\right)\leq C(p,M),
\end{eqnarray}
where $\Psi(\theta_n(s)):=\frac{1}{2}\|\theta_n(s))\|^2+\frac{1}{2}\int_{\mathbb{O}}F(|\theta_n(s)|^2)dx$ and  $F$ is defined by (\ref{eq-38}).

Firstly, we prove (\ref{eq-27}). For $p\geq 2$, let $\psi(\cdot)$ be the mapping defined by
\begin{eqnarray}\label{eq-29}
\psi(\theta(t)):=\frac{1}{p}|\theta(t)|^p,\quad \theta\in L^2.
\end{eqnarray}
The first Fr\'{e}chet derivative is
\begin{eqnarray}\label{eq-30}
\psi'(\theta)[h]=|\theta(t)|^{p-2}\langle \theta,h\rangle.
\end{eqnarray}
Based on (\ref{eq-26}), (\ref{eq-29}) and (\ref{eq-30}), we deduce that
\begin{eqnarray*}\label{eq-31}
d\psi(\theta_n(t))=-|\theta_n(t)|^{p-2}\langle A_2\theta_n(t)+\tilde{B}_n(u_n(t),\theta_n(t))+f_n(\theta_n(t)),\theta_n(t)\rangle dt.
\end{eqnarray*}
Due to (\ref{eq-29}) and Lemma \ref{lem-2}, we arrive at
\begin{eqnarray}\label{eq-32}
|\theta_n(t)|^p+\int^t_0|\theta_n(s)|^{p-2}\|\theta_n(s)\|^2ds+\int^t_0|\theta_n(s)|^{p-2}\langle f_n(\theta_n(s)),\theta_n(s)\rangle ds\leq |\theta_0|^p.
\end{eqnarray}
Referring to equations (5.12) and (5.13) in \cite{B-M-P}, it gives that
\begin{eqnarray}\label{eq-33}
\langle f(\theta), \theta\rangle\geq\int_{\mathbb{O}}|\theta(x)|^{2N+2}dx-C\int_{\mathbb{O}}|\theta(x)|^2dx.
\end{eqnarray}
Putting (\ref{eq-33}) into (\ref{eq-32}), we deduce that
\begin{eqnarray*}\label{eq-34}
|\theta_n(t)|^p+\int^t_0|\theta_n(s)|^{p-2}\|\theta_n(s)\|^2ds+\int^t_0|\theta_n(s)|^{p-2}(|\theta_n(s)|^{2N+2}_{L^{2N+2}}-C|\theta_n(s)|^2) ds\leq |\theta_0|^p,
\end{eqnarray*}
which implies that
\begin{eqnarray}\label{eq-35}
|\theta_n(t)|^p+\int^t_0|\theta_n(s)|^{p-2}\|\theta_n(s)\|^2ds+\int^t_0|\theta_n(s)|^{p-2}|\theta_n(s)|^{2N+2}_{L^{2N+2}} ds\leq |\theta_0|^p+C\int^t_0|\theta_n(s)|^pds.
\end{eqnarray}
Applying Gronwall's inequality, we reach
\begin{eqnarray}\label{eq-36}
\sup_{t\in[0,T]}|\theta_n(t)|^p\leq |\theta_0|^pe^{CT}.
\end{eqnarray}
Combining (\ref{eq-35}) and (\ref{eq-36}), we conclude that
\begin{eqnarray*}\notag
&&\int^T_0|\theta_n(s)|^{p-2}\|\theta_n(s)\|^2ds+\int^T_0|\theta_n(s)|^{p-2}|\theta_n(s)|^{2N+2}_{L^{2N+2}} ds\\ \notag
&\leq& |\theta_0|^p+C\int^T_0|\theta_n(s)|^pds\\
\label{eq-37}
&\leq& |\theta_0|^p(1+CTe^{CT}).
\end{eqnarray*}
Thus, we complete the result (\ref{eq-27}).

For (\ref{eq-28}), we firstly define a stopping time
\begin{eqnarray}
\tau^R_n:=\inf\{t\geq 0:|u_n(t)|\geq R \ {\rm{or}}\ |\theta_n(t)|_{L^2}\geq R\ {\rm{or}}\ \|\theta_n(t)\|_{H^1}\geq R\}\wedge T.
\end{eqnarray}
From (\ref{eq-27}), we deduce that $\tau^R_n\uparrow T, \mathbb{P}-a.s.$, as $R\uparrow \infty$.

For any $t\in [0,\tau^R_n]$, define a mapping $\phi(\cdot)$ as
\[
\phi(u(t))=\frac{1}{2}|u(t)|^2,\quad u\in\mathbb{H}.
\]
Then, we have
\begin{eqnarray*}
d\phi(u_n(t))&=&-\langle A_1u_n(t)+B_n(u_n(t))+M_n(\theta_n(t)),u_n(t)\rangle dt\\
&&\quad +\langle P_n \int_{\mathbb{X}}G(t,u_n(t),v)(g(t,v)-1)\vartheta(dv),u_n(t)\rangle dt.
\end{eqnarray*}
Using Lemma \ref{lem-1}, we obtain
\begin{eqnarray*}\label{eq-39}
d\phi(u_n(t))+\|u_n(t)\|^2dt=-\langle M_n(\theta_n(t)),u_n(t)\rangle dt
 +\langle P_n \int_{\mathbb{X}}G(t,u_n(t),v)(g(t,v)-1)\vartheta(dv),u_n(t)\rangle dt.
\end{eqnarray*}
Let $\Psi(\cdot)$ be the mapping defined by
\[
\Psi(\theta)=\frac{1}{2}\|\theta\|^2+\frac{1}{2}\int_{\mathbb{O}}F(|\theta|^2)dx.
\]
Using (\ref{eq-38}), the first Fr\'{e}chet derivative is
\[
\Psi'(\theta)[g]=\langle \nabla \theta, \nabla g\rangle+\langle f(\theta), g\rangle=\langle -\Delta \theta+f(\theta), g\rangle.
\]
Then, we have
\begin{eqnarray*}\notag
d\Psi(\theta_n(t))&=&\Psi'(\theta_n)[d\theta_n(t)]=\langle -\Delta \theta_n+f_n(\theta_n), d\theta_n(t)\rangle\\
\notag
&=&\langle -\Delta \theta_n+f_n(\theta_n), -A_2 \theta_n(t)-f_n\theta_n(t)-\tilde{B}_n(u_n(t),\theta_n(t))\rangle dt\\
\label{eq-40}
&=&-|\Delta \theta_n-f_n(\theta_n)|^2dt-\langle\tilde{B}_n(u_n(t),\theta_n(t)), -\Delta \theta_n+f_n(\theta_n)\rangle dt.
\end{eqnarray*}
Referring to (5.28)-(5.29) in \cite{B-M-P}, it gives
\begin{eqnarray}\label{eq-41}
\langle\tilde{B}_n(u_n(t),\theta_n(t)), -\Delta \theta_n+f_n(\theta_n)\rangle=-\langle M_n(\theta_n),u_n\rangle.
\end{eqnarray}
This implies
\begin{eqnarray}\label{eq-42}
d\Psi(\theta_n(t))+|\Delta \theta_n-f_n(\theta_n)|^2dt=\langle M_n(\theta_n),u_n\rangle dt.
\end{eqnarray}
Adding (\ref{eq-39}) and (\ref{eq-42}), we get
\begin{eqnarray}\notag
&&d[\Psi(\theta_n(t))+\phi(u_n(t))]+(\|u_n(t)\|^2+|\Delta \theta_n-f_n(\theta_n)|^2)dt\\
\label{eq-43}
&=&\langle P_n \int_{\mathbb{X}}G(t,u_n(t),v)(g(t,v)-1)\vartheta(dv),u_n(t)\rangle dt.
\end{eqnarray}
Since
\begin{eqnarray*}
&&\langle P_n \int_{\mathbb{X}}G(t,u_n(t),v)(g(t,v)-1)\vartheta(dv),u_n(t)\rangle \\
&\leq& \int_{\mathbb{X}}|G(t,u_n(t),v)||g(t,v)-1||u_n(t)|\vartheta(dv)\\
&\leq& \int_{\mathbb{X}}\frac{|G(t,u_n(t),v)|}{1+|u_n(t)|}|g(t,v)-1|(1+|u_n(s)|)|u_n(t)|\vartheta(dv)\\
&\leq& \int_{\mathbb{X}}|G(t,v)|_{0,\mathbb{H}}|g(t,v)-1|(1+2|u_n(t)|^2)\vartheta(dv)\\
&\leq& \int_{\mathbb{X}}|G(t,v)|_{0,\mathbb{H}}|g(t,v)-1|\vartheta(dv)+2\int_{\mathbb{X}}|G(t,v)|_{0,\mathbb{H}}|g(t,v)-1||u_n(t)|^2\vartheta(dv),
\end{eqnarray*}
we conclude that
\begin{eqnarray}\notag
&&[\Psi(\theta_n(t))+|u_n(t)|^2]+\int^t_0(\|u_n(s)\|^2+|\Delta \theta_n-f_n(\theta_n)|^2)ds\\
\notag
&\leq& \Psi(\theta_0)+|u_0|^2+\int^t_0\int_{\mathbb{X}}|G(s,v)|_{0,\mathbb{H}}|g(s,v)-1|\vartheta(dv)ds\\
\label{eq-44}
&&\quad +2\int^t_0|u_n(s)|^2\int_{\mathbb{X}}|G(s,v)|_{0,\mathbb{H}}|g(s,v)-1|\vartheta(dv)ds.
\end{eqnarray}
Applying Gronwall inequality to (\ref{eq-44}), we have
\begin{eqnarray}\notag
&&\sup_{t\in[0,\tau^R_n]}[\Psi(\theta_n(t))+|u_n(t)|^2]+\int^{\tau^R_n}_0(\|u_n(s)\|^2+|\Delta \theta_n-f_n(\theta_n)|^2)ds\\
\notag
&\leq& \Big[\Psi(\theta_0)+|u_0|^2+\int^{\tau^R_n}_0\int_{\mathbb{X}}|G(s,v)|_{0,\mathbb{H}}|g(s,v)-1|\vartheta(dv)ds\Big]\\
\notag
&&\quad \times \exp\Big\{2\int^{\tau^R_n}_0\int_{\mathbb{X}}|G(s,v)|_{0,\mathbb{H}}|g(s,v)-1|\vartheta(dv)ds\Big\}\\ \notag
&\leq& \Big[\Psi(\theta_0)+|u_0|^2+\int^{T}_0\int_{\mathbb{X}}|G(s,v)|_{0,\mathbb{H}}|g(s,v)-1|\vartheta(dv)ds\Big]\\
\label{eq-45}
&&\quad \times \exp\Big\{2\int^{T}_0\int_{\mathbb{X}}|G(s,v)|_{0,\mathbb{H}}|g(s,v)-1|\vartheta(dv)ds\Big\}.
\end{eqnarray}
Utilizing (\ref{eq-46}), we deduce that
\begin{eqnarray}\notag
&&\sup_{t\in[0,\tau^R_n]}[\Psi(\theta_n(t))+|u_n(t)|^2]+\int^{\tau^R_n}_0(\|u_n(s)\|^2+|\Delta \theta_n-f_n(\theta_n)|^2)ds\\
\label{eq-52}
&\leq& [\Psi(\theta_0)+|u_0|^2+C^M_{0,1}]Te^{C^M_{0,1}T}.
\end{eqnarray}
As the constant in the right hand side of (\ref{eq-52}) is independent of $R$ and $n$, passing to the limit as $R\rightarrow \infty$, we obtain
\begin{eqnarray}\label{eq-52-1}
\sup_{s\in [0,T]}[\Psi(\theta_n(t))+|u_n(t)|^2]+\int^T_0(\|u_n(s)\|^2+|\Delta \theta_n-f_n(\theta_n)|^2)ds
\leq C(p,T).
\end{eqnarray}

Moreover, with the help of (\ref{eq-52-1}), we can obtain an estimate for $\Delta \theta_n$ and $\|\theta_n(t)\|^2_{H^1}$ using similar method as Proposition 5.6 in \cite{B-M-P}. Concretely, for any $p\geq 1$, there exists a positive constant $C$ independent of $n$ such that
\begin{eqnarray}\label{eq-49}
|\int^T_0|\Delta \theta_n(s) |^2ds|^p\leq C(p),\quad \sup_{t\in[0,T]}\|\theta_n(t)\|^{2p}_{H^1}\leq C(p).
\end{eqnarray}
In the following, we want to prove that for $\alpha\in (0,\frac{1}{2})$, there exists $C({\alpha}), L(\alpha)>0$ such that
\begin{eqnarray}\label{eq-50}
\sup_{n\geq 1}\|u_n\|^2_{W^{\alpha,2}([0,T];\mathbb{V}')}&\leq& C(\alpha).\\
\label{eq-51}
\sup_{n\geq 1}\|\theta_n\|^2_{W^{\alpha,2}([0,T];(H^2)')}&\leq& L(\alpha).
\end{eqnarray}
Firstly, $u_n(t)$ can be written as
\begin{eqnarray*}
u_n(t)&=&P_n u_0-\int^t_0 A_1 u_n(s)ds-\int^t_0B_n(u_n(s))ds-\int^t_0M_n(\theta_n(s))ds\\
&&\quad +\int^t_0\int_{\mathbb{X}}G(s, u_n(s),v)(g(s,v)-1)\vartheta(dv)ds\\
&:=&I^1_n+I^2_n(t)+I^3_n(t)+I^4_n(t)+I^5_n(t).
\end{eqnarray*}
Clearly, $|I^1_n|^2\leq C_1$. Since $\|A_1 u_n\|_{\mathbb{V}'}\leq \|u_n\|$, for $t>s$, we have
\begin{eqnarray*}
\|I^2_n(t)-I^2_n(s)\|^2_{\mathbb{V}'}&=&\|\int^t_sA_1 u_n(r)dr \|^2_{\mathbb{V}'}\\
&\leq& C(t-s)\int^t_s\|A_1 u_n(r)\|^2_{\mathbb{V}'}dr\\
&\leq& C(t-s)\int^t_s\|u_n(r)\|^2dr.
\end{eqnarray*}
Hence, by (\ref{eq-28}), we have for $\alpha\in (0,\frac{1}{2})$,
\begin{eqnarray*}\notag
&&\|I^2_n\|^2_{W^{\alpha,2}([0,T];\mathbb{V}')}\\ \notag
&\leq&\int^T_0 \|I^2_n(t)\|^2_{\mathbb{V}'}dt+\int^T_0\int^T_0\frac{\|I^2_n(t)-I^2_n(s)\|^2_{\mathbb{V}'}}{|t-s|^{1+2\alpha}}dsdt\\
&\leq& C_2(\alpha).
\end{eqnarray*}
Moreover, using (\ref{eq-3}), for $t>s$, we get
\begin{eqnarray*}
\|I^3_n(t)-I^3_n(s)\|^2_{\mathbb{V}'}&=&\|\int^t_sB_n( u_n(r))dr \|^2_{\mathbb{V}'}\\
&\leq& C(t-s)\int^t_s\|B_n( u_n(r))\|^2_{\mathbb{V}'}dr\\
&\leq& C(t-s)\int^t_s|u_n(r)|^4dr\\
&\leq& C(t-s)\sup_{t\in[0,T]}|u_n(t)|^2\int^t_s\|u_n(r)\|^2dr,
\end{eqnarray*}
thus, by (\ref{eq-28}), for $\alpha\in (0,\frac{1}{2})$, we have
\begin{eqnarray*}\notag
\|I^3_n\|^2_{W^{\alpha,2}([0,T];\mathbb{V}')}
\leq C_3(\alpha).
\end{eqnarray*}
Utilizing (\ref{eq-4}), for $t>s$, we deduce that
\begin{eqnarray*}
\|I^4_n(t)-I^4_n(s)\|^2_{\mathbb{V}'}&=&\|\int^t_sM_n( \theta_n(r))dr \|^2_{\mathbb{V}'}\\
&\leq& \left(\int^t_s\|M_n( \theta_n(r))\|_{\mathbb{V}'}dr\right)^2\\
&\leq& \left(\int^t_s\| \theta_n(r)\||\Delta \theta_n(r)|dr\right)^2\\
&\leq& C(t-s)\sup_{t\in[0,T]}\| \theta_n(r)\|^2\int^t_s|\Delta \theta_n(r)|^2dr,
\end{eqnarray*}
hence, by (\ref{eq-28}) and (\ref{eq-49}), for $\alpha\in (0,\frac{1}{2})$, we have
\begin{eqnarray*}\notag
\|I^4_n\|^2_{W^{\alpha,2}([0,T];\mathbb{V}')}
\leq C_4(\alpha).
\end{eqnarray*}
For $I^5_n$, we have
\begin{eqnarray*}
|I^5_n(t)-I^5_n(s)|^2_{\mathbb{H}}&=&\Big|\int^t_sP_n\int_{\mathbb{X}}G(r,u_n(r),v)(g(r,v)-1)\vartheta(dv) dr\Big|^2_{\mathbb{H}}\\
&\leq& \left(\int^t_s\int_{\mathbb{X}}|G(r,u_n(r),v)||g(r,v)-1|\vartheta(dv)dr\right)^2\\
&\leq& \left(\int^t_s\int_{\mathbb{X}}|G(r,v)|_{0,\mathbb{H}}|g(r,v)-1|(1+|u_n(r)|)\vartheta(dv)dr\right)^2\\
&\leq& (1+\sup_{t\in[0,T]}| u_n(r)|^2)\left(\int^t_s\int_{\mathbb{X}}|G(r,v)|_{0,\mathbb{H}}|g(r,v)-1|\vartheta(dv)dr\right)^2\\
&\leq& (1+\sup_{t\in[0,T]}| u_n(r)|^2)\int^T_0\int_{\mathbb{X}}|G(r,v)|_{0,\mathbb{H}}|g(r,v)-1|\vartheta(dv)dr\\
&& \times \int^t_s\int_{\mathbb{X}}|G(r,v)|_{0,\mathbb{H}}|g(r,v)-1|\vartheta(dv)dr,
\end{eqnarray*}
with the help of (\ref{eq-46}) and  (\ref{eq-28}), for $\alpha\in (0,\frac{1}{2})$, we get
\begin{eqnarray*}
\|I^5_n\|^2_{W^{\alpha,2}([0,T];\mathbb{H})}
\leq C_5(\alpha).
\end{eqnarray*}
Based on the above estimates, we complete the proof of (\ref{eq-50}). The proof of (\ref{eq-51}) is similar to (\ref{eq-50}). $\theta_n(t)$ can be written as
\begin{eqnarray*}
\theta_n(t)&=&P_n \theta_0-\int^t_0A_2\theta_n(s)ds-\int^t_0\tilde{B}_n(u_n(s),\theta_n(s))ds-\int^t_0f_n(\theta_n(s))ds\\
&:=&J^1_n+J^2_n(t)+J^3_n(t)+J^4_n(t).
\end{eqnarray*}
It's easy to know $\|J^1_n\|_{H^1}\leq L_1$. For $t>s$, we have
\begin{eqnarray*}
\|J^2_n(t)-J^2_n(s)\|^2_{(H^2)'}&=&\|\int^t_sA_2\theta_n(r)dr \|^2_{(H^2)'}\\
&\leq& C\int^t_s\|A_2\theta_n(r)\|^2_{(H^2)'}dr\\
&\leq& C\int^t_s|\theta_n(r)|^2dr\\
&\leq& C(t-s)\sup_{t\in[0,T]}|\theta_n(t)|^2,
\end{eqnarray*}
thus, by (\ref{eq-27}), for $\alpha\in (0,\frac{1}{2})$, we have
\begin{eqnarray*}\notag
\|J^2_n\|^2_{W^{\alpha,2}([0,T];(H^2)')}
\leq L_2(\alpha).
\end{eqnarray*}
Using Lemma \ref{lem-2}, for $t>s$, we get
\begin{eqnarray*}
\|J^3_n(t)-J^3_n(s)\|^2_{(H^2)'}&=&\|\int^t_s\tilde{B}_n(u_n(r),\theta_n(r))dr \|^2_{(H^2)'}\\
&\leq& C\left(\int^t_s|\tilde{B}_n(u_n(r),\theta_n(r))|dr\right)^2\\
&\leq& C\left(\int^t_s|u_n(r)|^{\frac{1}{2}}\|u_n(r)\|^{\frac{1}{2}}\|\theta_n(r)\|^{\frac{1}{2}}|\Delta\theta_n(r)|^{\frac{1}{2}}dr\right)^2\\
&\leq& C(t-s)\sup_{t\in[0,T]}|u_n(t)|^2\left(\int^t_s\|u_n(r)\|^2dr\right)+C(t-s)\sup_{t\in[0,T]}\|\theta_n(t)\|^2\left(\int^t_s|\Delta \theta_n(r)|^2dr\right),
\end{eqnarray*}
hence, using (\ref{eq-27})-(\ref{eq-28}) and (\ref{eq-49}), for $\alpha\in (0,\frac{1}{2})$, it gives that
\begin{eqnarray*}\notag
\|J^3_n\|^2_{W^{\alpha,2}([0,T];(H^2)')}
\leq L_3(\alpha).
\end{eqnarray*}

Utilizing (\ref{eq-13}), for $t>s$, we deduce that
\begin{eqnarray*}
\|J^4_n(t)-J^4_n(s)\|^2_{(H^2)'}&=&\|\int^t_sf_n(\theta_n(r))dr \|^2_{(H^2)'}\\
&\leq& C(\int^t_s|f_n(\theta_n(r))|dr)^2\\
&\leq& C(t-s)(\int^t_s|f_n(\theta_n(r))|^2dr)\\
&\leq&  C(t-s)[\int^t_s(1+|\theta_n(r)|^{4N+2}_{L^{4N+2}})dr]\\
&\leq&  C(t-s)[\int^t_s(1+\|\theta_n(r)\|^{4N+2}_{H^1})dr]\\
&\leq&  C(t-s)^2+C(t-s)\int^t_s\|\theta_n(r)\|^{4N+2}_{H^1}dr\\
&\leq&  C(t-s)^2+C(t-s)^2\sup_{t\in [0,T]}\|\theta_n(t)\|^{4N+2}_{H^1},
\end{eqnarray*}
where for any $N\in \mathbb{N}^+$, $H^1(\mathbb{O})\hookrightarrow L^{4N+2}(\mathbb{O})$ is used.
By (\ref{eq-49}), for $\alpha\in (0,\frac{1}{2})$, we deduce that
\begin{eqnarray*}\notag
\|J^4_n\|^2_{W^{\alpha,2}([0,T];(H^2)')}
\leq L_4(\alpha).
\end{eqnarray*}
Therefore, collecting all the above estimates, it gives (\ref{eq-51}).

Based on (\ref{eq-50})-(\ref{eq-51}), applying Lemma \ref{lem-5}, we conclude that $u_n$ is compact in $L^2([0,T]; \mathbb{H})\cap C([0,T];\mathbb{V}')$ and $\theta_n$ is compact in $L^2([0,T]; H^1)\cap C([0,T];(H^2)')$.
Moreover, using (\ref{eq-27})-(\ref{eq-28}), we deduce that there exists $(\hat{u},\hat{\theta})$ and a subsequence still denoted by $(u_n,\theta_n)$ such that
\begin{description}
  \item[1.]$\hat{u}\in L^2([0,T]; \mathbb{H})\cap C([0,T];(\mathbb{V})')\cap L^{\infty}([0,T];\mathbb{H})\cap L^2([0,T];\mathbb{V})$,
  \item[2.]$\hat{\theta}\in L^2([0,T]; H^1)\cap C([0,T];(H^2)')\cap L^{\infty}([0,T];H^1)\cap L^2([0,T];H^2)$,
  \item[3.]$u_n\rightarrow \hat{u}$  \ weakly\  star \ in\ $L^{\infty}([0,T];\mathbb{H})$,\quad\quad
$u_n \rightarrow \hat{u}$ \ strongly \ in\ $L^2([0,T]; \mathbb{H})$,

  \item[4.]$u_n \rightarrow \hat{u} $ \ weakly \ in\ $L^2([0,T];\mathbb{V})$,\quad\quad $u_n \rightarrow \hat{u}$ \ strongly \ in\ $C([0,T];(\mathbb{V})')$.
  \item[5.]$\theta_n\rightarrow \hat{\theta}$  \ weakly\  star \ in\ $L^{\infty}([0,T];H^1)$,\quad\quad
$\theta_n \rightarrow \hat{\theta}$ \ strongly \ in\ $L^2([0,T]; H^1)$,
\item[6.]$\theta_n\rightarrow \hat{\theta} $ \ weakly \ in\ $L^2([0,T];H^2)$,\quad\quad $\theta_n \rightarrow \hat{\theta}$ \ strongly \ in\ $C([0,T];(H^2)')$.
\end{description}
Next, we need to show $(\hat{u},\hat{\theta})$ is the unique solution of (\ref{eq-21})-(\ref{eq-22}). We will use the same method as \cite{Y-Z-Z}.

Let $\psi$ be a continuously differential function defined on $[0,T]$ with $\psi(T)=0$. Recall $\{\varrho_j\}_{j\geq 1}$ is an orthonormal eigenfunction of $\mathbb{H}$, which can be viewed as an orthonormal eigenfunction of $\mathbb{V}$. Multiplying (\ref{eq-25})
by $\psi(t)\varrho_j$ and using integration by parts, we obtain
\begin{eqnarray*}
&&-\int^T_0\langle u_n(t), \psi'(t)\varrho_j\rangle dt+\int^T_0\langle u_n(t),\psi(t)A_1\varrho_j\rangle dt\\
&=&\langle P_n u_0, \psi(0)\varrho_j\rangle -\int^T_0\langle P_n B_n(u_n(t)),\psi(t)\varrho_j\rangle dt -\int^T_0\langle P_n M_n(\theta_n(t)),\psi(t)\varrho_j\rangle dt\\
&& \quad
+\int^T_0\langle P_n\int_{\mathbb{X}}G(t, u_n(t),v)(g(t,v)-1)\vartheta(dv), \psi(t)\varrho_j\rangle dt.
\end{eqnarray*}
For every $n>\sup_{m\in \mathbb{N}^+}\sup_{t\in[0,T]}|u_m(t)|^2_{\mathbb{H}}\vee \sup_{m\in \mathbb{N}^+}\sup_{t\in[0,T]}|\theta_m(t)|^2_{L^2}\vee j$, we have
\begin{eqnarray*}
&&-\int^T_0\langle u_n(t), \psi'(t)\varrho_j\rangle dt+\int^T_0\langle u_n(t),\psi(t) A_1\varrho_j\rangle dt\\
&=&\langle u_0, \psi(0)\varrho_j\rangle -\int^T_0\langle B(u_n(t)),\psi(t)\varrho_j\rangle dt -\int^T_0\langle M(\theta_n(t)),\psi(t)\varrho_j\rangle dt\\
&& \quad
+\int^T_0\langle \int_{\mathbb{X}}G(t, u_n(t),v)(g(t,v)-1)\vartheta(dv), \psi(t)\varrho_j\rangle dt.
\end{eqnarray*}
Denote the above equality by symbols
 \[
J_1(T)+J_2(T)=J_3+J_4(T)+J_5(T)+J_6(T).
\]
Since $u_n \rightarrow \hat{u}$ \ strongly \ in\ $C([0,T];\mathbb{V}')$, we have
\[
J_1(T)\rightarrow -\int^T_0\langle \hat{u}(t), \psi'(t)\varrho_j\rangle dt.
\]
With the aid of Cauchy-Schwarz inequality and $u_n \rightarrow \hat{u}$  strongly  in $L^2([0,T]; \mathbb{H})$, we get
\[
J_2(T)\rightarrow \int^T_0\langle\hat{u}(t),\psi(t) A_1 \varrho_j\rangle dt.
\]
 By the triangle inequality and (\ref{eq-3}), we have
\begin{eqnarray*}
&&\Big|\int^T_0\langle B(u_n(t)),\psi(t)\varrho_j\rangle dt-\int^T_0\langle B(\hat{u}(t)),\psi(t)\varrho_j\rangle dt\Big|\\
&\leq & \int^T_0|\langle B(u_n(t)-\hat{u}(t), u_n(t)),\psi(t)\varrho_j\rangle|dt+\int^T_0|\langle B(\hat{u}(t), u_n(t)-\hat{u}(t)),\psi(t)\varrho_j\rangle|dt\\
&\leq & 2\int^T_0| u_n(t)-\hat{u}(t)|(| u_n(t)|+|u(t)|)\|\psi(t)\varrho_j\|_{\mathbb{V}}dt\\
&\leq & 2(\int^T_0| u_n(t)-\hat{u}(t)|^2dt)^{\frac{1}{2}}\sup_{t\in [0,T]}(| u_n(t)|+|u(t)|)|\psi(t)|,
\end{eqnarray*}
hence,
\[
J_4(T)\rightarrow \int^T_0\langle B(\hat{u}(t)),\psi(t)\varrho_j\rangle  dt.
\]
Using (\ref{eq-4}), we get
\begin{eqnarray*}
&&|\int^T_0\langle M(\theta_n(t)),\psi(t)\varrho_j\rangle dt-\int^T_0\langle M(\hat{\theta}(t)),\psi(t)\varrho_j\rangle dt|\\
&\leq & \int^T_0|\langle M(\theta_n(t)-\hat{\theta}(t), \theta_n(t)),\psi(t)\varrho_j\rangle|dt+\int^T_0|\langle M(\hat{\theta}(t), \theta_n(t)-\hat{\theta}(t)),\psi(t)\varrho_j\rangle|dt\\
&\leq & \int^T_0(\|\theta_n(t)-\hat{\theta}(t)\|^{\frac{1}{2}}|\Delta(\theta_n(t)-\hat{\theta}(t))|^{\frac{1}{2}}\|\theta_n(t)\|^{\frac{1}{2}}|\Delta \theta_n(t)|^{\frac{1}{2}})\|\psi(t)\varrho_j\|_{\mathbb{V}}dt\\
&&\quad +\int^T_0(\|\theta_n(t)-\hat{\theta}(t)\|^{\frac{1}{2}}|\Delta(\theta_n(t)-\hat{\theta}(t))|^{\frac{1}{2}}\|\hat{\theta}(t)\|^{\frac{1}{2}}| \Delta \hat{\theta}(t)|^{\frac{1}{2}})\|\psi(t)\varrho_j\|_{\mathbb{V}}dt\\
&\leq & \sup_{t\in[0,T]}|\psi(t)|(\int^T_0 \|\theta_n-\hat{\theta}\|^2ds)^{\frac{1}{4}}[(\int^T_0 |\Delta \theta_n|^2ds)^{\frac{1}{4}}+(\int^T_0 |\Delta \hat{\theta}|^2ds)^{\frac{1}{4}}][T^{\frac{1}{2}}\sup_{t\in[0,T]}\|\theta_n\|^{\frac{1}{2}}(\int^T_0 |\Delta \theta_n|^2ds)^{\frac{1}{4}}]\\
&&\ +\sup_{t\in[0,T]}|\psi(t)|(\int^T_0 \|\theta_n-\hat{\theta}\|^2ds)^{\frac{1}{4}}[(\int^T_0 |\Delta \theta_n|^2ds)^{\frac{1}{4}}+(\int^T_0 |\Delta \hat{\theta}|^2ds)^{\frac{1}{4}}][T^{\frac{1}{2}}\sup_{t\in[0,T]}\|\hat{\theta}\|^{\frac{1}{2}}(\int^T_0 |\Delta \hat{\theta}|^2ds)^{\frac{1}{4}}].
\end{eqnarray*}
With the help of (\ref{eq-28}) and $\theta_n \rightarrow \hat{\theta}$ \ strongly \ in\ $L^2([0,T]; H^1)$, we conclude that
\[
J_5(T)\rightarrow \int^T_0\langle M(\hat{\theta}(t)),\psi(t)\varrho_j\rangle dt.
\]

The proof of $J_6(T)\rightarrow\int^T_0\langle \int_{\mathbb{X}}G(t, \hat{u}(t),v)(g(t,v)-1)\vartheta(dv), \psi(t)\varrho_j\rangle dt$ is the same as (4.25) in \cite{Y-Z-Z}, we omit it.
Based on the above steps, we conclude that for any $j\geq 1$,
\begin{eqnarray}\notag
&&-\int^T_0\langle \hat{u}(t), \psi'(t)\varrho_j\rangle dt+\int^T_0\langle \hat{u}(t),\psi(t) A_1\varrho_j\rangle dt\\ \notag
&=&\langle u_0, \psi(0)\varrho_j\rangle -\int^T_0\langle B(\hat{u}(t)),\psi(t)\varrho_j\rangle dt -\int^T_0\langle M(\hat{\theta}(t)),\psi(t)\varrho_j\rangle dt\\
\label{eq-53}
&& \quad
+\int^T_0\langle \int_{\mathbb{X}}G(t, \hat{u}(t),v)(g(t,v)-1)\vartheta(dv), \psi(t)\varrho_j\rangle dt.
\end{eqnarray}
Actually, (\ref{eq-53}) holds for any $\zeta$, which is a finite linear combination of $\varrho_j$. That is

\begin{eqnarray}\notag
&&-\int^T_0\langle \hat{u}(t), \psi'(t)\zeta\rangle dt+\int^T_0\langle \hat{u}(t),\psi(t) A_1\zeta\rangle dt\\ \notag
&=&\langle u_0, \psi(0)\zeta\rangle -\int^T_0\langle B(\hat{u}(t)),\psi(t)\zeta\rangle dt -\int^T_0\langle M(\hat{\theta}(t)),\psi(t)\zeta\rangle dt\\
\label{eq-54}
&& \quad
+\int^T_0\langle \int_{\mathbb{X}}G(t, \hat{u}(t),v)(g(t,v)-1)\vartheta(dv), \psi(t)\zeta\rangle dt.
\end{eqnarray}
Since $\mathbb{V}$ is dense in $\mathbb{H}$, we get
\begin{eqnarray}\label{eq-55}
d\hat{u}(t)+A_1 \hat{u}(t)dt+B(\hat{u}(t))dt+ M(\hat{\theta}(t))dt=\int_{\mathbb{X}}G(t, \hat{u}(t),v)(g(t,v)-1)\vartheta(dv)dt
\end{eqnarray}
holds as an equality in distribution in $L^2([0,T];\mathbb{H}')$.

 Finally, it remains to prove $\hat{u}(0)={u}_0$. Multiplying (\ref{eq-55}) with the same $\psi(t)$ as above and integrating with respect to $t$. By integration by parts, we have
\begin{eqnarray}\notag
&&-\int^T_0\langle \hat{u}(t), \psi'(t)\zeta\rangle dt+\int^T_0\langle \hat{u}(t),\psi(t) A_1\zeta\rangle dt\\ \notag
&=&\langle \hat{u}_0, \psi(0)\zeta\rangle -\int^T_0\langle B(\hat{u}(t)),\psi(t)\zeta\rangle dt -\int^T_0\langle M(\hat{\theta}(t)),\psi(t)\zeta\rangle dt\\
\label{eq-56}
&& \quad
+\int^T_0\langle \int_{\mathbb{X}}G(t, \hat{u}(t),v)(g(t,v)-1)\vartheta(dv), \psi(t)\zeta\rangle dt.
\end{eqnarray}
 By comparison with (\ref{eq-54}), it gives $\langle u_0- \hat{u}_0, \psi(0)\zeta\rangle=0, \forall \zeta\in \mathbb{V}$. Choosing $\psi$ such that $\psi(0)\neq 0$, then
\[
(\hat{u}(0)-{u}_0,\zeta)=0,\  \forall \zeta\in  \mathbb{V}.
\]
Since $ \mathbb{V}$ is dense in $ \mathbb{H}$, we have $\hat{u}(0)=u_0$.

Using the same method as above, we can obtain the following equality holds
\begin{eqnarray*}
&&-\int^T_0\langle \hat{\theta}(t), \psi'(t)\varsigma_j\rangle dt+\int^T_0\langle \hat{\theta}(t),\psi(t)A_2 \varsigma_j\rangle dt\\
&=&\langle  \hat{\theta}_0, \psi(0)\varsigma_j\rangle -\int^T_0\langle \tilde{B}(\hat{u}(t),\hat{\theta}(t)),\psi(t)\varsigma_j\rangle dt -\int^T_0\langle f_n(\hat{\theta}(t)),\psi(t)\varsigma_j\rangle dt.
\end{eqnarray*}
Therefore, $(\hat{u},\hat{\theta})$ satisfies (\ref{eq-21})-(\ref{eq-22}).

(Continuity)\  According to Lemma \ref{lem-4}, we need to show
\[
\frac{d\hat{u}}{dt}\in L^2([0,T];\mathbb{V}'),\quad \frac{d\hat{\theta}}{dt}\in L^2([0,T];(H^2)').
\]
The proof is similar to the proof process of (\ref{eq-50})-(\ref{eq-51}), we omit it. Thus, we obtain
\[
\hat{u}\in C([0,T];\mathbb{H}),\quad \hat{\theta}\in  C([0,T];H^1).
\]

(Uniqueness) \ Assume $(u_1,\theta_1), (u_2,\theta_2)$ are two solutions of  (\ref{eq-21})-(\ref{eq-22}). Let $u=u_1-u_2, \theta=\theta_1-\theta_2$, then $(u_0,\theta_0)=(0,0)$. From (\ref{eq-21})-(\ref{eq-22}), we have
\begin{eqnarray*}
&&du(t)+A_1 u(t)dt+(B(u(t),u_1(t))+B(u_2(t), u(t)))dt
= -(M(\theta(t),\theta_1(t))+M(\theta_2(t),\theta(t)))dt\\ \notag
 &&\quad +\int_{\mathbb{X}}(G(t,u_1(t),v)-G(t,u_2(t),v))(g(t,v)-1)\vartheta(dv)dt,\\
&&d\theta(t)+A_2 \theta(t)dt+(\tilde{B}(u(t),\theta_1(t))+\tilde{B}(u_2(t), \theta(t)))dt
= -(f(\theta_1(t))-f(\theta_2(t)))dt.
\end{eqnarray*}
Define
\[
\Upsilon(t):=\exp\left\{-2\int^t_0(\xi_1(s)+\xi_2(s)+\xi_3(s))ds\right\}, \quad \forall t>0,
\]
where
\begin{eqnarray*}
\xi_1(s)&=&c(\kappa_3)|u_1|^2\|u_1\|^2+C(\kappa_9)\|\theta_1\|^2+C(\kappa_{10},\kappa_{11})\|\theta_1\|^2|\Delta \theta_1|^2,\\
\xi_2(s)&=&c(\kappa_1)+c(\kappa_2)\beta(\theta_1,\theta_2),\\
\xi_3(s)&=&c(\kappa_4,\kappa_5)\|\theta_2\|^2|\Delta \theta_2|^2+c(\kappa_6,\kappa_8)\|\theta_1\|^2|\Delta \theta_1|^2+c(\kappa_7)|u_2|^2\|u_2\|^2+c(\kappa_2)\beta(\theta_1,\theta_2),
\end{eqnarray*}
with
\[
\beta(\theta_1,\theta_2)=C(1+|\theta_1|^{2N}_{L^{4N+2}}+|\theta_2|^{2N}_{L^{4N+2}})^2.
\]

Using Lemma \ref{lem-2}, we get
\begin{eqnarray}\label{eq-57}
d[\Upsilon(t)|\theta(t)|^2]=-2\Upsilon(t)[\|\theta(t)\|^2+\langle \tilde{B}(u(t),\theta_1(t))+f(\theta_1(t))-f(\theta_2(t)), \theta(t)\rangle]dt+\Upsilon'(t)|\theta(t)|^2dt,
\end{eqnarray}
and
\begin{eqnarray}\notag
d[\Upsilon(t)\|\theta(t)\|^2]&=&2\Upsilon(t)[-|\Delta \theta(t)|^2+\langle \tilde{B}(u(t),\theta_1(t))+\tilde{B}(u_2(t),\theta(t))+f(\theta_1(t))-f(\theta_2(t)), \Delta \theta(t)\rangle]dt\\
\label{eq-58}
&&\ +\Upsilon'(t)\|\theta(t)\|^2dt.
\end{eqnarray}
By Lemma \ref{lem-1}, we obtain
\begin{eqnarray}\notag
&&d[\Upsilon(t)|u(t)|^2]\\ \notag
&=&-2\Upsilon(t)[\|u(t)\|^2+\langle {B}(u(t),u_1(t))+M(\theta(t),\theta_1(t))+ M(\theta_2(t),\theta(t)), u(t)\rangle]dt\\
\label{eq-59}
&&\ +2\Upsilon(t)\langle\int_{\mathbb{X}}(G(t,u_1(t),v)-G(t,u_2(t),v))(g(t,v)-1)\vartheta(dv), u(t)\rangle dt+\Upsilon'(t)|u(t)|^2dt.
\end{eqnarray}
Referring to (9.13) in \cite{B-M-P}, it gives
\begin{eqnarray*}
|\langle B(u,u_1),u\rangle|&\leq & k_3\|u\|^2+C(\kappa_3)|u_1|^2\|u_1\|^2|u|^2,\\
|\langle M(\theta_2, \theta), u\rangle|&\leq & \kappa_4\|u\|^2+\kappa_5|\Delta \theta|^2+C(\kappa_4,\kappa_5)\|\theta_2\|^2|\Delta \theta_2|^2\|\theta\|^2,\\
|\langle M(\theta, \theta_1), u\rangle|&\leq & \kappa_8\|u\|^2+\kappa_6|\Delta \theta|^2+C(\kappa_6,\kappa_8)\|\theta_1\|^2|\Delta \theta_1|^2\|\theta\|^2,\\
|\langle \tilde{B}(u_2,\theta),\Delta \theta\rangle|&\leq & \kappa_7 |\Delta \theta|^2+C(\kappa_7)|u_2|^2\|u_2\|^2\|\theta\|^2,\\
|\langle \tilde{B}(u, \theta_1),  \theta\rangle|&\leq & \kappa_9|\Delta \theta|^2+C(\kappa_9)|u|^2\|\theta_1\|^2,\\
|\langle \tilde{B}(u,\theta_1), \Delta \theta\rangle|&\leq & \kappa_{10}|\Delta \theta|^2+\kappa_{11}\|u\|^2+C(\kappa_{10},\kappa_{11})|u|^2\|\theta_1\|^2|\Delta \theta_1|^2.
\end{eqnarray*}
Adding up (\ref{eq-57}),(\ref{eq-58}) and (\ref{eq-59}), then using the above estimates and (\ref{eq-10})-(\ref{eq-11}), we get
\begin{eqnarray*}
&&d[\Upsilon(t)(|u(t)|^2+|\theta(t)|^2+\|\theta(t)\|^2)]+2\Upsilon(t)[\|u(t)\|^2+\|\theta(t)\|^2+|\Delta\theta(t)|^2]dt\\
&\leq& 2\Upsilon(t)[\xi_1(t)|u(t)|^2+\xi_2(t)|\theta(t)|^2+\xi_3(t)\|\theta(t)\|^2]dt\\
&&\ +2\Upsilon(t)[L_1\|u(t)\|^2+L_2\|\theta(t)\|^2+L_3|\Delta\theta(t)|^2]dt\\
&&\ +2\Upsilon(t)|u(t)|^2\int_{\mathbb{X}}|G(t,v)|_{1,\mathbb{H}}|g(t,v)-1|\vartheta(dv)dt\\
&&\ +\Upsilon'(t)[|u(t)|^2+|\theta(t)|^2+\|\theta(t)\|^2]dt,
\end{eqnarray*}
where
\[
L_1=\kappa_3+\kappa_4+\kappa_8+\kappa_{11},\quad L_2=\kappa_1,\quad L_3=\kappa_2+\kappa_5+\kappa_6+\kappa_{7}+\kappa_9+\kappa_{10}.
\]
Choosing $\kappa_3=\kappa_4=\kappa_8=\kappa_{11}=\frac{1}{8}$,  $\kappa_1=\frac{1}{2}$ and $\kappa_2=\kappa_5=\kappa_6=\kappa_{7}=\kappa_9=\kappa_{10}=\frac{1}{12}$, we have
\begin{eqnarray*}
&&d[\Upsilon(t)(|u(t)|^2+|\theta(t)|^2+\|\theta(t)\|^2)]+\Upsilon(t)[\|u(t)\|^2+\|\theta(t)\|^2+|\Delta\theta(t)|^2]dt\\
&\leq& 2\Upsilon(t)[\xi_1(t)|u(t)|^2+\xi_2(t)|\theta(t)|^2+\xi_3(t)\|\theta(t)\|^2]dt\\
&&\ +2\Upsilon(t)|u(t)|^2\int_{\mathbb{X}}|G(t,v)|_{1,\mathbb{H}}|g(t,v)-1|\vartheta(dv)dt\\
&&\ +\Upsilon'(t)[|u(t)|^2+|\theta(t)|^2+\|\theta(t)\|^2]dt.
\end{eqnarray*}
By the choice of $\Upsilon(t)$, we deduce that
\[
2\Upsilon(t)[\xi_1(t)|u(t)|^2+\xi_2(t)|\theta(t)|^2+\xi_3(t)\|\theta(t)\|^2]+\Upsilon'(t)[|u(t)|^2+|\theta(t)|^2+\|\theta(t)\|^2]\leq0.
\]
Hence, we conclude that
\begin{eqnarray*}
&&d[\Upsilon(t)(|u(t)|^2+|\theta(t)|^2+\|\theta(t)\|^2)]+\Upsilon(t)[\|u(t)\|^2+\|\theta(t)\|^2+|\Delta\theta(t)|^2]dt\\
&\leq& 2[\Upsilon(t)(|u(t)|^2+|\theta(t)|^2+\|\theta(t)\|^2)]\int_{\mathbb{X}}|G(t,v)|_{1,\mathbb{H}}|g(t,v)-1|\vartheta(dv)dt.
\end{eqnarray*}
Applying Gronwall inequality to the above inequality and using (\ref{eq-46}), we obtain the uniqueness. Up to now, we complete the proof of Theorem \ref{thm-2}.

\end{proof}

\section{Large deviations}
This section is devoted to the proof of the main result.
According to Theorem \ref{thm-1}, we need to prove (i) and (ii) in \textbf{Condition A}.

Firstly, we prove (i) in \textbf{Condition A}. For $g\in S$, from  Theorem \ref{thm-2}, we can define
\begin{eqnarray*}
\mathcal{G}^0(\vartheta^g_T)=(u^g,\theta^g).
\end{eqnarray*}
\begin{prp}\label{prp-1}
For any $M\in \mathbb{N}^+$, and $\{g_n\}_{n\geq 1}\subset S^M, g\in S^M$ satisfying $g_n\rightarrow g$ as $n\rightarrow \infty$. Then
\begin{eqnarray*}
\mathcal{G}^0(\vartheta^{g_n}_T)\rightarrow \mathcal{G}^0(\vartheta^{g}_T) \ in\ C([0,T];\mathbb{H})\times C([0,T];H^1).
\end{eqnarray*}
\end{prp}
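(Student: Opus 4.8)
The plan is to use the standard compactness-plus-uniqueness scheme and then to upgrade the mode of convergence by a difference estimate.

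\textbf{Step 1 (uniform bounds and compactness).} Since each $g_n\in S^M$, Theorem~\ref{thm-2}, together with the regularity bounds derived in its proof (cf. (\ref{eq-49})--(\ref{eq-51})), shows that $\{(u^{g_n},\theta^{g_n})\}_{n\ge1}$ is bounded, uniformly in $n$, in $L^\infty([0,T];\mathbb{H})\cap L^2([0,T];\mathbb{V})\cap W^{\alpha,2}([0,T];\mathbb{V}')$ and in $L^\infty([0,T];H^1)\cap L^2([0,T];H^2)\cap W^{\alpha,2}([0,T];(H^2)')$, for some $\alpha\in(0,\tfrac12)$. By Lemma~\ref{lem-5}, $\{u^{g_n}\}$ is relatively compact in $L^2([0,T];\mathbb{H})\cap C([0,T];\mathbb{V}')$ and $\{\theta^{g_n}\}$ in $L^2([0,T];H^1)\cap C([0,T];(H^2)')$; passing to a subsequence I extract a limit $(\bar u,\bar\theta)$ together with the usual weak-$\ast$ and weak limits in the stronger spaces, just as in the existence part of the proof of Theorem~\ref{thm-2}.

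\textbf{Step 2 (passage to the limit and identification).} I pass to the limit in (\ref{eq-21})--(\ref{eq-22}) along this subsequence. The terms involving $A_1,A_2,B,\tilde B,M,f$ are handled verbatim as in the proof of Theorem~\ref{thm-2}, via (\ref{eq-3}), (\ref{eq-4}), Lemma~\ref{lem-1} and Lemma~\ref{lem-2}. The new term is $\int_{\mathbb{X}}G(s,u^{g_n}(s),v)(g_n(s,v)-1)\vartheta(dv)$, which I split as
\[
\int_{\mathbb{X}}\!\big[G(s,u^{g_n}(s),v)-G(s,\bar u(s),v)\big](g_n-1)\,\vartheta(dv)\;+\;\int_{\mathbb{X}}\!G(s,\bar u(s),v)(g_n-1)\,\vartheta(dv).
\]
For the first piece, Hypothesis H0(B) bounds the integrand by $|G(s,v)|_{1,\mathbb{H}}\,|u^{g_n}(s)-\bar u(s)|_{\mathbb{H}}\,|g_n(s,v)-1|$, and its time integral tends to $0$ by splitting $[0,T]$ into $\{|u^{g_n}-\bar u|_{\mathbb{H}}>\delta\}$ and its complement, using $u^{g_n}\to\bar u$ in $L^2([0,T];\mathbb{H})$, the uniform $L^\infty$ bound on $u^{g_n}$, the bound (\ref{eq-46}) and the equi-integrability in Lemma~\ref{lem-6}(ii), then letting $\delta\downarrow0$. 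For the second piece the integrand no longer depends on $n$ through the state; testing against the basis $\{\varrho_k\}$ and noting that $h_k(s,v):=\langle G(s,\bar u(s),v),\varrho_k\rangle$ satisfies the hypotheses of Lemma~\ref{lem-8} (square-integrability from Hypothesis H0(B) and $\sup_t|\bar u(t)|_{\mathbb{H}}<\infty$; exponential integrability from Hypothesis H1 via Young's inequality), one obtains convergence of $\int_0^t\int_{\mathbb{X}}h_k(g_n-1)\vartheta(dv)ds$ to the corresponding integral with $g$, uniformly in $t$ by equicontinuity (again Lemma~\ref{lem-6}(ii)). Hence $(\bar u,\bar\theta)$ solves (\ref{eq-21})--(\ref{eq-22}), and by the uniqueness in Theorem~\ref{thm-2}, $(\bar u,\bar\theta)=(u^g,\theta^g)$. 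The limit being independent of the subsequence, the whole sequence $(u^{g_n},\theta^{g_n})$ converges to $(u^g,\theta^g)$ in $L^2([0,T];\mathbb{H})\cap C([0,T];\mathbb{V}')$ and in $L^2([0,T];H^1)\cap C([0,T];(H^2)')$; in particular $u^{g_n}\to u^g$ in $L^2([0,T];\mathbb{H})$.

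\textbf{Step 3 (upgrade to $C([0,T];\mathbb{H})\times C([0,T];H^1)$).} Set $w_n:=u^{g_n}-u^g$ and $\eta_n:=\theta^{g_n}-\theta^g$, so $w_n(0)=\eta_n(0)=0$, and re-run the weighted energy computation from the uniqueness part of the proof of Theorem~\ref{thm-2}, with weight $\Upsilon_n(t)$ built from the coefficients associated with $(u^{g_n},\theta^{g_n})$ and $(u^g,\theta^g)$; these are uniformly integrable over $n$ by (\ref{eq-23})--(\ref{eq-24}), (\ref{eq-49}), the embedding $H^1\hookrightarrow L^{4N+2}$ and (\ref{eq-46}). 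Every term is absorbed exactly as in the uniqueness proof except the single new error term $2\int_0^t\Upsilon_n(s)\langle\rho_n(s),w_n(s)\rangle\,ds$, with $\rho_n(s):=\int_{\mathbb{X}}G(s,u^g(s),v)(g_n(s,v)-g(s,v))\vartheta(dv)$, which is not of Gronwall type. I bound it by $2\int_0^T|\rho_n(s)|_{\mathbb{H}}|w_n(s)|_{\mathbb{H}}\,ds$ and show it tends to $0$: on $\{|w_n|_{\mathbb{H}}\le\delta\}$ it is $\le 2\delta\sup_n\int_0^T|\rho_n(s)|_{\mathbb{H}}\,ds\le\delta\,C^M$ (using $|\rho_n(s)|_{\mathbb{H}}\le(1+\sup_t|u^g(t)|_{\mathbb{H}})\int_{\mathbb{X}}|G(s,v)|_{0,\mathbb{H}}(|g_n-1|+|g-1|)\vartheta(dv)$ and (\ref{eq-46})), while on $\{|w_n|_{\mathbb{H}}>\delta\}$, whose Lebesgue measure tends to $0$ by Step~2, one uses the uniform $L^\infty$ bound on $w_n$ and the equi-integrability of $\{|\rho_n|_{\mathbb{H}}\}$ (Lemma~\ref{lem-6}(ii)); letting $\delta\downarrow0$ gives the claim. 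Gronwall's inequality then yields $\sup_{t\in[0,T]}\big(|w_n(t)|_{\mathbb{H}}^2+|\eta_n(t)|_{L^2}^2+\|\eta_n(t)\|^2\big)\le C^M\int_0^T|\rho_n(s)|_{\mathbb{H}}|w_n(s)|_{\mathbb{H}}\,ds\to0$, which is precisely the asserted convergence.

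The main obstacle is the control term $\int_{\mathbb{X}}G(s,u^{g_n}(s),v)(g_n-1)\vartheta(dv)$, in which both the state $u^{g_n}$ and the density $g_n$ vary: the only convergence available for $g_n\to g$ is the weak-type statement of Lemma~\ref{lem-8}, valid for a fixed integrand, so the state-dependence must first be decoupled via the Lipschitz bound of Hypothesis H0(B) and the uniform-integrability estimates of Lemma~\ref{lem-6} must then be invoked; correspondingly, in the passage from $L^2$- to $C([0,T])$-convergence, the residual term $\rho_n$ is not of Gronwall type and can only be controlled because the $L^2$-convergence of $w_n$ has already been secured in Step~2. The remaining work — the energy inequality for the difference — is a routine repetition of the uniqueness argument of Theorem~\ref{thm-2}.
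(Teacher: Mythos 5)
Your proposal is correct and follows essentially the same three‑stage route as the paper: (i) uniform bounds for $\{(u^{g_n},\theta^{g_n})\}$ plus compactness via Lemma~\ref{lem-5}; (ii) identification of the subsequential limit as $(u^g,\theta^g)$ by passing to the limit in the weak formulation and invoking uniqueness; (iii) upgrading to uniform‑in‑time convergence by re‑running the weighted energy estimate from the uniqueness argument of Theorem~\ref{thm-2}. The only difference is that the paper delegates the key technical points — the convergence of the control term $\int_{\mathbb{X}}G(s,u^{g_n},v)(g_n-1)\vartheta(dv)$ in the identification step, and the treatment of the residual $\rho_n$ in the Gronwall step — to Proposition~4.1 and to the estimate of $I_5^{m'}$ in \cite{Z-Z}, whereas you spell them out (the state/control decoupling via Hypothesis~H0(B), the Chebyshev–equi‑integrability interplay via Lemma~\ref{lem-6}(ii), and the use of Lemma~\ref{lem-8} on a frozen integrand); this is exactly what the cited reference does.
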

\begin{proof}
Recall that $\mathcal{G}^0(\vartheta^{g_n}_T)=(u^{g_n},\theta^{g_n})$. For simplicity, denote $(u_n,\theta_n)=(u^{g_n},\theta^{g_n})$.

Using similar method as Theorem \ref{thm-2} and by Lemma \ref{lem-6}, we can prove that
\begin{eqnarray}\label{eq-27-1}
\sup_{n} \left(\sup_{s\in [0,T]}|\theta_n(s)|^p_{L^2}+\int^T_0|\theta_n(s)|^{p-2}_{L^2}(\|\theta_n(s)\|^2+|\theta_n(s)|^{2N+2}_{L^{2N+2}})ds\right)\leq C(p,M),
\end{eqnarray}
for any $p\geq 1$,
\begin{eqnarray}\label{eq-28-1}
\sup_{n}\left( \sup_{s\in [0,T]}(\Psi(\theta_n(s))+|u_n(s)|^2)^p+\Big(\int^T_0(\|u_n(s)\|^2+|\Delta \theta_n(s)-f(\theta_n(s))|^2)ds\Big)^p\right)\leq C(p,M),
\end{eqnarray}
where $\Psi(\theta_n(s)):=\frac{1}{2}\|\theta_n(s))\|^2+\frac{1}{2}\int_{\mathbb{O}}F(|\theta_n(s)|^2)dx$ and $C(p,M)$ is independent of $n$.
Moreover, for $\alpha\in (0,\frac{1}{2})$, there exist $C(\alpha), L(\alpha)$ such that
\begin{eqnarray}\notag
\sup_{n\geq 1}\|u_n\|^2_{W^{\alpha,2}([0,T];\mathbb{V}')}\leq C(\alpha),\quad
\sup_{n\geq 1}\|\theta_n\|^2_{W^{\alpha,2}([0,T];(H^2)')}\leq L(\alpha).
\end{eqnarray}
Hence, we deduce from Lemma \ref{lem-5} that there exists an element $(u,\theta)$ and a subsequence still denoted by $(u_n,\theta_n)$ such that
\begin{description}
  \item[1.]${u}\in L^2([0,T]; \mathbb{H})\cap C([0,T];(\mathbb{V})')\cap L^{\infty}([0,T];\mathbb{H})\cap L^2([0,T];\mathbb{V})$,
  \item[2.]${\theta}\in L^2([0,T]; H^1)\cap C([0,T];(H^2)')\cap L^{\infty}([0,T];H^1)\cap L^2([0,T];H^2)$,
  \item[3.]$u_n\rightarrow {u}$  \ weakly\  star \ in\ $L^{\infty}([0,T];\mathbb{H})$,\quad\quad
$u_n \rightarrow {u}$ \ strongly \ in\ $L^2([0,T]; \mathbb{H})$,

  \item[4.]$u_n \rightarrow {u} $ \ weakly \ in\ $L^2([0,T];\mathbb{V})$,\quad\quad $u_n \rightarrow {u}$ \ strongly \ in\ $C([0,T];(\mathbb{V})')$.
  \item[5.]$\theta_n\rightarrow {\theta}$  \ weakly\  star \ in\ $L^{\infty}([0,T];H^1)$,\quad\quad
$\theta_n \rightarrow {\theta}$ \ strongly \ in\ $L^2([0,T]; H^1)$,
\item[6.]$\theta_n\rightarrow {\theta} $ \ weakly \ in\ $L^2([0,T];H^2)$,\quad\quad $\theta_n \rightarrow {\theta}$ \ strongly \ in\ $C([0,T];(H^2)')$.
\end{description}
We will prove $(u,\theta)=(u^g, \theta^g)$.

Let $\psi$ be a continuously differential function defined on $[0,T]$ with $\psi(T)=0$.  Multiplying $u_n(t)$
by $\psi(t)\varrho_j$ and using integration by parts, for every $n>(\sup_{m\in \mathbb{N}^+}\sup_{t\in[0,T]}|u_m(t)|^2_{\mathbb{H}})\vee (\sup_{m\in \mathbb{N}^+}\sup_{t\in[0,T]}|\theta_m(t)|^2_{L^2})\vee j$, we obtain
\begin{eqnarray*}
&&-\int^T_0\langle u_n(t), \psi'(t)\varrho_j\rangle dt+\int^T_0\langle u_n(t),\psi(t)A_1\varrho_j\rangle dt\\
&=&\langle  u_0, \psi(0)\varrho_j\rangle -\int^T_0\langle B(u_n(t)),\psi(t)\varrho_j\rangle dt -\int^T_0\langle M(\theta_n(t)),\psi(t)\varrho_j\rangle dt\\
&& \quad
+\int^T_0\langle \int_{\mathbb{X}}G(t, u_n(t),v)(g_n(t,v)-1)\vartheta(dv), \psi(t)\varrho_j\rangle dt.
\end{eqnarray*}
Utilizing the same method as Theorem \ref{thm-2}, we deduce that
\begin{eqnarray*}
&&-\int^T_0\langle u_n(t), \psi'(t)\varrho_j\rangle dt+\int^T_0\langle u_n(t),\psi(t)A_1\varrho_j\rangle dt\\
&&\ \ -\langle  u_0, \psi(0)\varrho_j\rangle +\int^T_0\langle B(u_n(t)),\psi(t)\varrho_j\rangle dt +\int^T_0\langle M(\theta_n(t)),\psi(t)\varrho_j\rangle dt\\
& \rightarrow& -\int^T_0\langle u(t), \psi'(t)\varrho_j\rangle dt+\int^T_0\langle u(t),\psi(t)A_1\varrho_j\rangle dt\\
&&\ \ -\langle  u_0, \psi(0)\varrho_j\rangle +\int^T_0\langle B(u(t)),\psi(t)\varrho_j\rangle dt +\int^T_0\langle M(\theta(t)),\psi(t)\varrho_j\rangle dt.
\end{eqnarray*}
For the remain term $\int^T_0\langle \int_{\mathbb{X}}G(t, u_n(t),v)(g_n(t,v)-1)\vartheta(dv), \psi(t)\varrho_j\rangle dt$, referring to Proposition 4.1. in \cite{Z-Z}, it gives that
\begin{eqnarray*}
&&\int^T_0\langle \int_{\mathbb{X}}G(t, u_n(t),v)(g_n(t,v)-1)\vartheta(dv), \psi(t)\varrho_j\rangle dt\\
& \rightarrow &  \int^T_0\langle \int_{\mathbb{X}}G(t, u(t),v)(g(t,v)-1)\vartheta(dv), \psi(t)\varrho_j\rangle dt.
\end{eqnarray*}
Therefore, we conclude that
\begin{eqnarray*}
&&-\int^T_0\langle u(t), \psi'(t)\varrho_j\rangle dt+\int^T_0\langle u(t),\psi(t)A_1\varrho_j\rangle dt\\
&=&\langle  u_0, \psi(0)\varrho_j\rangle -\int^T_0\langle B(u(t)),\psi(t)\varrho_j\rangle dt -\int^T_0\langle M(\theta(t)),\psi(t)\varrho_j\rangle dt\\
&& \quad
+\int^T_0\langle \int_{\mathbb{X}}G(t, u(t),v)(g(t,v)-1)\vartheta(dv), \psi(t)\varrho_j\rangle dt.
\end{eqnarray*}
Similarly, we can obtain
\begin{eqnarray*}
&&-\int^T_0\langle \theta(t), \psi'(t)\varsigma_j\rangle dt+\int^T_0\langle \theta(t),\psi(t)A_2\varsigma_j\rangle dt\\
&=&\langle  \theta_0, \psi(0)\varsigma_j\rangle -\int^T_0\langle \tilde{B}(u(t),\theta(t)),\psi(t)\varsigma_j\rangle dt -\int^T_0\langle f(\theta(t)),\psi(t)\varsigma_j\rangle dt.
\end{eqnarray*}
Thus, we get $(u,\theta)=(u^g,\theta^g)$.

Next, we prove $(u_n, \theta_n)\rightarrow (u,\theta)$ in $C([0,T];\mathbb{H})\times C([0,T];H^1)$.
Let $w_n=u_n-u, r_n=\theta_n-\theta$. Then
\begin{eqnarray*}
&&dw_n+A_1 w_ndt+[B(w_n,u_n)+B(u,w_n)]dt\\
&=&-[M(r_n,\theta_n)+M(\theta,r_n)]dt +\int_{\mathbb{X}}\Big(G(t,u_n(t),v)(g_n(t,v)-1)-G(t,u(t),v)(g(t,v)-1)\Big)\vartheta(dv)dt,
\end{eqnarray*}
and
\begin{eqnarray*}
dr_n+A_2 r_ndt+[\tilde{B}(w_n,\theta_n)+\tilde{B}(u,r_n)]dt+(f(\theta_n)-f(\theta))dt=0.
\end{eqnarray*}
Using the same method as  the 'Uniqueness' part in Theorem \ref{thm-2} and referring to the estimation of $I^{m'}_5$ of (4.54) in \cite{Z-Z}, we obtain
\begin{eqnarray*}
\sup_{t\in[0,T]}(|w_n(t)|^2+|r_n(t)|^2+\|r_n(t)\|^2)=0,
\end{eqnarray*}
which implies the desired result.

\end{proof}

Recall  $\mathcal{G}^{\varepsilon}(\varepsilon {N}^{\varepsilon^{-1}})=(u^{\varepsilon}(\cdot), \theta^{\varepsilon}(\cdot))$.
Let $\varphi_{\varepsilon}\in \mathcal{U}^M$ and $\vartheta_{\varepsilon}=\frac{1}{\varphi_{\varepsilon}}$. The following lemma was proved by Budhiraja et al. \cite{B-D-M}.
\begin{lemma}\label{lem-9}
\begin{eqnarray*}
\mathcal{E}^{\varepsilon}_t(\vartheta_{\varepsilon})&:=&\exp\Big\{\int_{(0,t)\times \mathbb{X}\times [0,\varepsilon^{-1}]}\log(\vartheta_{\varepsilon}(s,x)) \bar{N}(dsdxdr)\\
&&\ +\int_{(0,t)\times \mathbb{X}\times [0,\varepsilon^{-1}]}(-\vartheta_{\varepsilon}(s,x)+1) \bar{\vartheta}_T(dsdxdr)\Big\},
\end{eqnarray*}
is an $\{\bar{\mathcal{F}}_t\}-$martingale. Then
\[
\mathbb{Q}^{\varepsilon}_t(G)=\int_{G}\mathcal{E}^{\varepsilon}_t(\vartheta_{\varepsilon})d \bar{\mathbb{P}},\quad {\rm{for }}\ G\in \mathcal{B}(\bar{\mathbb{M}})
\]
defines a probability measure on $\bar{\mathbb{M}}$.
\end{lemma}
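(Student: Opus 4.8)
The plan is to recognize $\mathcal{E}^{\varepsilon}_{\cdot}(\vartheta_{\varepsilon})$ as the Dol\'{e}ans--Dade stochastic exponential generated by the Poisson random measure $\bar{N}$ and the nonnegative predictable integrand $\vartheta_{\varepsilon}=1/\varphi_{\varepsilon}$, to show it is a true $\{\bar{\mathcal{F}}_t\}$-martingale on $[0,T]$ rather than merely a supermartingale, and then to read off the probability-measure assertion. As stated, this is exactly the content established in \cite{B-D-M}, and the final write-up will simply cite that reference; the argument goes as follows.

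First I would apply the It\^{o} change-of-variables formula for pure-jump semimartingales to $\mathcal{E}^{\varepsilon}_t(\vartheta_{\varepsilon})=\exp\{Y_t\}$, where $Y_t$ denotes the exponent in the statement. Rewriting $Y_t$ as a stochastic integral against the compensated measure $\bar{N}-\bar{\vartheta}_T$ plus a finite-variation correction, one obtains that $\mathcal{E}^{\varepsilon}_t(\vartheta_{\varepsilon})$ solves the linear equation
\begin{eqnarray*}
\mathcal{E}^{\varepsilon}_t(\vartheta_{\varepsilon})=1+\int_{(0,t]\times\mathbb{X}\times[0,\varepsilon^{-1}]}\mathcal{E}^{\varepsilon}_{s-}(\vartheta_{\varepsilon})\,\big(\vartheta_{\varepsilon}(s,x)-1\big)\,\big(\bar{N}-\bar{\vartheta}_T\big)(dsdxdr).
\end{eqnarray*}
Since the integrand is predictable and $\mathcal{E}^{\varepsilon}_{s-}(\vartheta_{\varepsilon})\geq 0$, this displays $\mathcal{E}^{\varepsilon}_{\cdot}(\vartheta_{\varepsilon})$ as a nonnegative local martingale, hence a supermartingale with $\bar{\mathbb{E}}[\mathcal{E}^{\varepsilon}_t(\vartheta_{\varepsilon})]\leq 1$ for all $t\in[0,T]$.

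The crux is to upgrade this to the martingale identity $\bar{\mathbb{E}}[\mathcal{E}^{\varepsilon}_t(\vartheta_{\varepsilon})\mid\bar{\mathcal{F}}_s]=\mathcal{E}^{\varepsilon}_s(\vartheta_{\varepsilon})$. I would condition on the increments of $\bar{N}$ over disjoint time slabs and invoke the independence of Poisson increments together with the Campbell / L\'{e}vy--Khintchine exponential formula: over a slab $(s,t]$ the $\bar{\mathcal{F}}_s$-conditional expectation of $\exp\{\int\log\vartheta_{\varepsilon}\,\bar{N}\}$ equals $\exp\{\int(\vartheta_{\varepsilon}-1)\,\bar{\vartheta}_T\}$, which cancels the finite-variation part of $Y_t-Y_s$ and yields the identity. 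The exponential formula requires integrability of $\vartheta_{\varepsilon}-1$ (and of $\vartheta_{\varepsilon}\log^{+}\vartheta_{\varepsilon}$) against $\bar{\vartheta}_T$ on $(0,T]\times\mathbb{X}\times[0,\varepsilon^{-1}]$, and this is where the constraint $\varphi_{\varepsilon}\in\mathcal{U}^{M}$, i.e. $L_T(\varphi_{\varepsilon})=\int_{\mathbb{X}_T}l(\varphi_{\varepsilon})\,\vartheta_T\leq M$ with $l(r)=r\log r-r+1$, is used, after first reducing to bounded controls via the truncations $\vartheta^{(k)}_{\varepsilon}:=(\vartheta_{\varepsilon}\wedge k)\vee k^{-1}$, for which the martingale property is classical, and then letting $k\to\infty$ with the de la Vall\'{e}e--Poussin uniform-integrability criterion supplied by the entropy bound. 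I expect this to be the main obstacle: $\vartheta_{\varepsilon}=1/\varphi_{\varepsilon}$ is a priori unbounded precisely on the (possibly large) set where $\varphi_{\varepsilon}$ is small, so one must carefully exclude loss of mass in the limit $k\to\infty$; in \cite{B-D-M} this is handled by exactly such a truncation/approximation scheme combined with the finite-cost bound.

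Granting the martingale property, the probability-measure claim is routine. Nonnegativity of $\mathcal{E}^{\varepsilon}_t(\vartheta_{\varepsilon})$ gives $\mathbb{Q}^{\varepsilon}_t(G)\geq 0$ for every $G\in\mathcal{B}(\bar{\mathbb{M}})$; $\sigma$-additivity follows from the monotone convergence theorem applied to the finite measure $\mathcal{E}^{\varepsilon}_t(\vartheta_{\varepsilon})\,d\bar{\mathbb{P}}$; and $\mathbb{Q}^{\varepsilon}_t(\bar{\mathbb{M}})=\bar{\mathbb{E}}[\mathcal{E}^{\varepsilon}_t(\vartheta_{\varepsilon})]=\bar{\mathbb{E}}[\mathcal{E}^{\varepsilon}_0(\vartheta_{\varepsilon})]=1$. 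Hence $\mathbb{Q}^{\varepsilon}_t$ is a probability measure on $(\bar{\mathbb{M}},\mathcal{B}(\bar{\mathbb{M}}))$, which completes the plan.
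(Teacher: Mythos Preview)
Your proposal is correct and aligns with the paper's treatment: the paper does not give its own proof of this lemma but simply attributes it to Budhiraja et al.\ \cite{B-D-M}, exactly as you plan to do in your final write-up. Your sketch of the underlying argument (Dol\'{e}ans--Dade exponential, supermartingale via local martingale, upgrade to true martingale through truncation and the entropy bound $L_T(\varphi_\varepsilon)\leq M$) is the standard one and faithfully reflects the content of \cite{B-D-M}.
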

Since $\varepsilon N^{\varepsilon^{-1}\varphi_{\varepsilon}}$ under $\mathbb{Q}^{\varepsilon}_T$ has the same law as that of $\varepsilon N^{\varepsilon^{-1}}$ under $\bar{\mathbb{P}}$, it follows that
there exists a unique solution to the following controlled stochastic evolution equations $(\tilde{u}^{\varepsilon},\tilde{\theta}^{\varepsilon})$:
\begin{eqnarray}\notag
\tilde{u}^{\varepsilon}(t)&=&u_0-\int^t_0 A_1 \tilde{u}^{\varepsilon}(s)ds-\int^t_0 B(\tilde{u}^{\varepsilon}(s))ds\\ \notag
&& \ -\int^t_0 M(\tilde{\theta}^{\varepsilon}(s))ds+\int^t_0\int_{\mathbb{X}}G(s,\tilde{u}^{\varepsilon}(s),v)(\varepsilon N^{\varepsilon^{-1}\varphi_{\varepsilon}}(dsdv)-\vartheta(dv)ds)\\ \notag
&=& u_0-\int^t_0 A_1 \tilde{u}^{\varepsilon}(s)ds-\int^t_0 B(\tilde{u}^{\varepsilon}(s))ds\\ \notag
&& \ -\int^t_0 M(\tilde{\theta}^{\varepsilon}(s))ds+\int^t_0\int_{\mathbb{X}}G(s,\tilde{u}^{\varepsilon}(s),v)(\varphi_{\varepsilon}(s,v)-1)\vartheta(dv)ds)\\
\label{eq-67}
&&\ +\int^t_0\int_{\mathbb{X}}\varepsilon G(s,\tilde{u}^{\varepsilon}(s),v)( N^{\varepsilon^{-1}\varphi_{\varepsilon}}(dsdv)-\varepsilon^{-1}\varphi_{\varepsilon}(s,v)\vartheta(dv)ds),\\
\label{eq-66}
\tilde{\theta}^{\varepsilon}(t)&=& \theta_0-\int^t_0 A_2 \tilde{\theta}^{\varepsilon}(s)ds-\int^t_0 \tilde{B}(\tilde{u}^{\varepsilon}(s),\tilde{\theta}^{\varepsilon}(s))ds -\int^t_0 f(\tilde{\theta}^{\varepsilon}(s))ds,
\end{eqnarray}
and we have
\begin{eqnarray}\label{eq-63}
\mathcal{G}^{\varepsilon}(\varepsilon N^{\varepsilon^{-1}\varphi_{\varepsilon}})=(\tilde{u}^{\varepsilon},\tilde{\theta}^{\varepsilon}).
\end{eqnarray}
Before proving (ii) in \textbf{Condition A}, we make a priori estimates of $(\tilde{u}^{\varepsilon},\tilde{\theta}^{\varepsilon})$.
\begin{lemma}\label{lem-10}
There exists $\varepsilon_0>0$ such that for any $p\geq 1$,
\begin{eqnarray}\label{eq-61}
\sup_{0<\varepsilon<\varepsilon_0}\Big[\mathbb{E}\sup_{t\in[0,T]}|\tilde{\theta}^{\varepsilon}(t)|^p_{\mathbb{H}}+\mathbb{E}\int^T_0|\tilde{\theta}^{\varepsilon}(s)|^{p-2}_{L^2}(\|\tilde{\theta}^{\varepsilon}(s)\|^2
+|\tilde{\theta}^{\varepsilon}(s)|^{2N+2}_{L^{2N+2}})ds\Big]\leq C(p,M),
\end{eqnarray}
and
\begin{eqnarray}\label{eq-62}
\sup_{0<\varepsilon<\varepsilon_0}\left( \mathbb{E}\sup_{s\in [0,T]}(\Psi(\tilde{\theta}^{\varepsilon}(s))+|\tilde{u}^{\varepsilon}(s)|^2)^p+\mathbb{E}\Big(\int^T_0(\|\tilde{u}^{\varepsilon}(s)\|^2+|\Delta \tilde{\theta}^{\varepsilon}(s)-f(\tilde{\theta}^{\varepsilon}(s))|^2)ds\Big)^p\right)\leq C(p,M),
\end{eqnarray}
where $\Psi(\tilde{\theta}^{\varepsilon}(s)):=\frac{1}{2}\|\tilde{\theta}^{\varepsilon}(s))\|^2+\frac{1}{2}\int_{\mathbb{O}}F(|\tilde{\theta}^{\varepsilon}(s)|^2)dx.$ Moreover, for $\alpha\in (0,\frac{1}{2})$, there exists constants $L(\alpha), R(\alpha)>0$ such that
\begin{eqnarray}\label{eq-60}
\sup_{0<\varepsilon<\varepsilon_0}\mathbb{E}\|\tilde{u}^{\varepsilon}\|_{W^{\alpha,2}([0,T];\mathbb{V}')}\leq L(\alpha),\quad
\sup_{0<\varepsilon<\varepsilon_0}\mathbb{E}\|\tilde{\theta}^{\varepsilon}\|_{W^{\alpha,2}([0,T];(H^2)')}\leq R(\alpha).
\end{eqnarray}
\end{lemma}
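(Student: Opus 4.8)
The plan is to run the arguments of Theorem \ref{thm-2} pathwise or in expectation, the only genuinely new feature being the pure-jump martingale carried by the small noise $\varepsilon\int_0^\cdot\int_{\mathbb{X}}G(s,\tilde u^\varepsilon(s-),v)(N^{\varepsilon^{-1}\varphi_\varepsilon}(dsdv)-\varepsilon^{-1}\varphi_\varepsilon(s,v)\vartheta(dv)ds)$, which for $\varepsilon$ small will be absorbed rather than estimated. First I would dispose of (\ref{eq-61}): since the equation (\ref{eq-66}) for $\tilde\theta^\varepsilon$ contains no noise, for each fixed $\omega$ the computation leading to (\ref{eq-27}) can be repeated verbatim --- apply $\psi(\theta)=\frac1p|\theta|_{L^2}^p$, use $\langle\tilde B(\tilde u^\varepsilon,\tilde\theta^\varepsilon),\tilde\theta^\varepsilon\rangle=0$ from Lemma \ref{lem-2} together with the coercivity (\ref{eq-33}) of $f$, and then Gronwall --- producing $\sup_{t\in[0,T]}|\tilde\theta^\varepsilon(t)|_{L^2}^p\le|\theta_0|_{L^2}^pe^{CT}$ and the stated integral bound, with constants independent of $\varepsilon$ and $\varphi_\varepsilon$; taking expectations is then trivial.

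For (\ref{eq-62}) I would first introduce the localizing stopping times $\tau_R^\varepsilon:=\inf\{t\ge0:|\tilde u^\varepsilon(t)|\vee|\tilde\theta^\varepsilon(t)|_{L^2}\vee\|\tilde\theta^\varepsilon(t)\|_{H^1}\ge R\}\wedge T$, which increase to $T$ a.s. because $(\tilde u^\varepsilon,\tilde\theta^\varepsilon)\in\mathcal{D}([0,T];\mathbb{H})\times C([0,T];H^1)$. Applying the It\^o formula for c\`adl\`ag semimartingales driven by a Poisson random measure to $\phi(\tilde u^\varepsilon)=\frac12|\tilde u^\varepsilon|^2$ and the pathwise chain rule to $\Psi(\tilde\theta^\varepsilon)$, and using the cancellations $\langle B(u),u\rangle=0$ (Lemma \ref{lem-1}) and $\langle\tilde B(u,\theta),-\Delta\theta+f(\theta)\rangle=-\langle M(\theta),u\rangle$ (cf. (\ref{eq-41})) exactly as in the derivation of (\ref{eq-43}), one obtains, for $t\le\tau_R^\varepsilon$, an energy identity in which $\Psi(\tilde\theta^\varepsilon(t))+\frac12|\tilde u^\varepsilon(t)|^2+\int_0^t(\|\tilde u^\varepsilon\|^2+|\Delta\tilde\theta^\varepsilon-f(\tilde\theta^\varepsilon)|^2)ds$ equals $\Psi(\theta_0)+\frac12|u_0|^2+\mathcal{I}_t+\mathcal{M}_t+\mathcal{Q}_t$, with the drift $\mathcal{I}_t=\int_0^t\langle\int_{\mathbb{X}}G(s,\tilde u^\varepsilon,v)(\varphi_\varepsilon-1)\vartheta(dv),\tilde u^\varepsilon\rangle ds$, the martingale $\mathcal{M}_t=\varepsilon\int_0^t\int_{\mathbb{X}}\langle\tilde u^\varepsilon(s-),G\rangle(N^{\varepsilon^{-1}\varphi_\varepsilon}-\varepsilon^{-1}\varphi_\varepsilon\vartheta ds)$, and the jump correction $\mathcal{Q}_t=\frac{\varepsilon^2}{2}\int_0^t\int_{\mathbb{X}}|G(s,\tilde u^\varepsilon(s-),v)|^2N^{\varepsilon^{-1}\varphi_\varepsilon}(dsdv)$. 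The drift is treated as in (\ref{eq-44}): $|\mathcal{I}_t|\le\int_0^t(1+2|\tilde u^\varepsilon|^2)\rho_\varepsilon(s)ds$ with $\rho_\varepsilon(s)=\int_{\mathbb{X}}|G(s,v)|_{0,\mathbb{H}}|\varphi_\varepsilon(s,v)-1|\vartheta(dv)$ and $\int_0^T\rho_\varepsilon(s)ds\le C^M_{0,1}$ by Lemma \ref{lem-6}. For $\mathcal{M}_t$ the Burkholder--Davis--Gundy inequality gives $\mathbb{E}\sup_{t\le\tau}|\mathcal{M}_t|^p\le C_p\,\mathbb{E}([\mathcal{M}]_\tau^{p/2})$ with $[\mathcal{M}]_\tau\le\sup_{s\le\tau}|\tilde u^\varepsilon(s)|^2\cdot\mathcal{Q}_\tau$; after compensating, $\mathcal{Q}_t$ splits into a martingale part and $\varepsilon\int_0^t\int_{\mathbb{X}}|G|^2\varphi_\varepsilon\vartheta(dv)ds$, and since $|G(s,u,v)|^2\le2|G(s,v)|^2_{0,\mathbb{H}}(1+|u|^2)$ and $\int_0^T\int_{\mathbb{X}}|G(s,v)|^2_{0,\mathbb{H}}|\varphi_\varepsilon+1|\vartheta(dv)ds\le C^M_{0,2}$ (Lemma \ref{lem-6}), every noise contribution carries a prefactor $\varepsilon$, the higher moments being controlled by the Burkholder-type maximal inequality for compensated Poisson integrals (Marinelli--R\"ockner) together with Hypothesis H1 and the Young inequality $ab\le l(a)+e^b-1$ under the constraint $L_T(\varphi_\varepsilon)\le M$. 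Raising the identity to the $p$-th power, taking $\sup_{t\le\tau_R^\varepsilon}$ and then $\mathbb{E}$, and inserting a further Young inequality $ab\le\frac{\eta}{2}a^2+\frac1{2\eta}b^2$ in the Burkholder--Davis--Gundy bound, all noise terms appear as $C(\varepsilon,\eta)\,\mathbb{E}\sup_{s\le\tau_R^\varepsilon}(\Psi(\tilde\theta^\varepsilon(s))+|\tilde u^\varepsilon(s)|^2)^p$ with $C(\varepsilon,\eta)\to0$ as $\varepsilon\to0$; fixing $\eta$ and then $\varepsilon_0$ small enough lets one absorb them, and a Gronwall argument against the finite measure $\rho_\varepsilon(s)ds$, followed by $R\to\infty$ via Fatou, yields (\ref{eq-62}) with constants independent of $\varepsilon$. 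From (\ref{eq-62}) the companion bounds $\mathbb{E}(\int_0^T|\Delta\tilde\theta^\varepsilon|^2ds)^p+\mathbb{E}\sup_{t\in[0,T]}\|\tilde\theta^\varepsilon(t)\|_{H^1}^{2p}\le C(p,M)$ --- the analogue of (\ref{eq-49}) --- follow exactly as in Proposition 5.6 of \cite{B-M-P}, using (\ref{eq-13}) and $H^1\hookrightarrow L^q$.

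For (\ref{eq-60}) I would write $\tilde u^\varepsilon(t)=u_0-\int_0^tA_1\tilde u^\varepsilon ds-\int_0^tB(\tilde u^\varepsilon)ds-\int_0^tM(\tilde\theta^\varepsilon)ds+\int_0^t\int_{\mathbb{X}}G(\varphi_\varepsilon-1)\vartheta(dv)ds+J^\varepsilon_t$ with $J^\varepsilon_t=\varepsilon\int_0^t\int_{\mathbb{X}}G(s,\tilde u^\varepsilon(s-),v)(N^{\varepsilon^{-1}\varphi_\varepsilon}-\varepsilon^{-1}\varphi_\varepsilon\vartheta ds)$ (and similarly for $\tilde\theta^\varepsilon$, with no stochastic term). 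The five deterministic pieces are estimated in $\mathbb{E}\|\cdot\|^2_{W^{\alpha,2}([0,T];\mathbb{V}')}$, resp.\ $(H^2)'$, verbatim as in the proof of (\ref{eq-50})--(\ref{eq-51}), now taking expectations and invoking (\ref{eq-61})--(\ref{eq-62}), the analogue of (\ref{eq-49}), the bounds (\ref{eq-3}), (\ref{eq-4}), (\ref{eq-9}), Lemma \ref{lem-2} and Lemma \ref{lem-6}. For $J^\varepsilon$ the isometry for compensated Poisson integrals gives, for $s<t$, $\mathbb{E}\|J^\varepsilon(t)-J^\varepsilon(s)\|^2_{\mathbb{H}}\le\varepsilon\,\mathbb{E}\int_s^t\int_{\mathbb{X}}|G(r,\tilde u^\varepsilon(r-),v)|^2\varphi_\varepsilon\vartheta(dv)dr\le2\varepsilon\,\mathbb{E}\big[\sup_{r\le T}(1+|\tilde u^\varepsilon(r)|^2)\int_s^t\varrho_\varepsilon(r)dr\big]$, where $\varrho_\varepsilon(r)=\int_{\mathbb{X}}|G(r,v)|^2_{0,\mathbb{H}}\varphi_\varepsilon(r,v)\vartheta(dv)$ and $\int_0^T\varrho_\varepsilon(r)dr\le C^M_{0,2}$; since for $\alpha\in(0,\frac12)$ one has $\int_0^T\int_0^T|t-s|^{-1-2\alpha}\big(\int_s^t\varrho_\varepsilon(r)dr\big)ds\,dt\le C(\alpha,T)\int_0^T\varrho_\varepsilon(r)dr$ by Fubini (the inner double integral $\int_0^r\int_r^T|t-s|^{-1-2\alpha}dt\,ds$ being finite precisely when $\alpha<\frac12$), integrating the preceding estimate over $(s,t)$ and using (\ref{eq-61}) gives $\mathbb{E}\|J^\varepsilon\|^2_{W^{\alpha,2}([0,T];\mathbb{H})}\le C(\alpha)$, whence the first bound of (\ref{eq-60}) via $\mathbb{H}\hookrightarrow\mathbb{V}'$ and Jensen; the $(H^2)'$-bound for $\tilde\theta^\varepsilon$ is obtained the same way.

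I expect the main obstacle to be the uniform-in-$\varepsilon$ control of the jump terms $\mathcal{M}_t$ and $\mathcal{Q}_t$ in the $p$-th moment estimate (\ref{eq-62}): one must check that every stochastic contribution genuinely comes with a small prefactor, so that it can be absorbed into the left-hand side instead of merely bounded. This relies on the compensator identity turning each $\varepsilon^2N^{\varepsilon^{-1}\varphi_\varepsilon}$ into $\varepsilon\,\varphi_\varepsilon\vartheta$, on the uniform-in-$g\in S^M$ bounds $C^M_{i,j}$ of Lemma \ref{lem-6} (supplemented, for the higher moments produced by the iterated Burkholder-type estimates, by Hypothesis H1 and the Young inequality involving $l$), and on choosing $\varepsilon_0$ small; once the dangerous terms are absorbed, the conclusion follows by a routine Gronwall argument patterned on Theorem \ref{thm-2}.
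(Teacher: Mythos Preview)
Your proposal is correct and follows essentially the same route as the paper. The paper's own proof is in fact much terser than yours: for (\ref{eq-61})--(\ref{eq-62}) it simply says ``Combining Theorem \ref{thm-2} in our paper and Proposition 5.4--5.5 proved by Brze\'{z}niak et al.\ in \cite{B-M-P}, we can obtain the estimates,'' and for (\ref{eq-60}) it writes out the same six-term decomposition $J^1_\varepsilon,\dots,J^6_\varepsilon$ of $\tilde u^\varepsilon$ (and $K^1_\varepsilon,\dots,K^4_\varepsilon$ of $\tilde\theta^\varepsilon$) that you describe, bounding the deterministic pieces exactly as in the proof of (\ref{eq-50})--(\ref{eq-51}) and deferring the two noise pieces $J^5_\varepsilon,J^6_\varepsilon$ to Lemma 4.2 of \cite{Z-Z}. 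Your It\^o/BDG/absorption sketch for (\ref{eq-62}) and your isometry-plus-Fubini bound for the compensated jump integral in $W^{\alpha,2}([0,T];\mathbb{H})$ are precisely what lies behind those citations, so you have effectively unpacked the paper's references rather than taken a different path.
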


\begin{proof}
Combing Theorem \ref{thm-2} in our paper and Proposition 5.4-5.5 proved by Brze\'{z}niak et al. in \cite{B-M-P}, we can obtain the estimates of (\ref{eq-61}) and (\ref{eq-62}).

Let $\varphi_{\varepsilon}\in \mathcal{U}^M$, we have
\begin{eqnarray*}
\tilde{u}^{\varepsilon}(t)&=& u_0-\int^t_0 A_1 \tilde{u}^{\varepsilon}(s)ds-\int^t_0 B(\tilde{u}^{\varepsilon}(s))ds\\
&& \ -\int^t_0 M(\tilde{\theta}^{\varepsilon}(s))ds+\int^t_0\int_{\mathbb{X}}G(s,\tilde{u}^{\varepsilon}(s),v)(\varphi_{\varepsilon}(s,v)-1)\vartheta(dv)ds)\\
&&\ +\int^t_0\int_{\mathbb{X}}\varepsilon G(s,\tilde{u}^{\varepsilon}(s),v)( N^{\varepsilon^{-1}\varphi_{\varepsilon}}(dsdv)-\varepsilon^{-1}\varphi_{\varepsilon}(s,v)\vartheta(dv)ds)\\
&:=& J^1_{\varepsilon}+J^2_{\varepsilon}+J^3_{\varepsilon}+J^4_{\varepsilon}+J^5_{\varepsilon}+J^6_{\varepsilon},\\
\tilde{\theta}^{\varepsilon}(t)&=& \theta_0-\int^t_0 A_2 \tilde{\theta}^{\varepsilon}(s)ds-\int^t_0 \tilde{B}(\tilde{u}^{\varepsilon}(s),\tilde{\theta}^{\varepsilon}(s))ds -\int^t_0 f(\tilde{\theta}^{\varepsilon}(s))ds\\
&:=& K^1_{\varepsilon}+K^2_{\varepsilon}+K^3_{\varepsilon}+K^4_{\varepsilon}.
\end{eqnarray*}
Clearly,
\begin{eqnarray*}
&&\sup_{0<\varepsilon<\varepsilon_0}\mathbb{E}|J^1_{\varepsilon}|^2_{\mathbb{H}}\leq L_1, \quad \sup_{0<\varepsilon<\varepsilon_0}\mathbb{E}|K^1_{\varepsilon}|^2_{H^1}\leq R_1.
\end{eqnarray*}
By the same method as in the proof of (\ref{eq-50}) and (\ref{eq-51}), we get
\begin{eqnarray*}
\sup_{0<\varepsilon<\varepsilon_0}\mathbb{E}\|J^2_{\varepsilon}\|^2_{W^{\alpha,2}([0,T];\mathbb{V}')}&\leq & CT\sup_{0<\varepsilon<\varepsilon_0}\mathbb{E} \int^T_0\|\tilde{u}^{\varepsilon}(r)\|^2dr\leq L_2(\alpha),\\
\sup_{0<\varepsilon<\varepsilon_0}\mathbb{E}\|K^2_{\varepsilon}\|^2_{W^{\alpha,2}([0,T];(H^2)')}&\leq & CT\sup_{0<\varepsilon<\varepsilon_0}\mathbb{E} \sup_{t\in[0,T]}|\tilde{\theta}^{\varepsilon}(t)|^2\leq R_2(\alpha),
\end{eqnarray*}
and
\begin{eqnarray*}
\sup_{0<\varepsilon<\varepsilon_0}\mathbb{E}\|J^3_{\varepsilon}\|_{W^{\alpha,2}([0,T];\mathbb{V}')}&\leq & CT\sup_{0<\varepsilon<\varepsilon_0}\Big(\mathbb{E}\sup_{t\in[0,T]}|\tilde{u}^{\varepsilon}(t)|^2\Big)^{\frac{1}{2}}\Big(\mathbb{E}\int^t_s\|\tilde{u}^{\varepsilon}(r)\|^2dr\Big)^{\frac{1}{2}}\\
&\leq& L_3(\alpha),\\
\sup_{0<\varepsilon<\varepsilon_0}\mathbb{E}\|K^3_{\varepsilon}\|_{W^{\alpha,2}([0,T];(H^2)')}&\leq &
CT^{\frac{1}{2}}\sup_{0<\varepsilon<\varepsilon_0}\Big(\mathbb{E}\sup_{t\in[0,T]}|\tilde{u}^{\varepsilon}(t)|^2\Big)^{\frac{1}{2}}
\Big(\mathbb{E}\int^t_s\|\tilde{u}^{\varepsilon}(r)\|^2dr\Big)^{\frac{1}{2}}
\\
&&\ +CT^{\frac{1}{2}}\sup_{0<\varepsilon<\varepsilon_0}\Big(\mathbb{E}\sup_{t\in[0,T]}\|\tilde{\theta}^{\varepsilon}(t)\|^2\Big)^{\frac{1}{2}}\Big(\mathbb{E}\int^t_s|\Delta \tilde{\theta}^{\varepsilon}(r)|^2dr\Big)^{\frac{1}{2}}\\
&\leq& R_3(\alpha).
\end{eqnarray*}
Moreover, we get
\begin{eqnarray*}
\sup_{0<\varepsilon<\varepsilon_0}\mathbb{E}\|J^4_{\varepsilon}\|_{W^{\alpha,2}([0,T];\mathbb{V}')}&\leq &
CT^{\frac{1}{2}}\sup_{0<\varepsilon<\varepsilon_0}\Big(\mathbb{E}\sup_{t\in[0,T]}\| \tilde{\theta}^{\varepsilon}(t)\|^2\Big)^{\frac{1}{2}}\Big(\mathbb{E}\int^t_s|\Delta \tilde{\theta}^{\varepsilon}(r)|^2dr\Big)^{\frac{1}{2}}\leq L_4(\alpha),\\
\sup_{0<\varepsilon<\varepsilon_0}\mathbb{E}\|K^4_{\varepsilon}\|^2_{W^{\alpha,2}([0,T];(H^2)')}&\leq &
CT^2+CT^2\sup_{0<\varepsilon<\varepsilon_0}\mathbb{E}\sup_{t\in [0,T]}\|\tilde{\theta}^{\varepsilon}(t)\|^{4N+2}_{H^1}\leq R_4(\alpha),
\end{eqnarray*}
where (\ref{eq-61})-(\ref{eq-62}) are used.

For the remain two terms $J^5_{\varepsilon}$ and $J^6_{\varepsilon}$, referring to Lemma 4.2 in \cite{Z-Z}, we have
\begin{eqnarray*}
\sup_{0<\varepsilon<\varepsilon_0}\mathbb{E}\|J^5_{\varepsilon}\|^2_{W^{1,2}([0,T];\mathbb{H})}\leq L_5,\quad \sup_{0<\varepsilon<\varepsilon_0}\mathbb{E}\|J^6_{\varepsilon}\|^2_{W^{1,2}([0,T];\mathbb{H})}\leq L_6.
\end{eqnarray*}
Based on all the above estimates, we complete the proof.

\end{proof}

To prove (ii) in \textbf{Condition A}, we need to obtain the tightness of $\{(\tilde{u}^{\varepsilon},\tilde{\theta}^{\varepsilon})\}_{0<\varepsilon<\varepsilon_0}$ in $\mathcal{D}([0,T];D(A^{-\alpha}_1))\times C([0,T];H^{-2})$, for some $\alpha>1$.

Recall the following two lemmas related to the tightness of $\{(\tilde{u}^{\varepsilon},\tilde{\theta}^{\varepsilon})\}_{0<\varepsilon<\varepsilon_0}$. The proof can be found in \cite{J} and \cite{A-1}.
\begin{lemma}
Let $E$ be a separable Hilbert space with the inner product $(\cdot,\cdot)$. For an orthonormal basis $\{\xi_k\}_{k\in \mathbb{N}}$ in $E$, define the function $r^2_{N}:E\rightarrow \mathbb{R}^{+}$ by
\[
r^2_{N}(x)=\sum_{k\geq N+1}(x,\xi_k)^2,\quad N\in \mathbb{N}.
\]
Let $E_0$ be a total and closed under addition subset of $E$. Then a sequence $\{X_{\varepsilon}\}_{\varepsilon\in(0,1)}$ of stochastic process with trajectories in $\mathcal{D}([0,T],E)$ iff the following \textbf{Condition B } holds:
 \begin{description}
  \item[1.] $\{X_{\varepsilon}\}_{\varepsilon\in(0,1)}$ is $E_0-$weakly tight, that is, for every $h\in E_0$, $\{(X_{\varepsilon},h)\}_{\varepsilon\in(0,1)}$ is tight in $\mathcal{D}([0,T];\mathbb{R})$,
  \item[2.] For every $\eta>0$,
  \begin{eqnarray}\label{eq-65}
\lim_{N\rightarrow \infty}\lim_{\varepsilon\rightarrow 0}P\Big(r^2_{N}(X_{\varepsilon}(s)>\eta)\ for\ some\ s\in[0,T]\Big)=0.
\end{eqnarray}
\end{description}

\end{lemma}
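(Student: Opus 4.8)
The plan is to reduce the statement to Jakubowski's tightness criterion in the Skorokhod space of maps with values in a separable Hilbert space (see \cite{J}). That criterion asserts that $\{X_\varepsilon\}$ is tight in $\mathcal{D}([0,T];E)$ if and only if (a) the \emph{compact containment} condition holds, i.e.\ for every $\eta>0$ there is a compact $K_\eta\subset E$ with $\inf_\varepsilon P\big(X_\varepsilon(t)\in K_\eta\ \text{for all }t\in[0,T]\big)\ge 1-\eta$; and (b) for every $f$ in a family $\mathbb{F}$ of continuous real-valued maps on $E$ that separates points of $E$ and is closed under addition, $\{f(X_\varepsilon)\}$ is tight in $\mathcal{D}([0,T];\mathbb{R})$. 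I would take $\mathbb{F}=\{(\cdot,h):h\in E_0\}$; it is closed under addition since $E_0$ is, and it separates points because $E_0$ is total (if $(x-y,h)=0$ for every $h\in E_0$ then $x-y$ is orthogonal to a dense subspace, hence $x=y$). With this choice, condition (b) of Jakubowski's theorem is \emph{verbatim} item 1 of \textbf{Condition B}, so the whole lemma reduces to showing that, in the presence of item 1, the compact containment condition (a) is equivalent to item 2.

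For the implication ``\textbf{Condition B} $\Rightarrow$ tightness'' I would build the compact sets explicitly. Fix $\eta>0$. Using item 2, choose an increasing sequence $N_m\uparrow\infty$ such that $\limsup_{\varepsilon\to 0}P\big(\sup_{s\le T}r^2_{N_m}(X_\varepsilon(s))>1/m\big)\le\eta\,2^{-m-1}$ for each $m$ (the finitely many $\varepsilon$ bounded away from $0$ are individually tight and cause no trouble). Using item 1 together with the elementary fact that a tight family in $\mathcal{D}([0,T];\mathbb{R})$ is uniformly bounded in the supremum norm with probability close to $1$, I would obtain for each of the finitely many modes $k\le N_1$ a constant $a_k$ with $\sup_\varepsilon P\big(\sup_{s\le T}|(X_\varepsilon(s),\xi_k)|>a_k\big)\le\eta\,2^{-k-1}$; here one uses that $E_0$ is total and closed under addition to transfer the weak tightness of item 1 to the basis directions $\xi_k$ (passing to the linear span of $E_0$ and, via a small uniform perturbation, to all of $E$, so that the scalar processes $(X_\varepsilon,\xi_k)$ are themselves tight). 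Then I would set
\[
K_\eta:=\Big\{x\in E:\ |(x,\xi_k)|\le a_k\ \text{for }k\le N_1,\quad r^2_{N_m}(x)\le 1/m\ \text{for all }m\ge 1\Big\}.
\]
This set is closed (each defining functional is norm-continuous), bounded, since $|x|^2=\sum_{k\le N_1}(x,\xi_k)^2+r^2_{N_1}(x)\le\sum_{k\le N_1}a_k^2+1$, and has uniformly vanishing tails because $r^2_N(x)\le r^2_{N_m}(x)\le 1/m$ whenever $N\ge N_m$; hence $K_\eta$ is compact in $E$ by the standard Hilbert-space compactness criterion, while $P(X_\varepsilon(t)\in K_\eta\ \forall t)\ge 1-\eta$ for all small $\varepsilon$. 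Jakubowski's criterion then delivers tightness of $\{X_\varepsilon\}$ in $\mathcal{D}([0,T];E)$. (An equivalent route without invoking Jakubowski: the truncations $Q_NX_\varepsilon:=\sum_{k\le N}(X_\varepsilon,\xi_k)\xi_k$ are tight in $\mathcal{D}([0,T];\mathbb{R}^N)$ for each fixed $N$ by item 1, and item 2 says that $X_\varepsilon-Q_NX_\varepsilon$ is uniformly small in the supremum norm with probability near $1$ once $N$ is large — exactly the perturbative situation in which tightness transfers.)

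For the converse, ``tightness $\Rightarrow$ \textbf{Condition B}'', I would argue as follows: composition with the continuous linear functional $x\mapsto(x,h)$ induces a continuous map $\mathcal{D}([0,T];E)\to\mathcal{D}([0,T];\mathbb{R})$, so tightness of $\{X_\varepsilon\}$ forces tightness of $\{(X_\varepsilon,h)\}$ for every $h$, in particular for $h\in E_0$, which is item 1. For item 2, given $\eta>0$ I would pick a compact $\mathcal{K}\subset\mathcal{D}([0,T];E)$ with $\sup_\varepsilon P(X_\varepsilon\notin\mathcal{K})<\eta$; by the characterisation of compact subsets of the Skorokhod space, $\{f(t):f\in\mathcal{K},\ t\in[0,T]\}$ is relatively compact in $E$, hence $\sup_{f\in\mathcal{K}}\sup_{t\le T}r^2_N(f(t))\to 0$ as $N\to\infty$, which yields item 2. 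I expect the single genuinely delicate step to be the compact-containment construction in the forward direction — namely squeezing uniform-in-$\varepsilon$ control of finitely many fixed basis modes out of the scalar-projection tightness of item 1 when $\{\xi_k\}$ is not assumed to lie inside $E_0$, and dovetailing it with the tail estimate of item 2; the rest is routine topology of the Skorokhod space.
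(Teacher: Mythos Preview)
The paper does not actually prove this lemma: immediately before stating it, the authors write ``The proof can be found in \cite{J} and \cite{A-1}'' and give no argument of their own. So there is no ``paper's proof'' to compare against beyond the bare citation of Jakubowski.

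Your proposal is essentially the correct derivation and is exactly in the spirit of the cited reference \cite{J}: the lemma is nothing but Jakubowski's tightness criterion specialised to a separable Hilbert space, with the abstract compact-containment condition replaced by the concrete tail condition (item~2). Your choice $\mathbb{F}=\{(\cdot,h):h\in E_0\}$ is the intended one, and the equivalence between item~2 and compact containment is the only substantive point. The step you flag as delicate --- obtaining uniform control of the finitely many low modes $(X_\varepsilon,\xi_k)$, $k\le N_1$, from weak tightness along $E_0$ when the $\xi_k$ need not lie in $E_0$ --- is genuine but closes as you indicate: once item~2 gives $\sup_s r^2_{N_1}(X_\varepsilon(s))\le 1$ with high probability, one has $|X_\varepsilon(s)|^2\le \sum_{k\le N_1}(X_\varepsilon(s),\xi_k)^2+1$, and approximating each $\xi_k$ by $h_k\in\mathrm{span}(E_0)$ with $|\xi_k-h_k|$ small enough (depending only on $N_1$) lets one absorb the error term $|X_\varepsilon(s)|\,|\xi_k-h_k|$ into the left-hand side, reducing to sup-norm bounds on the finitely many scalar processes $(X_\varepsilon,h_k)$, which follow from their tightness in $\mathcal{D}([0,T];\mathbb{R})$. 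Your converse direction is routine and correct.
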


Consider a sequence $\{\tau_{\varepsilon},\delta_{\varepsilon}\}$ satisfying the following \textbf{Condition C}:
\begin{description}
  \item[(1)] For each $\varepsilon$, $\tau_{\varepsilon}$ ia a stopping time with respect to the natural $\sigma-$fildes, and takes only finitely many values.
  \item[(2)] The constant $\delta_{\varepsilon}\in[0,T]$ satisfying $\delta_{\varepsilon}\rightarrow 0$ as $\varepsilon\rightarrow 0$.
\end{description}
Let $\{Y_{\varepsilon}\}_{\varepsilon\in(0,1)}$ be a sequence of random elements of $\mathcal{D}([0,T];\mathbb{R})$. For $f\in \mathcal{D}([0,T];\mathbb{R})$,  let $J(f)$ denote the maximum of the jump $|f(t)-f(t-)|$. We introduce the following \textbf{Condition D } on $\{Y_{\varepsilon}\}$:
\begin{description}
  \item[{(I)}] For each sequence $\{\tau_{\varepsilon},\delta_{\varepsilon}\}$ satisfying \textbf{Condition C}, $Y_{\varepsilon}(\tau_\varepsilon+\delta_\varepsilon)-Y_{\varepsilon}(\tau_\varepsilon)\rightarrow 0$ in probability, as $\varepsilon\rightarrow 0$.
\end{description}
\begin{lemma}\label{lem-11}
Assume $\{Y_{\varepsilon}\}_{\varepsilon\in(0,1)}$ satisfies \textbf{Condition D}, and either $\{Y_{\varepsilon}(0)\}$ and $J(Y_{\varepsilon})$ are tight on the line or $\{Y_{\varepsilon}(t)\}$ is tight on the line for each $t\in[0,T]$, then $\{Y_{\varepsilon}\}$ is tight in $\mathcal{D}([0,T];\mathbb{R})$.
\end{lemma}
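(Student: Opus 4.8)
The plan is to reduce the statement to the classical Arzel\`{a}--Ascoli--type characterization of relative compactness in the Skorokhod $J_1$ topology on $\mathcal{D}([0,T];\mathbb{R})$: the family $\{Y_\varepsilon\}$ is tight as soon as (a) for each $t$ in a dense subset of $[0,T]$ the real random variables $\{Y_\varepsilon(t)\}_\varepsilon$ are tight, and (b) the Aldous modulus estimate holds, i.e. for every $\eta>0$
\[
\lim_{\delta\downarrow0}\ \limsup_{\varepsilon\downarrow0}\ \mathbb{P}\big(w'(Y_\varepsilon,\delta)\ge\eta\big)=0,
\]
where $w'(f,\delta)$ is the Skorokhod modulus of continuity, the infimum over partitions $0=t_0<\dots<t_m=T$ with $\min_i(t_i-t_{i-1})>\delta$ of $\max_i\sup_{s,t\in[t_{i-1},t_i)}|f(s)-f(t)|$. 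Part (a) is immediate from the hypotheses: if the marginals are assumed tight there is nothing to prove, while in the other case one first obtains (b) and then notes that on $\{w'(Y_\varepsilon,\delta)\le\eta\}$ one has $\sup_t|Y_\varepsilon(t)-Y_\varepsilon(0)|\le(\lceil T/\delta\rceil+1)(\eta+J(Y_\varepsilon))$, so that tightness of $\{Y_\varepsilon(0)\}$ and of $\{J(Y_\varepsilon)\}$ forces $\{\sup_t|Y_\varepsilon(t)|\}$, hence every marginal, to be tight. Thus everything reduces to deducing (b) from \textbf{Condition D}.

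The first step is to recast \textbf{Condition D}, phrased for stopping times of finite range and deterministic $\delta_\varepsilon\downarrow0$, as a uniform estimate over stopping times: for all $\eta,\epsilon>0$ there are $\delta,\varepsilon_0>0$ with
\[
\sup_{0<\varepsilon<\varepsilon_0}\ \sup_{\sigma}\ \mathbb{P}\Big(\sup_{0\le\theta\le\delta}\big|Y_\varepsilon((\sigma+\theta)\wedge T)-Y_\varepsilon(\sigma)\big|\ge\eta\Big)\le\epsilon,
\]
with $\sigma$ ranging over all stopping times $\le T$. One first gets the single-increment version (with $\sigma$ of finite range and $\theta\le\delta$ deterministic) by contradiction: a failure yields $\varepsilon_n\downarrow0$, $\delta_n\downarrow0$, $\sigma_n$, $\theta_n\le\delta_n$ along which $\mathbb{P}(|Y_{\varepsilon_n}((\sigma_n+\theta_n)\wedge T)-Y_{\varepsilon_n}(\sigma_n)|\ge\eta)$ stays bounded away from $0$, and then the pair $\{\sigma_n,\delta_n\}$ (suitably relabelled, with trivial choices for the remaining $\varepsilon$) satisfies \textbf{Condition C} while $Y_{\varepsilon_n}(\sigma_n+\delta_n)-Y_{\varepsilon_n}(\sigma_n)$ fails to converge to $0$ in probability, contradicting \textbf{Condition D}; right-continuity lets one pass from $\theta_n$ to $\delta_n$. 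One then upgrades to the supremum over $\theta$ by a first-exit argument: if $\rho\le\delta$ is the first time $|Y_\varepsilon(\sigma+\cdot)-Y_\varepsilon(\sigma)|$ reaches $2\eta$, then on that exit event one of the two increments over $[\sigma,(\sigma+\delta)\wedge T]$ and $[\sigma+\rho,(\sigma+\delta)\wedge T]$ is $\ge\eta$; after a dyadic approximation of $\rho$ from above to keep $\sigma+\rho$ of finite range, this gives the displayed bound at the cost of doubling the constants.

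The second step turns this into (b). Define the iterated $\eta$-oscillation stopping times $\tau^\varepsilon_0=0$, $\tau^\varepsilon_{k+1}=\inf\{t>\tau^\varepsilon_k:|Y_\varepsilon(t)-Y_\varepsilon(\tau^\varepsilon_k)|\ge\eta\}\wedge T$; since $Y_\varepsilon$ has left limits everywhere, only finitely many of them are $<T$. Applying the preceding estimate with $\sigma=\tau^\varepsilon_k$ on $\{\tau^\varepsilon_k<T\}$ gives $\mathbb{P}(\tau^\varepsilon_k<T,\ \tau^\varepsilon_{k+1}-\tau^\varepsilon_k\le\delta)\le\epsilon$; because all gaps among $\tau^\varepsilon_0,\dots,\tau^\varepsilon_{N-1}$ exceeding $\delta$ would force $\tau^\varepsilon_N>N\delta$, choosing $N=\lceil T/\delta\rceil$ and the tolerance in the first step small relative to $\epsilon/N$ (a short iteration settles the mild circularity between $\delta$ and that tolerance) shows that with probability $\ge1-\epsilon$ some $\tau^\varepsilon_{N'}=T$ with $N'\le N$ and all gaps $>\delta$; the partition $0=\tau^\varepsilon_0<\dots<\tau^\varepsilon_{N'}=T$ then has mesh $>\delta$ and oscillation $\le2\eta$ on each piece, so $w'(Y_\varepsilon,\delta)\le2\eta$ there. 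Letting $\varepsilon\downarrow0$, then $\delta\downarrow0$, then $\eta\downarrow0$ yields (b), and together with (a) this proves the lemma.

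The crux, I expect, is this last conversion from a bound that is merely pointwise in the stopping time $\sigma$ to a bound on the modulus $w'$: the bad event that $Y_\varepsilon$ has an $\eta$-oscillation over some window of length $\le\delta$ involves a continuum of windows, against which a naive union bound is useless, and the iterated first-exit times $\tau^\varepsilon_k$ are exactly the device that replaces that continuum by a finite, uniformly controllable family compatible with the $J_1$ (rather than uniform) modulus. A secondary subtlety is the right endpoint $t=T$: $w'$ deliberately ignores the final partition interval, so an isolated large jump near $T$ cannot be suppressed by any oscillation estimate, which is precisely why the hypothesis requires tightness of $\{J(Y_\varepsilon)\}$ (or of the marginals) on top of \textbf{Condition D}.
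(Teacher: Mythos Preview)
Your proposal is a reasonable sketch of the classical Aldous argument, but you should note that the paper does not prove this lemma at all: immediately before stating it, the authors write ``The proof can be found in \cite{J} and \cite{A-1}'' and give no argument of their own. Lemma~\ref{lem-11} is simply Aldous's tightness criterion quoted from the literature (Aldous, \emph{Ann.\ Probab.}\ 6 (1978), 335--340), so there is no ``paper's own proof'' to compare against.

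That said, what you have written is essentially the standard proof from Aldous's paper: the two-step passage from \textbf{Condition D} first to a uniform stopping-time increment bound (by contradiction and dyadic approximation), and then to control of the Skorokhod modulus $w'$ via iterated first-exit times $\tau^\varepsilon_k$, is exactly Aldous's device. Your handling of the marginal-tightness hypothesis (deriving $\sup_t|Y_\varepsilon(t)|$ tightness from $Y_\varepsilon(0)$, $J(Y_\varepsilon)$, and the modulus bound) is also the standard route. One small point: in the step where you bound $\sup_t|Y_\varepsilon(t)-Y_\varepsilon(0)|$ on $\{w'(Y_\varepsilon,\delta)\le\eta\}$, make sure you account correctly for the jumps at the partition points $t_i$ (the oscillation bound only controls each half-open interval $[t_{i-1},t_i)$), but your stated bound $(\lceil T/\delta\rceil+1)(\eta+J(Y_\varepsilon))$ already absorbs this. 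The remark about the ``mild circularity'' between $\delta$ and the tolerance in the first step is a genuine technical wrinkle in Aldous's original, and your iterative fix is the right idea.
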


Let $(\tilde{u}^{\varepsilon},\tilde{\theta}^{\varepsilon})$ be defined by (\ref{eq-63}). We have
\begin{lemma}\label{lem-13}
$\{(\tilde{u}^{\varepsilon},\tilde{\theta}^{\varepsilon})\}_{0<\varepsilon<\varepsilon_0}$ is tight in  $\mathcal{D}([0,T];D(A^{-\alpha}_1))\times C([0,T];H^{-2})$, for some $\alpha>1$.
\end{lemma}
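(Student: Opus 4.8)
The plan is to prove tightness of the two coordinates separately, using that $\tilde{\theta}^{\varepsilon}$ has continuous trajectories while $\tilde{u}^{\varepsilon}$ is only càdlàg. For $\tilde{\theta}^{\varepsilon}$, note that the equation (\ref{eq-66}) carries no noise, so $\tilde{\theta}^{\varepsilon}$ is a.s. continuous with $\frac{d}{dt}\tilde{\theta}^{\varepsilon}=-A_2\tilde{\theta}^{\varepsilon}-\tilde{B}(\tilde{u}^{\varepsilon},\tilde{\theta}^{\varepsilon})-f(\tilde{\theta}^{\varepsilon})$. By (\ref{eq-9}), Lemma \ref{lem-2}(2), (\ref{eq-13}) and the a priori bounds (\ref{eq-61})--(\ref{eq-62}) of Lemma \ref{lem-10}, each term on the right is bounded in $L^2(\Omega;L^2([0,T];(H^2)'))$ uniformly in $\varepsilon$, whence
\[
\|\tilde{\theta}^{\varepsilon}(t)-\tilde{\theta}^{\varepsilon}(s)\|_{(H^2)'}\leq |t-s|^{1/2}\Bigl(\int_0^T\bigl\|\tfrac{d}{dr}\tilde{\theta}^{\varepsilon}(r)\bigr\|_{(H^2)'}^2\,dr\Bigr)^{1/2}.
\]
Together with $\sup_{0<\varepsilon<\varepsilon_0}\mathbb{E}\sup_{t\in[0,T]}\|\tilde{\theta}^{\varepsilon}(t)\|_{H^1}^2<\infty$ and the compactness of the embedding $H^1\hookrightarrow H^{-2}=(H^2)'$, an Arzelà--Ascoli argument combined with the Markov inequality yields, for every $\rho>0$, a compact set $K_\rho\subset C([0,T];H^{-2})$ with $\mathbb{P}(\tilde{\theta}^{\varepsilon}\in K_\rho)\geq1-\rho$ for all $\varepsilon$, i.e. $\{\tilde{\theta}^{\varepsilon}\}$ is tight in $C([0,T];H^{-2})$.

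For $\tilde{u}^{\varepsilon}$ I would apply the criterion based on \textbf{Condition B} with $E=D(A^{-\alpha}_1)$, $\alpha>1$, whose orthonormal basis is $\{\lambda_k^{\alpha}\varrho_k\}_{k\geq1}$, and with the total, addition--closed set $E_0=\mathrm{Span}\{\varrho_k:k\geq1\}$. For part~2 of \textbf{Condition B}, since $(\tilde{u}^{\varepsilon}(s),\lambda_k^{\alpha}\varrho_k)_{D(A^{-\alpha}_1)}=(A_1^{-\alpha}\tilde{u}^{\varepsilon}(s),\varrho_k)_{\mathbb{H}}=\lambda_k^{-\alpha}(\tilde{u}^{\varepsilon}(s),\varrho_k)_{\mathbb{H}}$, one has
\[
r^2_N(\tilde{u}^{\varepsilon}(s))=\sum_{k\geq N+1}\lambda_k^{-2\alpha}(\tilde{u}^{\varepsilon}(s),\varrho_k)_{\mathbb{H}}^2\leq\lambda_{N+1}^{-2\alpha}\,|\tilde{u}^{\varepsilon}(s)|_{\mathbb{H}}^2,
\]
so that $\mathbb{P}(\sup_{s\in[0,T]}r^2_N(\tilde{u}^{\varepsilon}(s))>\eta)\leq\eta^{-1}\lambda_{N+1}^{-2\alpha}\,\mathbb{E}\sup_{s\in[0,T]}|\tilde{u}^{\varepsilon}(s)|_{\mathbb{H}}^2\leq\eta^{-1}\lambda_{N+1}^{-2\alpha}C(M)$ by Lemma \ref{lem-10}; since $\lambda_{N+1}\to\infty$ this gives (\ref{eq-65}).

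For part~1 of \textbf{Condition B} it suffices to show $\{(\tilde{u}^{\varepsilon},\varrho_k)_{\mathbb{H}}\}_{\varepsilon}$ is tight in $\mathcal{D}([0,T];\mathbb{R})$ for every $k$, which I would obtain from Aldous' Lemma \ref{lem-11}. From (\ref{eq-67}) write $(\tilde{u}^{\varepsilon}(t),\varrho_k)_{\mathbb{H}}=(u_0,\varrho_k)_{\mathbb{H}}+D^\varepsilon_k(t)+M^\varepsilon_k(t)$, where $D^\varepsilon_k(t)=-\int_0^t\langle A_1\tilde{u}^{\varepsilon}+B(\tilde{u}^{\varepsilon})+M(\tilde{\theta}^{\varepsilon}),\varrho_k\rangle\,ds+\int_0^t\int_{\mathbb{X}}(G(s,\tilde{u}^{\varepsilon}(s),v),\varrho_k)_{\mathbb{H}}(\varphi_\varepsilon(s,v)-1)\vartheta(dv)\,ds$ and $M^\varepsilon_k(t)=\varepsilon\int_0^t\int_{\mathbb{X}}(G(s,\tilde{u}^{\varepsilon}(s),v),\varrho_k)_{\mathbb{H}}\bigl(N^{\varepsilon^{-1}\varphi_\varepsilon}(ds\,dv)-\varepsilon^{-1}\varphi_\varepsilon(s,v)\vartheta(dv)\,ds\bigr)$ is a square--integrable martingale. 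I would verify \textbf{Condition D} directly for $(\tilde{u}^{\varepsilon},\varrho_k)_{\mathbb{H}}$: for $\{\tau_\varepsilon,\delta_\varepsilon\}$ as in \textbf{Condition C}, the increment of $D^\varepsilon_k$ over $[\tau_\varepsilon,\tau_\varepsilon+\delta_\varepsilon]$ tends to $0$ in probability because $\delta_\varepsilon\to0$ and, using (\ref{eq-7}), (\ref{eq-3}), (\ref{eq-4}), Cauchy--Schwarz, the estimate (\ref{eq-48}) of Lemma \ref{lem-6} and the bounds of Lemma \ref{lem-10}, $\lim_{\delta\to0}\sup_\varepsilon\mathbb{E}\sup_{|t-s|\leq\delta}|D^\varepsilon_k(t)-D^\varepsilon_k(s)|=0$; while, by the Itô isometry, Hypothesis \textbf{H0}(B), $\varphi_\varepsilon\in\mathcal{U}^M$, nonnegativity of $\varphi_\varepsilon$, and (\ref{eq-47}),
\[
\mathbb{E}\bigl|M^\varepsilon_k(\tau_\varepsilon+\delta_\varepsilon)-M^\varepsilon_k(\tau_\varepsilon)\bigr|^2=\varepsilon\,\mathbb{E}\int_{\tau_\varepsilon}^{\tau_\varepsilon+\delta_\varepsilon}\int_{\mathbb{X}}(G(s,\tilde{u}^{\varepsilon}(s),v),\varrho_k)_{\mathbb{H}}^2\,\varphi_\varepsilon(s,v)\,\vartheta(dv)\,ds\leq\varepsilon\,C(M)\longrightarrow0.
\]
Since in addition $\{(\tilde{u}^{\varepsilon}(t),\varrho_k)_{\mathbb{H}}\}_\varepsilon$ is bounded in $L^2$ for each $t$ (by Lemma \ref{lem-10}), hence tight on the line, Lemma \ref{lem-11} gives tightness of $\{(\tilde{u}^{\varepsilon},\varrho_k)_{\mathbb{H}}\}$ in $\mathcal{D}([0,T];\mathbb{R})$. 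Thus \textbf{Condition B} holds, $\{\tilde{u}^{\varepsilon}\}$ is tight in $\mathcal{D}([0,T];D(A^{-\alpha}_1))$, and combined with the first paragraph the proof is complete.

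The main obstacle is the verification of Aldous' condition for the compensated--Poisson term $M^\varepsilon_k$: one has to control the jump integral uniformly over the random intervals $[\tau_\varepsilon,\tau_\varepsilon+\delta_\varepsilon]$ while the driving intensity is the random control $\varepsilon^{-1}\varphi_\varepsilon$, of which only the aggregated bound $L_T(\varphi_\varepsilon)\leq M$ is available. It is precisely here that the $\varepsilon$--scaling of the noise and the uniform integrability estimates (\ref{eq-46})--(\ref{eq-48}) of Lemma \ref{lem-6}, together with the moment bounds of Lemma \ref{lem-10}, must be combined; the estimates for the bounded--variation part $D^\varepsilon_k$, on the other hand, are routine modulus--of--continuity computations using the linear, bilinear and forcing bounds already recorded.
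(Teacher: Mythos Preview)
Your proposal is correct and follows essentially the same route as the paper: tightness of $\tilde{\theta}^{\varepsilon}$ in $C([0,T];H^{-2})$ via the a priori bounds of Lemma~\ref{lem-10} (the paper invokes the $W^{\alpha,2}$ estimate (\ref{eq-60}) rather than your direct Arzel\`a--Ascoli argument, but these are equivalent), and tightness of $\tilde{u}^{\varepsilon}$ in $\mathcal{D}([0,T];D(A_1^{-\alpha}))$ via \textbf{Condition~B} with the $r_N^2$--estimate and Aldous' criterion applied to each component $(\tilde{u}^{\varepsilon},\varrho_k)$. The only cosmetic differences are your choice $E_0=\mathrm{Span}\{\varrho_k\}$ versus the paper's $E_0=D(A_1^{\alpha})$, and that you spell out the Poisson martingale bound $\mathbb{E}|M^\varepsilon_k(\tau_\varepsilon+\delta_\varepsilon)-M^\varepsilon_k(\tau_\varepsilon)|^2\le \varepsilon\,C(M)$ explicitly while the paper simply declares $\lim_{\varepsilon\to0}\mathbb{E}|(K^\varepsilon_5,h)_E|^2=0$.
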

\begin{proof}
With the help of (\ref{eq-60}) and $\tilde{\theta}^{\varepsilon}\in C([0,T];H^1)$, we deduce that $\tilde{\theta}^{\varepsilon}$ is tight in $C([0,T];H^{-2})$. Now, we prove $\{\tilde{u}^{\varepsilon}\}_{0<\varepsilon<\varepsilon_0}$ is tight in $\mathcal{D}([0,T];D(A^{-\alpha}_1))$.
Note that $\{\lambda^{\alpha}_i \varrho_i\}_{i\in \mathbb{N}}$ is a complete orthonormal system of $D(A^{-\alpha}_1)$. Since
\begin{eqnarray*}
\lim_{N\rightarrow \infty}\lim_{\varepsilon\rightarrow 0}\mathbb{E}\sup_{t\in[0,T]}r^2_{N}(\tilde{u}^{\varepsilon}(s))&=&\lim_{N\rightarrow \infty}\lim_{\varepsilon\rightarrow 0}\mathbb{E}\sup_{t\in[0,T]}\sum^{\infty}_{i=N+1}(\tilde{u}^{\varepsilon}(s),\lambda^{\alpha}_i \varrho_i)^2_{D(A^{-\alpha}_1)}\\
&=&\lim_{N\rightarrow \infty}\lim_{\varepsilon\rightarrow 0}\mathbb{E}\sup_{t\in[0,T]}\sum^{\infty}_{i=N+1}(A^{-\alpha}_1\tilde{u}^{\varepsilon}(s), \varrho_i)^2_{\mathbb{H}}\\
&=&\lim_{N\rightarrow \infty}\lim_{\varepsilon\rightarrow 0}\mathbb{E}\sup_{t\in[0,T]}\sum^{\infty}_{i=N+1}\frac{(\tilde{u}^{\varepsilon}(s), \varrho_i)^2_{\mathbb{H}}}{\lambda^{2\alpha}_i}\\
&\leq&\lim_{N\rightarrow \infty}\frac{\lim_{\varepsilon\rightarrow 0}\mathbb{E}\sup_{t\in[0,T]}|\tilde{u}^{\varepsilon}(t)|^2_{\mathbb{H}}}{\lambda^{2\alpha}_{N+1}}\\
&=&0,
\end{eqnarray*}
which implies (\ref{eq-65}) holds with $E=D(A^{-\alpha}_1)$.

Choosing $E_0=D(A^{\alpha}_1)$. We now prove $\{\tilde{u}^{\varepsilon}, 0<\varepsilon <\varepsilon_0\}$ is $E_0-$weakly tight. Let $h\in D(A^{\alpha}_1)$, and $\{\tau_{\varepsilon},\delta_{\varepsilon}\}$ satisfies
\textbf{Condition C}. It's easy to see $\{(\tilde{u}^{\varepsilon}(t), h)_{E}, 0<\varepsilon<\varepsilon_0\}$ is tight on the real line for each $t\in[0,T]$.

We now prove that $\{(\tilde{u}^{\varepsilon}(t), h)_{E}, 0<\varepsilon<\varepsilon_0\}$ satisfies \textbf{(D)}.
From (\ref{eq-67})-(\ref{eq-66}), we have
\begin{eqnarray*}
\tilde{u}^{\varepsilon}(\tau_{\varepsilon}+\delta_{\varepsilon})-\tilde{u}^{\varepsilon}(\tau_{\varepsilon})&=& -\int^{\tau_{\varepsilon}+\delta_{\varepsilon}}_{\tau_{\varepsilon}} A_1 \tilde{u}^{\varepsilon}(s)ds-\int^{\tau_{\varepsilon}+\delta_{\varepsilon}}_{\tau_{\varepsilon}} B(\tilde{u}^{\varepsilon}(s))ds\\
&& \ -\int^{\tau_{\varepsilon}+\delta_{\varepsilon}}_{\tau_{\varepsilon}} M(\tilde{\theta}^{\varepsilon}(s))ds+\int^{\tau_{\varepsilon}+\delta_{\varepsilon}}_{\tau_{\varepsilon}} \int_{\mathbb{X}}G(s,\tilde{u}^{\varepsilon}(s),v)(\varphi_{\varepsilon}(s,v)-1)\vartheta(dv)ds)\\
&&\ +\int^{\tau_{\varepsilon}+\delta_{\varepsilon}}_{\tau_{\varepsilon}} \int_{\mathbb{X}}\varepsilon G(s,\tilde{u}^{\varepsilon}(s),v)( N^{\varepsilon^{-1}\varphi_{\varepsilon}}(dsdv)-\varepsilon^{-1}\varphi_{\varepsilon}(s,v)\vartheta(dv)ds)\\
&:=& K^\varepsilon_1+K^\varepsilon_2+K^\varepsilon_3+K^\varepsilon_4+K^\varepsilon_5.
\end{eqnarray*}
Clearly, $\lim_{\varepsilon\rightarrow0}\mathbb{E}|(K^\varepsilon_5,h)_{E}|^2=0$.
For $K^\varepsilon_1$, using (\ref{eq-62}), we have
\begin{eqnarray*}
\lim_{\varepsilon\rightarrow0}\mathbb{E}|(K^\varepsilon_1,h)_{E}|\leq \|h\|_{D(A_1)}\lim_{\varepsilon\rightarrow0}\delta_\varepsilon \mathbb{E}[\sup_{t\in[0,T]}|\tilde{u}^{\varepsilon}(t)|_{\mathbb{H}}]=0.
\end{eqnarray*}
By (\ref{eq-3}), we get
\begin{eqnarray*}
\lim_{\varepsilon\rightarrow0}\mathbb{E}|(K^\varepsilon_2,h)_{E}|
&\leq&\|h\|_{\mathbb{V}}\lim_{\varepsilon\rightarrow0}\mathbb{E}\int^{\tau_{\varepsilon}+\delta_{\varepsilon}}_{\tau_{\varepsilon}}
\|B(\tilde{u}^{\varepsilon}(t))\|_{\mathbb{V}'}dt\\
&\leq&\|h\|_{\mathbb{V}}\lim_{\varepsilon\rightarrow0}\mathbb{E}\int^{\tau_{\varepsilon}+\delta_{\varepsilon}}_{\tau_{\varepsilon}}
|\tilde{u}^{\varepsilon}(t)|^2dt\\
&\leq&\|h\|_{\mathbb{V}}\lim_{\varepsilon\rightarrow0}\delta_\varepsilon \mathbb{E}\sup_{t\in[0,T]}
|\tilde{u}^{\varepsilon}(t)|^2\\
&=&0.
\end{eqnarray*}
With the help of (\ref{eq-4}) and (\ref{eq-62}), we deduce that
\begin{eqnarray*}
\lim_{\varepsilon\rightarrow0}\mathbb{E}|(K^\varepsilon_3,h)_{E}|
&\leq&\|h\|_{\mathbb{V}}\lim_{\varepsilon\rightarrow0}\mathbb{E}\int^{\tau_{\varepsilon}+\delta_{\varepsilon}}_{\tau_{\varepsilon}}
\|M(\tilde{\theta}^{\varepsilon}(t))\|_{\mathbb{V}'}dt\\
&\leq&\|h\|_{\mathbb{V}}\lim_{\varepsilon\rightarrow0}\mathbb{E}\int^{\tau_{\varepsilon}+\delta_{\varepsilon}}_{\tau_{\varepsilon}}
\|\tilde{\theta}^{\varepsilon}(t)\||\Delta\tilde{\theta}^{\varepsilon}(t)|dt\\
&\leq&\|h\|_{\mathbb{V}}\lim_{\varepsilon\rightarrow0}\mathbb{E}[\sup_{t\in[0,T]}\|\tilde{\theta}^{\varepsilon}(t)\|\int^{\tau_{\varepsilon}+\delta_{\varepsilon}}_{\tau_{\varepsilon}}
|\Delta\tilde{\theta}^{\varepsilon}(t)|dt]\\
&\leq&\|h\|_{\mathbb{V}}\lim_{\varepsilon\rightarrow0}\delta^{\frac{1}{2}}_{\varepsilon}\Big(\mathbb{E}\sup_{t\in[0,T]}\|\tilde{\theta}^{\varepsilon}(t)\|^2\Big)^{\frac{1}{2}}
\Big(\mathbb{E}\int^{\tau_{\varepsilon}+\delta_{\varepsilon}}_{\tau_{\varepsilon}}
|\Delta\tilde{\theta}^{\varepsilon}(t)|^2dt\Big)^{\frac{1}{2}}\\
&=&0.
\end{eqnarray*}
For $K^\varepsilon_4$, referring to (4.82) in \cite{Z-Z}, we get
\begin{eqnarray*}
\lim_{\varepsilon\rightarrow0}\mathbb{E}|(K^\varepsilon_4,h)_{E}|=0.
\end{eqnarray*}
Hence, we conclude the desired result.

\end{proof}

Fix the solution $(\tilde{u}^{\varepsilon},\tilde{\theta}^{\varepsilon})$ of (\ref{eq-67})-(\ref{eq-66}), consider the following equation:
\begin{eqnarray}\label{eq-68}
d\tilde{\xi}^{\varepsilon}(t)=-A_1\tilde{\xi}^{\varepsilon}(t)dt
+\varepsilon\int_{\mathbb{X}}G(t,\tilde{u}^{\varepsilon}(t-),v)
(N^{\varepsilon^{-1}\varphi_\varepsilon}(dtdv)-\varepsilon^{-1}\varphi_\varepsilon(t,v)\vartheta(dv)dt),
\end{eqnarray}
with $\tilde{\xi}^{\varepsilon}(0)=0$. Referring to Proposition 3.1 in \cite{R-Z}, there exists a unique solution $\tilde{\xi}^{\varepsilon}(t), t\geq 0$
to (\ref{eq-68}). Moreover,
\begin{eqnarray}\label{eq-69}
\tilde{\xi}^{\varepsilon}\in \mathcal{D}([0,T];\mathbb{H})\cap L^2([0,T];\mathbb{V}),
\end{eqnarray}
and there exists constant $C$ and $\tilde{\varepsilon}_0<\varepsilon_0$ such that for any $0<\varepsilon<\tilde{\varepsilon}_0$,
\begin{eqnarray}\label{eq-70}
\mathbb{E}\sup_{t\in[0,T]}|\tilde{\xi}^{\varepsilon}|^2_{\mathbb{H}}+\mathbb{E}\int^T_0\|\tilde{\xi}^{\varepsilon}\|^2_{\mathbb{V}}dt
\leq \sqrt{\varepsilon}C.
\end{eqnarray}

Now, we are ready to prove (ii) in \textbf{Condition A}. Recall $\mathcal{G}^{\varepsilon}(\varepsilon N^{\varepsilon^{-1}\varphi_{\varepsilon}})=(\tilde{u}^{\varepsilon},\tilde{\theta}^{\varepsilon})$ is defined by (\ref{eq-63}).

\begin{thm}\label{thm-3}
Fix $M\in \mathbb{N}$, and let $\{\varphi_\varepsilon,\varepsilon<\varepsilon_0\}\subset \mathcal{U}^M, \varphi\in\mathcal{U}^M$ be such that $\varphi_\varepsilon$ converges in distribution to $\varphi$ as $\varepsilon\rightarrow 0$. Then
\begin{eqnarray*}
\mathcal{G}^{\varepsilon}(\varepsilon N^{\varepsilon^{-1}\varphi_\varepsilon})\ converges\ in\ distribution\ to\  \mathcal{G}^{0}(\vartheta^\varphi_T),
\end{eqnarray*}
in $\mathcal{D}([0,T];\mathbb{H})\times C([0,T];H^1)$.
\end{thm}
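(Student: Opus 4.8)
The plan is to combine a tightness/Skorokhod argument — identifying every limit point of $(\tilde{u}^{\varepsilon},\tilde{\theta}^{\varepsilon})$ as the unique solution of a skeleton system — with a pathwise comparison of $(\tilde{u}^{\varepsilon},\tilde{\theta}^{\varepsilon})$ against the skeleton solution driven by $\varphi_{\varepsilon}$, which upgrades the convergence from the weak spaces of Lemma \ref{lem-13} to $\mathcal{D}([0,T];\mathbb{H})\times C([0,T];H^1)$.

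First I would set up tightness. By Lemma \ref{lem-13}, $\{(\tilde{u}^{\varepsilon},\tilde{\theta}^{\varepsilon})\}_{0<\varepsilon<\varepsilon_0}$ is tight in $\mathcal{D}([0,T];D(A^{-\alpha}_1))\times C([0,T];H^{-2})$; since $S^M$ is compact, $\{\varphi_{\varepsilon}\}$ (viewed through the identification $g\leftrightarrow\vartheta^g_T$) is tight and, by assumption, converges in distribution to $\varphi$; and by (\ref{eq-69})--(\ref{eq-70}), $\{\tilde{\xi}^{\varepsilon}\}$ is tight in $\mathcal{D}([0,T];\mathbb{H})$ with $0$ as its only limit law. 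Adjoining the finitely many real-valued norms that appear in Lemma \ref{lem-10} and in (\ref{eq-50})--(\ref{eq-51}), the joint family is tight, so the Skorokhod representation theorem yields, on a new probability space, random elements $(\hat{u}^{\varepsilon},\hat{\theta}^{\varepsilon},\hat{\xi}^{\varepsilon},\hat{\varphi}_{\varepsilon})$ with the same joint laws that, along a subsequence, converge almost surely to some $(\bar{u},\bar{\theta},0,\bar{\varphi})$ with $\bar{\varphi}\stackrel{d}{=}\varphi$. Since the a priori bounds of Lemma \ref{lem-10} are preserved under the new measure, Lemma \ref{lem-5} upgrades this to: $\hat{u}^{\varepsilon}\to\bar{u}$ strongly in $L^2([0,T];\mathbb{H})$, weakly in $L^2([0,T];\mathbb{V})$ and weak-$\star$ in $L^{\infty}([0,T];\mathbb{H})$; $\hat{\theta}^{\varepsilon}\to\bar{\theta}$ strongly in $L^2([0,T];H^1)$, weakly in $L^2([0,T];H^2)$ and weak-$\star$ in $L^{\infty}([0,T];H^1)$; $\hat{\varphi}_{\varepsilon}\to\bar{\varphi}$ in $S^M$; and $\hat{\xi}^{\varepsilon}\to 0$ in $\mathcal{D}([0,T];\mathbb{H})\cap L^2([0,T];\mathbb{V})$.

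Next I would pass to the limit in the equations. Setting $\hat{v}^{\varepsilon}:=\hat{u}^{\varepsilon}-\hat{\xi}^{\varepsilon}$ and using that $\tilde{\xi}^{\varepsilon}$ solves (\ref{eq-68}), $\hat{v}^{\varepsilon}$ satisfies the integral identity (\ref{eq-21}) with the controlled drift built from $\hat{\varphi}_{\varepsilon}$ and with the nonlinear terms evaluated at $(\hat{u}^{\varepsilon},\hat{\theta}^{\varepsilon})$. The linear term converges weakly in $L^2([0,T];\mathbb{V}')$; the terms $B(\hat{u}^{\varepsilon}),M(\hat{\theta}^{\varepsilon}),\tilde{B}(\hat{u}^{\varepsilon},\hat{\theta}^{\varepsilon}),f(\hat{\theta}^{\varepsilon})$ converge to their natural limits exactly by the estimates used in the existence part of Theorem \ref{thm-2}, which rely only on the strong $L^2$-in-space convergences together with the weak $L^2([0,T];\mathbb{V})$ and $L^2([0,T];H^2)$ convergences; and $\int_0^{\cdot}\!\int_{\mathbb{X}}G(s,\hat{u}^{\varepsilon}(s),v)(\hat{\varphi}_{\varepsilon}(s,v)-1)\vartheta(dv)ds$ converges to $\int_0^{\cdot}\!\int_{\mathbb{X}}G(s,\bar{u}(s),v)(\bar{\varphi}(s,v)-1)\vartheta(dv)ds$ by the argument of Proposition \ref{prp-1}, i.e. the combination of Lemmas \ref{lem-6}--\ref{lem-8}, \textbf{Hypothesis H1}, the Lipschitz and growth bounds on $G$, and the strong convergence of $\hat{u}^{\varepsilon}$. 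As $\hat{\xi}^{\varepsilon}\to 0$, the pair $(\bar{u},\bar{\theta})$ solves the skeleton system (\ref{eq-20})--(\ref{eq-22}) with $g=\bar{\varphi}\in S^M$; by the uniqueness in Theorem \ref{thm-2}, $(\bar{u},\bar{\theta})=\mathcal{G}^0(\vartheta^{\bar{\varphi}}_T)$, which carries the law of $\mathcal{G}^0(\vartheta^{\varphi}_T)$ because $\bar{\varphi}\stackrel{d}{=}\varphi$.

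Finally I would upgrade the topology. On the new space let $(u^{\hat{\varphi}_{\varepsilon}},\theta^{\hat{\varphi}_{\varepsilon}}):=\mathcal{G}^0(\vartheta^{\hat{\varphi}_{\varepsilon}}_T)$ be the skeleton solution driven by $\hat{\varphi}_{\varepsilon}\in S^M$; by Proposition \ref{prp-1} these converge almost surely to $(\bar{u},\bar{\theta})$ in $C([0,T];\mathbb{H})\times C([0,T];H^1)$. The pair $\eta^{\varepsilon}:=\hat{u}^{\varepsilon}-u^{\hat{\varphi}_{\varepsilon}}-\hat{\xi}^{\varepsilon}$, $\rho^{\varepsilon}:=\hat{\theta}^{\varepsilon}-\theta^{\hat{\varphi}_{\varepsilon}}$ then solves, pathwise, a system of the form handled in the ``Uniqueness'' part of Theorem \ref{thm-2}, augmented by forcing terms generated by $\hat{\xi}^{\varepsilon}$. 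Running that Gronwall estimate with the random weight $\Upsilon$ — whose reciprocal is bounded in probability uniformly in $\varepsilon$ thanks to the moment bounds of Lemma \ref{lem-10} and (\ref{eq-23})--(\ref{eq-24}), after a stopping-time localization removed at the end as in Theorem \ref{thm-2} — and invoking (\ref{eq-70}) to make the $\hat{\xi}^{\varepsilon}$-contribution vanish in probability, one gets $\sup_{t\in[0,T]}(|\hat{u}^{\varepsilon}(t)-u^{\hat{\varphi}_{\varepsilon}}(t)|^2+|\hat{\theta}^{\varepsilon}(t)-\theta^{\hat{\varphi}_{\varepsilon}}(t)|^2+\|\hat{\theta}^{\varepsilon}(t)-\theta^{\hat{\varphi}_{\varepsilon}}(t)\|^2)\to 0$ in probability. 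Combined with Proposition \ref{prp-1} this gives $\hat{u}^{\varepsilon}\to\bar{u}$ in $\mathcal{D}([0,T];\mathbb{H})$ and $\hat{\theta}^{\varepsilon}\to\bar{\theta}$ in $C([0,T];H^1)$ in probability, hence in distribution; since the laws are unchanged and every subsequence produces the same limit law, $\mathcal{G}^{\varepsilon}(\varepsilon N^{\varepsilon^{-1}\varphi_{\varepsilon}})$ converges in distribution to $\mathcal{G}^0(\vartheta^{\varphi}_T)$ in $\mathcal{D}([0,T];\mathbb{H})\times C([0,T];H^1)$. I expect the two delicate points to be the limit passage in the controlled drift — where the exponential integrability of \textbf{Hypothesis H1} and Lemmas \ref{lem-6}--\ref{lem-8} are indispensable because $\hat{\varphi}_{\varepsilon}-1$ only converges weakly — and the uniform-in-$\varepsilon$ control of the weight $\Upsilon$ in the comparison step, which forces the stopping-time localization.
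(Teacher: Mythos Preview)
Your proposal is correct and follows essentially the same strategy as the paper: tightness from Lemmas \ref{lem-10} and \ref{lem-13} together with (\ref{eq-69})--(\ref{eq-70}), Skorokhod representation, subtraction of the auxiliary process $\tilde{\xi}^{\varepsilon}$ to kill the martingale term, identification of the limit as the unique skeleton solution, and a Gronwall argument in the style of the uniqueness part of Theorem \ref{thm-2} to upgrade to $\mathcal{D}([0,T];\mathbb{H})\times C([0,T];H^1)$.

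The one organizational difference is in the final comparison. The paper works directly with $p^{\varepsilon}:=\tilde{u}^{\varepsilon}_1-\tilde{\xi}^{\varepsilon}_1$ and compares it to the limit skeleton $(\hat{u},\hat{\theta})=\mathcal{G}^0(\vartheta^{\varphi^1}_T)$, so the controlled-drift difference it must handle is $\int G(\cdot,\tilde{u}^{\varepsilon}_1,\cdot)(\varphi^1_{\varepsilon}-1)-\int G(\cdot,\hat{u},\cdot)(\varphi^1-1)$, which mixes the Lipschitz estimate on $G$ with the weak convergence $\varphi^1_{\varepsilon}\to\varphi^1$. You instead insert the intermediate skeleton $(u^{\hat{\varphi}_{\varepsilon}},\theta^{\hat{\varphi}_{\varepsilon}})$ driven by the \emph{same} control $\hat{\varphi}_{\varepsilon}$: in your comparison the drift difference is purely $\int[G(\cdot,\hat{u}^{\varepsilon},\cdot)-G(\cdot,u^{\hat{\varphi}_{\varepsilon}},\cdot)](\hat{\varphi}_{\varepsilon}-1)$, controlled by $|G|_{1,\mathbb{H}}$ and (\ref{eq-46}) exactly as in the uniqueness proof, and the passage $\hat{\varphi}_{\varepsilon}\to\bar{\varphi}$ is outsourced to Proposition \ref{prp-1}. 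Your decomposition is a bit cleaner in that it decouples the two difficulties; the paper's is more direct but has to treat them simultaneously. One minor point: the stopping-time localization you mention is not actually used in the uniqueness argument of Theorem \ref{thm-2} --- the exponential weight $\Upsilon$ already absorbs the unbounded coefficients --- so you can drop that caveat.
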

\begin{proof}
Note that $\mathcal{G}^{\varepsilon}(\varepsilon N^{\varepsilon^{-1}\varphi_{\varepsilon}})=(\tilde{u}^{\varepsilon},\tilde{\theta}^{\varepsilon})$. From Lemma \ref{lem-10} and Lemma \ref{lem-13}, we know that
\begin{description}
  \item[1.] $\tilde{u}^{\varepsilon}$ is tight in $\mathcal{D}([0,T];D(A^{-\alpha}_1))\cap L^2([0,T];\mathbb{H})$, \ for\ $\alpha>1$,
  \item[2.] $\tilde{\theta}^{\varepsilon}$ is tight in $C([0,T];(H^2)')\cap L^2([0,T];H^1)$,
  \item[3.]$\lim_{\varepsilon\rightarrow 0}\mathbb{E}\Big[\sup_{t\in[0,T]}|\tilde{\xi}^{\varepsilon}(t)|^2_{\mathbb{H}}+\int^T_0\|\tilde{\xi}^{\varepsilon}(t)\|^2_{\mathbb{V}}dt\Big]
=0,$
\end{description}
where $\tilde{\xi}^{\varepsilon}$ is defined in (\ref{eq-68}). Let $\Xi=\Big(\mathcal{D}([0,T];D(A^{-\alpha}_1))\cap L^2([0,T];\mathbb{H})\Big)\times \Big(C([0,T];(H^2)')\cap L^2([0,T];H^1)\Big)$. Set
\[
\Pi=(\Xi, \mathcal{U}^M, \mathcal{D}([0,T];\mathbb{H})\cap L^2([0,T];\mathbb{V})).
\]
Let $((\tilde{u},\tilde{\theta}), \varphi, 0)$ be any limit of the tight family $\{((\tilde{u}^{\varepsilon},\tilde{\theta}^{\varepsilon}), \varphi_\varepsilon, \tilde{\xi}^{\varepsilon}),\varepsilon\in (0,\tilde{\varepsilon}_0)\}$. We will show that $(\tilde{u},\tilde{\theta})$
has the same law as $\mathcal{G}^{0}(\vartheta^\varphi_T)$ and $(\tilde{u}^\varepsilon,\tilde{\theta}^\varepsilon)$ converges in distribution to $(\tilde{u},\tilde{\theta})$ in $\mathcal{D}([0,T];\mathbb{H})\times C([0,T];H^1)$.

By the Skorokhod representative theorem, there exists a stochastic basis $(\Omega^1, \mathcal{F}^1, \{\mathcal{F}^1_t\}_{t\in [0,T]}, \mathbb{P}^1)$ and, on this basis, $\Pi-$valued random variables
$((\tilde{u}_1,\tilde{\theta}_1),\varphi^1,0), ((\tilde{u}^\varepsilon_1,\tilde{\theta}^\varepsilon_1), \varphi^1_\varepsilon,\tilde{\xi}^{\varepsilon}_1)$ such that $((\tilde{u}^\varepsilon_1,\tilde{\theta}^\varepsilon_1), \varphi^1_\varepsilon,\tilde{\xi}^{\varepsilon}_1)$ (resp. $((\tilde{u}_1,\tilde{\theta}_1),\varphi^1,0)$) has the same law as $((\tilde{u}^{\varepsilon},\tilde{\theta}^{\varepsilon}), \varphi_\varepsilon, \tilde{\xi}^{\varepsilon})$ (resp. $((\tilde{u},\tilde{\theta}), \varphi, 0)$), and $((\tilde{u}^\varepsilon_1,\tilde{\theta}^\varepsilon_1), \varphi^1_\varepsilon,\tilde{\xi}^{\varepsilon}_1)\rightarrow ((\tilde{u}_1,\tilde{\theta}_1),\varphi^1,0)$ in $\Pi$, $\mathbb{P}^1-$a.s.

From the equations satisfied by $((\tilde{u}^{\varepsilon},\tilde{\theta}^{\varepsilon}), \varphi_\varepsilon, \tilde{\xi}^{\varepsilon})$, we see that $((\tilde{u}^\varepsilon_1,\tilde{\theta}^\varepsilon_1), \varphi^1_\varepsilon,\tilde{\xi}^{\varepsilon}_1)$ satisfies the following integral equations:
\begin{eqnarray*}
\tilde{u}^\varepsilon_1(t)-\tilde{\xi}^{\varepsilon}_1(t)
&=&u_0-\int^t_0A_1(\tilde{u}^\varepsilon_1(s)-\tilde{\xi}^{\varepsilon}_1(s))ds
-\int^t_0B(\tilde{u}^\varepsilon_1(s))ds\\
&&\ -\int^t_0M(\tilde{u}^\varepsilon_1(s),\tilde{\theta}^\varepsilon_1(s))ds
+\int^t_0\int_{\mathbb{X}}G(s,\tilde{u}^\varepsilon_1(s),v)(\varphi^1_\varepsilon(s,v)-1)\vartheta(dv)ds,\\
\tilde{\theta}^\varepsilon_1(t)&=&\theta_0-\int^t_0A_2 \tilde{\theta}^\varepsilon_1(s)ds-\int^t_0 \tilde{B}(\tilde{u}^\varepsilon_1(s),\tilde{\theta}^\varepsilon_1(s))ds-\int^t_0 f(\tilde{\theta}^\varepsilon_1(s))ds.
\end{eqnarray*}
Define $\Sigma=(C([0,T];\mathbb{H})\cap L^2([0,T];\mathbb{V}))\times (C([0,T];H^1)\cap L^2([0,T];H^2))$, we have
\begin{eqnarray*}
\mathbb{P}^1\Big((\tilde{u}^\varepsilon_1-\tilde{\xi}^{\varepsilon}_1,\tilde{\theta}^\varepsilon_1)\in \Sigma\Big)=\bar{\mathbb{P}}\Big((\tilde{u}^\varepsilon-\tilde{\xi}^{\varepsilon},\tilde{\theta}^\varepsilon)\in \Sigma\Big)=1.
\end{eqnarray*}
Let $\Omega^1_0$ be the subset of $\Omega^1$ such that $((\tilde{u}^\varepsilon_1,\tilde{\theta}^\varepsilon_1), \varphi^1_\varepsilon,\tilde{\xi}^{\varepsilon}_1)\rightarrow ((\tilde{u}_1,\tilde{\theta}_1),\varphi^1,0)$ in $\Pi$, then $\mathbb{P}^1(\Omega^1_0)=1$. Now, we have to show that, for any fixed $\omega^1\in\Omega^1_0$,
\begin{eqnarray}\label{eq-71}
\sup_{t\in[0,T]}|\tilde{u}^\varepsilon_1(\omega^1,t)-\tilde{u}_1(\omega^1,t)|^2_{\mathbb{H}}\rightarrow 0\quad and\
\sup_{t\in[0,T]}\|\tilde{\theta}^\varepsilon_1(\omega^1,t)-\tilde{\theta}_1(\omega^1,t)\|^2_{H^1}\rightarrow 0, \ as \ \varepsilon\rightarrow 0.
\end{eqnarray}
Set $p^\varepsilon(t)=\tilde{u}^\varepsilon_1(t)-\tilde{\xi}^{\varepsilon}_1(t)$ and $q^\varepsilon(t)=\tilde{\theta}^\varepsilon_1(t)$. Then, $(p^\varepsilon(\omega^1,t),q^\varepsilon(\omega^1,t))$
satisfies
\begin{eqnarray*}
p^\varepsilon(t)&=&u_0-\int^t_0A_1p^\varepsilon(s)ds
-\int^t_0B(p^\varepsilon(s)+\tilde{\xi}^{\varepsilon}_1(s))ds\\
&&\ -\int^t_0M(p^\varepsilon(s)+\tilde{\xi}^{\varepsilon}_1(s),q^\varepsilon(s))ds
+\int^t_0\int_{\mathbb{X}}G(s,p^\varepsilon(s)+\tilde{\xi}^{\varepsilon}_1(s),v)(\varphi^1_\varepsilon(s,v)-1)\vartheta(dv)ds,\\
q^\varepsilon(t)&=&\theta_0-\int^t_0A_2 q^\varepsilon(s)ds-\int^t_0 \tilde{B}(p^\varepsilon(s)+\tilde{\xi}^{\varepsilon}_1(s),q^\varepsilon(s))ds-\int^t_0 f(q^\varepsilon(s))ds.
\end{eqnarray*}

Since
\[
\lim_{\varepsilon\rightarrow 0}[\sup_{t\in[0,T]}|\tilde{\xi}^{\varepsilon}(\omega^1,t)|^2_{\mathbb{H}}+\int^T_0\|\tilde{\xi}^{\varepsilon}(\omega^1,t)\|^2_{\mathbb{V}}dt]
=0,
\]
we have
\begin{eqnarray}\notag
&&\lim_{\varepsilon\rightarrow 0}\sup_{t\in[0,T]}|\tilde{u}^{\varepsilon}_1(\omega^1,t)-\hat{u}(\omega^1,t)|^2_{\mathbb{H}}+\lim_{\varepsilon\rightarrow 0}\sup_{t\in[0,T]}\|\tilde{\theta}^\varepsilon_1(t)-\hat{\theta}(\omega^1,t)\|^2_{H^1}\\ \notag
&\leq& \lim_{\varepsilon\rightarrow 0}\sup_{t\in[0,T]}[|p^{\varepsilon}(\omega^1,t)-\hat{u}(\omega^1,t)|^2_{\mathbb{H}}+|\tilde{\xi}^{\varepsilon}_1(\omega^1,t)|^2_{\mathbb{H}}]
 +\lim_{\varepsilon\rightarrow 0}\sup_{t\in[0,T]}\|q^{\varepsilon}(\omega^1,t)-\hat{\theta}(\omega^1,t)\|^2_{H^1}\\ \label{eq-73}
&=&\lim_{\varepsilon\rightarrow 0}\sup_{t\in[0,T]}|p^{\varepsilon}(\omega^1,t)-\hat{u}(\omega^1,t)|^2_{\mathbb{H}}+\lim_{\varepsilon\rightarrow 0}\sup_{t\in[0,T]}\|q^{\varepsilon}(\omega^1,t)-\hat{\theta}(\omega^1,t)\|^2_{H^1}.
\end{eqnarray}
By the similar method as Theorem \ref{thm-2}, we can obtain that
\begin{eqnarray}\label{eq-72}
\lim_{\varepsilon\rightarrow 0}\sup_{t\in[0,T]}|p^{\varepsilon}(\omega^1,t)-\hat{u}(\omega^1,t)|^2_{\mathbb{H}}=0, \quad {\rm{and}} \quad \lim_{\varepsilon\rightarrow 0}\sup_{t\in[0,T]}\|q^{\varepsilon}(\omega^1,t)-\hat{\theta}(\omega^1,t)\|^2_{H^1}=0,
\end{eqnarray}
where
\begin{eqnarray*}
\hat{u}(t)&=&u_0-\int^t_0A_1\hat{u}(s)ds-\int^t_0B(\hat{u}(s))ds-\int^t_0M(\hat{\theta}(s))ds\\
&&\ +\int^t_0\int_{\mathbb{X}}G(s,\hat{u}(s),v)(\varphi^1(s,v)-1)\vartheta(dv)ds,\\
\hat{\theta}(t)&=&\theta_0-\int^t_0A_2\hat{\theta}(s)ds-\int^t_0\tilde{B}(\hat{u}(s),\hat{\theta}(s))ds-\int^t_0f(\hat{\theta}(s))ds.
\end{eqnarray*}
Hence, combining (\ref{eq-73}) and (\ref{eq-72}), we obtain
 \begin{eqnarray}\label{eq-72-1}
\lim_{\varepsilon\rightarrow 0}\sup_{t\in[0,T]}|\tilde{u}^{\varepsilon}_1(\omega^1,t)-\hat{u}(\omega^1,t)|^2_{\mathbb{H}}=0,
\quad {\rm{and}} \quad \lim_{\varepsilon\rightarrow 0}\sup_{t\in[0,T]}\|\tilde{\theta}^\varepsilon_1(t)-\hat{\theta}(\omega^1,t)\|^2_{H^1}=0,
\end{eqnarray}
which imply that
 $(\tilde{u}_1, \tilde{\theta}_1)=(\hat{u},\hat{\theta})=\mathcal{G}^0(\vartheta^{\varphi^1})$, and $(\tilde{u}, \tilde{\theta})$ has the same law as $\mathcal{G}^0(\vartheta^{\varphi})$.
Since $(\tilde{u}^{\varepsilon}, \tilde{\theta}^{\varepsilon})=(\tilde{u}^{\varepsilon}_1, \tilde{\theta}^{\varepsilon}_1)$ in law, we deduce from (\ref{eq-72-1}) that $(\tilde{u}^{\varepsilon}, \tilde{\theta}^{\varepsilon})$ converges to $\mathcal{G}^0(\vartheta^{\varphi})$. We complete the proof.
\end{proof}


\vskip 0.2cm
\noindent{\bf  Acknowledgements}\
This work is supported by National Natural Science Foundation of China (Grant No. 11401057),  Natural Science Foundation Project of CQ  (Grant No. cstc2016jcyjA0326),
Fundamental Research Funds for the Central Universities(Grant No. 106112015CDJXY100005) and China Scholarship Council (Grant No.:201506055003).

\def\refname{ References}

\end{document}